\title[Commensurated subgroups]{Commensurated subgroups in tree almost automorphism groups}
\author{Adrien Le Boudec}
\thanks{The first named author is a F.R.S.-FNRS Postdoctoral Researcher and was partially supported by the ERC grant \#278469}
\author{Phillip Wesolek}
\thanks{The second named author is supported by the ERC grant \#278469}
\address{Universit\'{e} catholique de Louvain,
	Institut de Recherche en Math\'{e}matiques et Physique (IRMP),
	Chemin du Cyclotron 2, box L7.01.02,
	1348 Louvain-la-Neuve, Belgium}
\email{adrien.leboudec@uclouvain.be}
\address{ Department of Mathematical Sciences,
	Binghamton University,
	PO Box 6000,
	Binghamton, New York 13902}
\email{pwesolek@binghamton.edu}
\newtheorem{thm}{Theorem}[section]
\newtheorem{prop}[thm]{Proposition}
\newtheorem{lem}[thm]{Lemma}
\newtheorem{cor}[thm]{Corollary}
\theoremstyle{definition}
\newtheorem{defn}[thm]{Definition}
\newtheorem{ex}[thm]{Example}
\newtheorem{quest}[thm]{Question}
\newtheorem{rmk}[thm]{Remark}
\newtheorem*{claim*}{Claim}
\newtheorem*{ack}{Acknowledgments}
\newcommand{\Zb}{\mathbb{Z}}
\newcommand{\Nb}{\mathbb{N}}
\newcommand{\tdlc}{t.d.l.c.\@\xspace}
\newcommand{\mc}[1]{\mathcal{#1}}
\newcommand{\ms}[1]{\mathscr{#1}} 
\newcommand*{\treedk}{\mathcal{T}_{d,k}}
\newcommand*{\bord}{\partial \mathcal{T}_{d,k}}
\newcommand*{\aaut}{\mathrm{AAut}(\mathcal{T}_{d,k})}
\newcommand*{\aautbis}{\mathrm{AAut}(\mathcal{T}_{d,2})}
\newcommand*{\Daaut}{\mathrm{AAut}_{D}(\mathcal{T}_{d,k})}
\newcommand*{\aut}{\mathrm{Aut}(\mathcal{T}_{d,k})}
\newcommand*{\Sd}{\mathrm{Sym}(d)}
\newcommand{\diam}{\mathrm{diam}}
\newcommand{\sleq}{\leqslant}
\newcommand{\Aut}{\mathrm{Aut}}
\newcommand{\Sym}{\mathrm{Sym}}
\newcommand{\normal}{\trianglelefteq}
\newcommand{\Stab}{\mathrm{Stab}}
\newcommand{\rest}{\upharpoonright}
\newcommand{\Sc}{\mc{S}}
\newcommand{\cgrp}[1]{\ol{\langle #1 \rangle}}
\newcommand{\grp}[1]{\langle #1 \rangle}
\newcommand{\ol}[1]{\overline{#1}}
\newcommand{\Trd}{\mathcal{T}_{d,k}}
\begin{document}

\begin{abstract}
We prove that the almost automorphism groups $\aaut$ admit exactly three commensurability classes of closed commensurated subgroups. Our proof utilizes an independently interesting characterization of subgroups of $\aaut$ which contain only periodic elements in terms of the dynamics of the action on the boundary of the tree. 

Our results further cover several interesting finitely generated subgroups of the almost automorphism groups, including the Thompson groups $F,T$, and $V$. We show in particular that Thompson's group $T$ has no commensurated subgroups other than the finite subgroups and the entire group.  As a consequence, we derive several rigidity results for the possible embeddings of these groups into locally compact groups.
\end{abstract}

\maketitle

\setcounter{tocdepth}{1}

\tableofcontents

\addtocontents{toc}{\protect\setcounter{tocdepth}{1}}

\section{Introduction}

The group of almost automorphisms of a locally finite regular tree was first considred by Y. Neretin in \cite{Ner84}. It is a totally disconnected locally compact (\tdlc hereafter) second countable group, which is abstractly simple by work of C. Kapoudjian \cite{Ka99}. 

We briefly recall the definition of this group and its topology, following the approach developed by P.-E. Caprace and T. de Medts in \cite{CdM11}. For $d,k\geq 2$, let $\Trd$ denote the rooted tree in which the root $r$ has valency $k$ and all other vertices have valency $d+1$. The boundary of $\Trd$, denoted $\bord$, is endowed with the visual metric, written $\mathrm{dist}$, corresponding to the root $r$. An \textbf{almost automorphism} of $\Trd$ is a homeomorphism $g \in \mathrm{Homeo}(\bord)$ such that there exists a partition of $\bord$ into finitely many balls $\bord = B_1 \sqcup \ldots \sqcup B_n$ for which $g_{\rest B_i}: B_i \rightarrow g(B_i)$ is a homothety for every $i$. That is to say, for each $i$, there is a constant $\lambda_i$ such that $\mathrm{dist}(g(x),g(y))= \lambda_i \mathrm{dist}(x,y)$ for all $x,y\in B_i$.

The group of automorphisms $\Aut(\Trd)$ of the tree $\Trd$ is a profinite group, and the topology on $\aaut$ is such that $\Aut(\Trd)$ is a compact open subgroup of $\aaut$.

\subsection{Translations, elliptic elements, and locally elliptic subgroups}

An automorphism $g$ of an \textit{unrooted} tree is called elliptic if $g$ stabilizes a vertex or an edge, and it is called hyperbolic if there exists a bi-infinite geodesic line along which $g$ acts by translation. It is a classical result that any automorphism of a tree is either elliptic or hyperbolic. 

The first goal of this paper is to extend the elliptic/hyperbolic typology of tree automorphisms to the almost automorphism setting.

\begin{defn} 
An element $g \in \aaut$ is \textbf{elliptic} if there exists a set-wise $g$-invariant partition $\bord = B_1 \sqcup \ldots \sqcup B_n$ such that $g_{\rest B_i}: B_i \rightarrow g(B_i)$ is a homothety for every $i$.
\end{defn}

\begin{defn}
	An element $g \in \aaut$ is a \textbf{translation} if there exists a ball $B$ of $\bord$ and some $n \in \mathbb{Z}$ such that $g^n_{\rest B}:B\rightarrow g^n(B)$ is a homothety and $g^n(B)\subsetneq B$.
\end{defn}

In the case $k=2$, the group $\aautbis$ contains $\mathrm{Aut}(T_{d+1})$ as an open subgroup, where $T_{d+1}$ is the \textit{unrooted} regular tree with valance $d+1$. Moreover, our notions of elliptic elements and translations in $\aautbis$ agree with the notions of elliptic elements and hyperbolic elements in $\mathrm{Aut}(T_{d+1})$.

In a locally compact group $G$, an element $g\in G$ is said to be \textbf{periodic} if $\cgrp{g}$ is compact. Elliptic elements are easily seen to be periodic in $\aaut$, and translations are never periodic. We show the converses to these statements hold.

\begin{prop}[See Proposition~\ref{prop:char_1_elliptic}] 
	For $g\in \aaut$, the following are equivalent:
	\begin{enumerate}[(1)]
		\item $g$ is elliptic;
		\item $g$ is periodic;
		\item $g$ is not a translation.
	\end{enumerate}
\end{prop}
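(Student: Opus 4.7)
The plan is to establish the cyclic implications $(1) \Rightarrow (2) \Rightarrow (3) \Rightarrow (1)$. The first two are reasonably direct; the last is where the substance lies.

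For $(1) \Rightarrow (2)$, suppose $g$ is elliptic with invariant ball partition $\partial \Trd = B_1 \sqcup \cdots \sqcup B_n$. Then $g$ permutes the $B_i$, and for $m$ the order of the induced permutation, $g^m$ stabilizes each $B_i$ setwise. The restriction $g^m_{\rest B_i} \colon B_i \to B_i$ is a surjective homothety, and such a map is forced to have ratio $1$, so it is an isometry of $B_i$ coming from an honest rooted tree automorphism of the subtree $S_{B_i}$ hanging below the leaf corresponding to $B_i$. Hence $g^m$ lies in the compact (profinite) group $\prod_i \Aut(S_{B_i}) \sleq \aaut$, so $\cgrp{g^m}$ is compact, and $\cgrp{g}$, being a finite union of cosets of $\cgrp{g^m}$, is compact as well.

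For $(2) \Rightarrow (3)$, suppose for contradiction $g$ is both periodic and a translation. Since $\cgrp{g}$ is compact and $\Aut(\Trd)$ is a compact open subgroup of $\aaut$, the intersection $\cgrp{g} \cap \Aut(\Trd)$ is open and hence of finite index in $\cgrp{g}$; so $g^N \in \Aut(\Trd)$ for some $N \geq 1$. The translation hypothesis provides a ball $B$ and $n \in \Zb$ with $g^n_{\rest B}$ a homothety and $g^n(B) \subsetneq B$, whence iterating yields $g^{nN}(B) \subsetneq B$. But $g^{nN} = (g^N)^n$ lies in $\Aut(\Trd)$, so acts as an isometry of $\partial \Trd$ with respect to the visual metric, which is incompatible with strictly contracting a ball.

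For $(3) \Rightarrow (1)$, I argue the contrapositive: if $g$ is not elliptic, then $g$ is a translation. Fix a representation of $g$ as a triple $(T, T', \sigma)$, where $T, T'$ are finite complete rooted subtrees of $\Trd$ with equal numbers of leaves, and $\sigma$ is a bijection between their leaf sets realizing the action of $g$ by canonical homotheties between the corresponding balls. The key observation is that \emph{if one can refine this representation to make $T = T'$, then $\sigma$ is a permutation of the leaves of $T$, so the partition of $\bord$ by these leaves is $g$-invariant with $g$ acting by homotheties — giving an elliptic structure}. I would iteratively replace $(T, T', \sigma)$ by refinements, enlarging $T$ toward $T'$ and symmetrically $T'$ toward $T$, while tracking how blowing up a leaf $v$ forces blowing up $\sigma(v)$ (and vice versa under $\sigma^{-1}$).

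The main obstacle is to show that this refinement procedure terminates whenever $g$ is not a translation, and that non-termination actually produces a translation rather than some weaker obstruction. I expect that non-termination forces some leaf $v$ whose iterated $\sigma$-images descend indefinitely below another leaf $w$, yielding a strictly nested sequence of balls $B \supsetneq g^n(B) \supsetneq g^{2n}(B) \supsetneq \cdots$ on which $g^n$ restricts to a homothety, exhibiting $g$ as a translation. An alternative, possibly cleaner route would be to prove $(3) \Rightarrow (2)$ directly by bounding the orbit of $g$ on the space of ball partitions of $\bord$ (showing compactness of $\cgrp{g}$ without producing an invariant partition first), and then to obtain $(2) \Rightarrow (1)$ from the general fact that any compact subgroup of $\aaut$ preserves a finite partition of $\bord$ into balls on which it acts by homotheties.
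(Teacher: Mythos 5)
Your implications $(1)\Rightarrow(2)$ and $(2)\Rightarrow(3)$ are correct and essentially the paper's own arguments: an invariant admissible partition forces a power of $g$ into $\Aut(\Trd)$ (a surjective homothety of a ball onto itself must have ratio $1$), and a periodic translation is impossible because some power of $g^{n}$ must return the ball $B$ to itself while $g^{in}(B)\subsetneq B$ for all $i\geq 1$. These are the easy directions.

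The direction $(3)\Rightarrow(1)$ is where the entire content of the proposition lies, and your treatment of it is a sketch, not a proof. You describe a refinement procedure on tree-pair representations $(T,T',\sigma)$ and then explicitly defer the decisive point: that the procedure terminates whenever $g$ is not a translation, equivalently that non-termination produces a genuine translation --- a \emph{single} power $g^{n}$ acting as a homothety on a \emph{single} ball $B$ with $g^{n}(B)\subsetneq B$ --- rather than some weaker obstruction. That is exactly the content of the paper's Theorem~\ref{thm:main_ell_sgrps} in the case $K=\{1\}$: one iterates $\mc{P}_{m+1}:=g^{-1}.\Delta(\mc{P}_m)$, proves by induction that every part of every $\mc{P}_m$ has the form $\gamma(Q)$ with $\gamma\in\grp{g}$ acting as a homothety on some $Q\in\mc{P}$, extracts from non-stabilization a strictly decreasing chain of such parts, and pigeonholes on the finitely many $Q\in\mc{P}$ to find $\xi(Q)\supsetneq\delta(Q)$, so that $\gamma:=\xi^{-1}\delta$ (a power of $g$, since the group is cyclic) is a homothety on $Q$ with $\gamma(Q)\subsetneq Q$. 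One must also verify that stabilization of the sequence really yields $g.\mc{Q}=\mc{Q}$ rather than a mere refinement relation; this is the $\Omega_{\mc{Q},g}=\emptyset$ computation. Your proposed alternative route does not avoid the difficulty: bounding the orbit of $g$ on ball partitions to get compactness of $\cgrp{g}$, and the ``general fact'' that a compact subgroup of $\aaut$ preserves an admissible partition, are both themselves consequences of the same technical theorem. So the proposal identifies the right strategy but leaves the one substantive step unproven.
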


A subgroup of $\aaut$ is called \textbf{elliptic} if every element is elliptic. We characterize these subgroups in terms of the dynamics of the action on $\bord$; this result again extends the classical characterization of elliptic groups acting on trees \cite[Proposition 26]{Serre-trees}. Recall for a locally compact group $G$, a subgroup $H\leq G$ is \textbf{locally elliptic} if $\cgrp{F}$ is compact for all finite $F\subseteq H$. 

\begin{thm}[See Corollary~\ref{cor:elliptic_sgrp}] \label{thm-intro-tot-ell}
	For $H\leq \aaut$, the following are equivalent:
	\begin{enumerate}
		\item $H$ contains only elliptic elements;
		\item $H$ contains no translations;
		\item For every compactly generated subgroup $\Gamma \leq H$, there exists a set-wise $\Gamma$-invariant partition $\bord = B_1 \sqcup \ldots \sqcup B_n$ into finitely many balls such that $\gamma_{\rest B_i}: B_i \rightarrow \gamma(B_i)$ is a homothety for every $\gamma \in \Gamma$ and $1\leq i\leq n$;
		\item $H$ is locally elliptic. 
	\end{enumerate}
\end{thm}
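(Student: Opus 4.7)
The equivalence $(1)\Leftrightarrow(2)$ is immediate from the preceding proposition applied element by element. For $(3)\Rightarrow(1)$, one applies (3) to the cyclic compactly generated subgroup $\langle g\rangle$ to see $g$ is elliptic by definition.

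The next step I would take is to establish a \textbf{compactness criterion} for partition stabilizers: the stabilizer in $\aaut$ of a finite partition $\bord=B_1\sqcup\cdots\sqcup B_n$ into balls, restricted to subgroups acting by homotheties on each block, is compact. Indeed, such a subgroup embeds into $\Sym(n)\ltimes\prod_{i=1}^{n}\Aut(T_i)$ where $T_i$ is the subtree below $B_i$, and each factor is compact. With this in hand, $(3)\Rightarrow(4)$ is clear: apply (3) to $\langle F\rangle$ for finite $F\subseteq H$ to conclude $\overline{\langle F\rangle}$ is compact. The implication $(4)\Rightarrow(1)$ is trivial, since $\overline{\langle g\rangle}$ compact gives $g$ periodic, hence elliptic by the preceding proposition.

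The substantive work is $(2)\Rightarrow(3)$. My plan is a two-stage reduction. \emph{Stage one} handles the finitely generated case by induction on the number of generators. Starting from elliptic generators $g_1,\dots,g_n$, suppose inductively that $\Gamma_{n-1}=\langle g_1,\ldots,g_{n-1}\rangle$ preserves a partition $\mc P$ into balls with homothety action. I then need a common $\{\Gamma_{n-1},g_n\}$-invariant partition. The natural candidate is the common refinement of $\mc P$ with the $g_n$-invariant partition $\mc P_n$, together with all $\Gamma$-translates of the resulting blocks. The key point will be to show this orbit of blocks is \emph{finite}: any infinite strictly descending chain of balls $B\supsetneq \gamma(B)\supsetneq\gamma^2(B)\supsetneq\cdots$, or more generally the failure of finiteness, would produce (after extracting a returning subsequence using the combinatorics of the tree of balls) an element $\gamma\in\Gamma$ and a ball $B$ with $\gamma^k(B)\subsetneq B$ for some $k$, which is exactly a translation, contradicting~(2). \emph{Stage two} passes from compactly to finitely generated: a compact generating set $C$ of $\Gamma$ is covered by finitely many cosets of the compact open subgroup $\Aut(\Trd)$, so $C\subseteq F\cdot\Aut(\Trd)$ for a finite set $F\subseteq\Gamma$. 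Applying stage one to $\langle F\rangle$ yields an invariant partition $\mc P$; its pointwise stabilizer is a compact open subgroup $U$ of $\aaut$, and since $\Gamma\cap U$ is closed in $U$ and $C\subseteq F\cdot(\Gamma\cap U)$, the invariance of $\mc P$ extends to all of $\Gamma$.

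The main obstacle I anticipate is the finiteness of the orbit of blocks under $\Gamma$ in stage one. Translating ``no translations in $\Gamma$'' into a combinatorial statement ruling out descending chains of balls in a $\Gamma$-orbit is the heart of the matter, and is essentially the almost-automorphism analog of the classical Serre-type argument that a group of elliptic tree automorphisms whose pairwise products are elliptic has a global fixed vertex. I expect the correct formalization to use the natural partial order on balls (by inclusion) and a combinatorial ``no-descending-orbit'' lemma, proved by contradiction with the translation criterion from Definition~2.
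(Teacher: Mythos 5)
Your handling of $(1)\Leftrightarrow(2)$, $(3)\Rightarrow(1)$, $(3)\Rightarrow(4)$ and $(4)\Rightarrow(1)$ is fine, and your stage one for $(2)\Rightarrow(3)$ is essentially the paper's argument: iteratively refine a partition by translates of its blocks and show the process terminates by pigeonholing a strictly descending chain of balls onto a single original block $Q$, so that one obtains $\xi(Q)\supsetneq\delta(Q)$ and hence a translation $\xi^{-1}\delta$. (You should still verify that every element of $\Gamma$, not just the generators, acts by homotheties on the blocks you adjoin; the paper tracks this carefully through the recursion defining the refinements.)

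The genuine gap is in stage two. Compactness of $C$ gives $C\subseteq F\cdot\Aut(\Trd)$, hence $C\subseteq F\cdot\bigl(\Gamma\cap\Aut(\Trd)\bigr)$ with $F\subseteq\Gamma$ finite --- but \emph{not} $C\subseteq F\cdot(\Gamma\cap U)$ where $U$ stabilizes the partition $\mc{P}$ produced for $\langle F\rangle$. An element of $\Gamma\cap\Aut(\Trd)$ is an isometry of $\bord$, so it permutes the balls of each fixed diameter, but it has no reason to preserve a non-spherical partition $\mc{P}$; the invariance of $\mc{P}$ therefore does not ``extend to all of $\Gamma$.'' Since $K:=\Gamma\cap\Aut(\Trd)$ is in general infinite, you cannot absorb it into the finite generating set either, so the compactly generated case is genuinely stronger than the finitely generated one. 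The paper resolves this by building the relatively compact subgroup $K$ into the inductive step itself (Theorem~\ref{thm:main_ell_sgrps}): given $K$ relatively compact and one new element $g$, one refines by \emph{all} $K$-translates of blocks --- a finite collection because ball stabilizers are open and $K$ is relatively compact (Lemma~\ref{lem:refine_K}) --- and shows the refinement process for $\langle K,g\rangle$ terminates by the same no-translation pigeonhole. As $\langle K,g_1\rangle$ then lies in a compact partition stabilizer, it serves as the relatively compact input for the next generator and the induction closes. Your proposal needs this extra mechanism; as written it only proves $(2)\Rightarrow(3)$ for finitely generated $\Gamma$.
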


\subsection{Commensurated subgroups}

Two subgroups $K,H \leq G$ of a group $G$ are \textbf{commensurable} if $K \cap H$ has finite index in both $K$ and $H$. A subgroup $H\leq G$ is \textbf{commensurated} in $G$ if $gHg^{-1}$ is commensurable with $H$ for every $g\in G$. The \textbf{commensurability class} of a subgroup $H$ is the set of subgroups $K$ such that $H$ and $K$ are commensurable.

Normal subgroups are trivial examples of commensurated subgroups, but commensurated subgroups need not be normal in general. Group actions on graphs are a natural source of commensurated subgroups: if $G$ acts on a connected locally finite graph, then vertex stabilizers must be commensurated in $G$. More generally, the point stabilizers of any subdegree finite permutation group are commensurated.

The study of commensurated subgroups is of particular interest when the ambient group has few normal subgroups, e.g.\ simple or just infinite groups. A striking example of such an exploration is the work of Y. Shalom and G. Willis, who classified commensurated subgroups for a large family of arithmetic groups \cite{SW_13}. As an additional example, the second named author classified commensurated subgroups for finitely generated just infinite branch groups \cite{W_B_16}.

In the present work, we classify the commensurated subgroups of certain groups of tree automorphisms and tree almost automorphisms. For simplicity, the following result is not stated here in full generality; see Theorem \ref{thm-commens-tree} for the full statement.

\begin{thm} \label{thm-aut(T)-intro}
Suppose that $\mc{T}$ is a biregular tree and $\Aut(\mc{T})^+$ is the subgroup of $\Aut(\mc{T})$ acting on $\mc{T}$ with two orbits of vertices. If $\Lambda$ is a commensurated subgroup of $\Aut(\mc{T})^+$, then either $\Lambda$ is finite, $\ol{\Lambda}$ is compact open, or $\Lambda = \Aut(\mc{T})^+$.
\end{thm}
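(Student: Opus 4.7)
The plan is to split on whether $\Lambda$ contains a hyperbolic (translation) element for its action on $\mc{T}$, paralleling the strategy for $\aaut$ in Theorem~\ref{thm-intro-tot-ell}. Write $G = \Aut(\mc{T})^+$. First suppose $\Lambda$ contains no hyperbolic element. By the classical tree-automorphism analogue of Theorem~\ref{thm-intro-tot-ell} (essentially Serre~\cite{Serre-trees}, Proposition~26), $\Lambda$ is locally elliptic, and so fixes either a vertex, an edge, or a unique end $\xi \in \partial \mc{T}$. In the first two cases $\ol{\Lambda}$ lies inside a compact vertex or edge stabilizer. In the end-fixing case, pick $g \in G$ with $g\xi \neq \xi$ (possible since $G$ contains hyperbolic elements); then $\Lambda \cap g\Lambda g^{-1}$ fixes both $\xi$ and $g\xi$, hence pointwise fixes the bi-infinite geodesic they span, and is therefore contained in a compact subgroup of $G$. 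Commensurability gives $[\Lambda : \Lambda \cap g\Lambda g^{-1}] < \infty$, so $\ol{\Lambda}$ is still compact.

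Now suppose $K := \ol{\Lambda}$ is compact, say $K \leq U_v := \Stab(v)$, and show $K$ is finite or open. If $K$ is infinite, we want to produce some $n$ with $U_v^{(n)} \leq K$, where $U_v^{(n)}$ denotes the pointwise fixator of the ball of radius $n$ about $v$. For each $g \in G$ the commensurability $[K : K \cap gKg^{-1}] < \infty$ holds, and when $g$ sends $v$ to a nearby vertex $w$, the intersection $K \cap gKg^{-1}$ lies in the joint pointwise fixator of $v$ and $w$. Exploiting the wreath-product structure of $U_v$ together with the fact that $G$ acts transitively on each orbit of vertices (and even highly transitively on spheres about $v$), a pigeonhole/rigidity argument forces $K$ to contain a full congruence subgroup $U_v^{(n)}$, hence $K$ is open.

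Next suppose $\Lambda$ contains a hyperbolic element $\gamma$. For any $g \in G$, commensurability of $g \langle \gamma \rangle g^{-1}$ with its intersection with $\Lambda$ yields $n_g \geq 1$ with $g \gamma^{n_g} g^{-1} \in \Lambda$; so $\Lambda$ contains a hyperbolic element whose axis is any prescribed bi-infinite geodesic in $\mc{T}$. A commutator / ping-pong argument with two such hyperbolic elements whose axes are suitably arranged produces elements of $\Lambda$ fixing arbitrarily large balls pointwise. Hence $\Lambda$ contains some $U_v^{(n)}$ and is open; finally, since $G$ is abstractly simple (after Tits) and $\Lambda$ is open and commensurated, a density-and-conjugation argument using the already-obtained translations of all axes forces $\Lambda = G$.

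The main technical obstacle is in the hyperbolic case: the exponents $n_g$ arising from commensurability are not uniformly controlled, so one must convert long translations $g \gamma^{n_g} g^{-1}$ into short elliptic elements of $\Lambda$ by carefully choosing pairs whose axes intersect or run close together, so that their commutators fix large balls. A secondary difficulty lies in the compact case: passing from the external condition that $K$ is commensurated in $G$ to the internal assertion that $K \supseteq U_v^{(n)}$ requires using commensurability not only by elements of $U_v$ but by elements of $G$ that move $v$ across every available vertex orbit.
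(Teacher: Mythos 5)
Your case division (elliptic versus hyperbolic, then compact-versus-open in the elliptic case) is the same as the paper's, and your treatment of the purely elliptic case is essentially correct: the paper's Proposition~\ref{prop-comm-Aut(T)} likewise rules out the end-fixing alternative using commensuration, and the vertex/edge case gives relative compactness. However, the two steps you yourself flag as ``obstacles'' are exactly where the mathematical content lies, and neither sketch constitutes a proof.

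First, the implication ``$K=\ol{\Lambda}$ compact, infinite and commensurated $\Rightarrow$ $K$ open'' is not something a quick ``pigeonhole/rigidity argument'' delivers; the paper imports it wholesale as Theorem~\ref{thm-CRW2-finite-open} (Caprace--Reid--Willis), which applies because $\Aut(\mc{T})^+$ is compactly generated and abstractly simple by Tits' theorem. Saying that the wreath-product structure of $U_v$ ``forces $K$ to contain a full congruence subgroup'' asserts the conclusion without an argument; if you do not want to cite the result, you must actually prove it, and that is a genuinely nontrivial theorem. Second, your endgame in the hyperbolic case is broken as stated: ``$\Lambda$ open and commensurated, hence $\Lambda=G$ by simplicity'' cannot be right, since compact open subgroups are open, commensurated, and proper; and the ``density-and-conjugation'' step fails because $\Lambda$ is only commensurated, not normal, so containing one ball fixator $U_v^{(n)}$ does not give you its conjugates. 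The paper's actual mechanism avoids openness entirely: for any half-tree $T'$, minimality and general type of the $\Lambda$-action give a hyperbolic $\gamma\in\Lambda$ with axis in $T'$, the identity $[g,h]=[[g,\gamma^k],h]$ holds for all $g,h$ fixing $T'$ pointwise, and Lemma~\ref{lem-comm-double} lets one choose $k$ so that $[[g,\gamma^k],h]\in\Lambda$. Thus $\Lambda$ contains the derived subgroup of \emph{every} half-tree fixator; the subgroup these generate is normal and nontrivial (by \cite[Theorem 2.2]{Will07} the group is not locally abelian), so abstract simplicity of $\Aut(\mc{T})^+$ forces $\Lambda=\Aut(\mc{T})^+$. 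You should replace your ping-pong/openness sketch with an argument of this double-commutator type, or otherwise produce a nontrivial normal subgroup inside $\Lambda$.
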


While the results of Section \ref{sec-action-tree} for groups acting on trees follow from fairly classical arguments, the case of the group $\aaut$ is more complicated and requires new techniques and intermediate results, including Theorem \ref{thm-intro-tot-ell}. In addition, we use the following result of Caprace--Reid--Willis: \textit{If $G$ is a non-discrete compactly generated topologically simple \tdlc group which is abstractly simple, then every infinite compact commensurated subgroup is open.} See Theorem \ref{thm-CRW2-finite-open} below.

\begin{thm}[See Theorem \ref{thm-trichotomy}]\label{thm:aaut-intro}
If $\Lambda\leq \aaut$ is commensurated, then either $\Lambda$ is finite, $\ol{\Lambda}$ is compact and open, or $\Lambda=\aaut$. In particular, $\aaut$ admits exactly three commensurability classes of closed commensurated subgroups.
\end{thm}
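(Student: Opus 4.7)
Write $G = \aaut$. The plan is to dichotomise according to whether $\Lambda$ contains a translation, invoking our characterisation of locally elliptic subgroups (Theorem~\ref{thm-intro-tot-ell}) in the negative case and exploiting the geometry of translations in the positive case. Both branches combine with the Caprace--Reid--Willis rigidity stated before the theorem, applicable since $\aaut$ is a non-discrete, compactly generated, topologically simple, abstractly simple \tdlc group.

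\emph{Case A ($\Lambda$ contains no translation).} By Theorem~\ref{thm-intro-tot-ell}, $\Lambda$ is locally elliptic. I would first verify that $\ol{\Lambda}$ is still locally elliptic: Proposition~\ref{prop:char_1_elliptic} identifies elliptic elements with the non-translations, and a short argument shows that the set of translations is open in $G$ (a slight perturbation of a map strictly contracting some ball still strictly contracts a slightly adjusted ball), so $\ol{\Lambda}$ contains only elliptic elements and Theorem~\ref{thm-intro-tot-ell} applies to it as well. The key step is then to show that a commensurated closed locally elliptic subgroup of $G$ is compact. By Theorem~\ref{thm-intro-tot-ell}(3), each compactly generated subgroup of $\ol{\Lambda}$ sits inside the setwise stabiliser of a finite partition of $\bord$ into balls, and such a stabiliser is itself a compact subgroup of $G$. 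Commensuration, which forces $g\ol{\Lambda}g^{-1}\cap\ol{\Lambda}$ to have finite index in $\ol{\Lambda}$ for every $g\in G$, constrains these partitions to be comparable as we exhaust $\ol{\Lambda}$ by its compactly generated subgroups, pushing all the associated compact subgroups into a single one. Once $\ol{\Lambda}$ is compact, the Caprace--Reid--Willis theorem yields that $\ol{\Lambda}$ is open whenever infinite; otherwise $\Lambda$ itself is finite.

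\emph{Case B ($\Lambda$ contains a translation $t$).} Here the goal is $\Lambda = G$. After replacing $t$ by a suitable power, fix a ball $B\subseteq\bord$ with $t(B)\subsetneq B$. The contraction of $t$ together with the commensurability of $t^m\Lambda t^{-m}$ with $\Lambda$ for every $m$ lets one produce, inside $\Lambda$, many elements whose nontrivial action is concentrated on arbitrarily small sub-balls of $B$. Combining this with conjugation by commensurator elements (i.e.\ all of $G$) that move $B$ around $\bord$, one obtains inside $\Lambda$ a subgroup containing, up to finite index, the rigid stabiliser of some ball. Since such rigid ball stabilisers generate $\aaut$ and $\aaut$ is abstractly simple, the commensurated condition then upgrades this to $\Lambda = G$. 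This is in spirit analogous to the tree case (Theorem~\ref{thm-aut(T)-intro}), but requires identifying the correct rigid stabilisers inside the almost-automorphism setting.

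The principal obstacle is the compactness step in Case~A: closed locally elliptic subgroups of a \tdlc group need not be compact in general (e.g.\ $\Qp$), so one must genuinely exploit commensuration and the rigidity coming from the action on $\bord$. Case~B is then largely a matter of carefully producing the required rigid ball stabilisers.
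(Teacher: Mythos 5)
Your overall dichotomy (translation present vs.\ absent) is exactly the paper's, and your Case~B is essentially the paper's argument: given a translation $\gamma\in\Lambda$, the paper uses Lemma~\ref{lem-comm-double} and the double-commutator identity $[[x,\gamma^{kn}],y]=[x,y]$ for $x,y$ supported in a suitable ball $B_1$ to show $\Lambda\supseteq[G_{B_1},G_{B_1}]$ (Proposition~\ref{prop-tran-branch}), then uses a simple subgroup of $G_{B_1}$ with no proper finite-index subgroups to pass from ``contained up to finite index'' to genuine containment of a normal subgroup, whence $\Lambda=G$ by simplicity. Note that your phrasing ``up to finite index, the rigid stabiliser'' glosses over a real point: commensuration only gives you finite-index containments, and you need the no-proper-finite-index-subgroup trick to upgrade these to actual containment of a normal closure.

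The genuine gap is the compactness step in Case~A, which you correctly identify as the principal obstacle but then dispatch with a heuristic (``commensuration constrains these partitions to be comparable\ldots pushing all the associated compact subgroups into a single one'') that is not a proof and does not obviously lead to one. The difficulty is that $\ol{\Lambda}$ is an increasing union of compact open subgroups $\Stab(\mc{Q}_n)$ for finer and finer partitions, and such a union can perfectly well be non-compact; commensuration by a \emph{single} $g$ only gives $|\ol\Lambda:\ol\Lambda\cap g\ol\Lambda g^{-1}|<\infty$, which by itself does not force the partitions to stabilize. The paper's actual route occupies all of Section~\ref{sec-commensurated-LE}: it first uses the Schlichting/Bergman--Lenstra theorem via Proposition~\ref{prop:contain U} to replace $\Lambda$ by a commensurable locally elliptic commensurated subgroup $H$ containing the full compact open subgroup $\Aut(\Trd)$ (so that $\Stab_H(\Sc_m)$ is transitive on each spherical partition), and then proves Theorem~\ref{thm:non-comm} by extracting, from non-compactness, an infinite breaking sequence of ``weakly breaking triples'' $(z_n,R_n,S_n)$ down the rightmost branch; finiteness of $|H:H\cap gHg^{-1}|$ for $g$ a translation down that branch is exploited through Lemma~\ref{lem:finite_depth} to make the triples cohere into a genuine breaking sequence, which contradicts commensuration by Lemma~\ref{lem:breaking_sequence}. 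None of this machinery is visible in your sketch, so as written Case~A is an assertion rather than an argument. (Your preliminary claim that the set of translations is open, used to see that $\ol\Lambda$ is still elliptic, also needs justification, but it is avoidable: the paper instead shows $\Lambda$ itself is relatively compact and only then passes to $\ol\Lambda$, which is commensurated by Lemma~\ref{lem:commensurated_closure}.)
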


It may happen that $\Lambda$ is commensurated but not closed (see Example \ref{ex-nonclosed-example}), so in the conclusions of Theorems \ref{thm-aut(T)-intro} and \ref{thm:aaut-intro}, passing to the closure $\ol{\Lambda}$ is necessary. We remark further that \tdlc groups admit a basis of neighborhoods at the identity of compact open subgroups, and any compact open subgroup must be commensurated in the ambient group. A non-discrete and non-compact \tdlc group therefore always admits at least three distinct commensurability classes of commensurated subgroups: the class of the trivial subgroup, the class of a compact open subgroup, and the class of the entire group. The conclusion of Theorem \ref{thm:aaut-intro} is thus optimal.


We then consider the three groups $F$, $T$, and $V$ introduced by R.\ Thompson. Recall that $T$ is the group of orientation-preserving homeomorphisms of the circle which are piecewise linear with power-of-two slopes and have only finitely many breakpoints all at dyadic rationals. The group $F$ is the stabilizer of $0$ in $T$, so $F$ acts on the interval $[0,1]$. For a detailed introduction to these groups, we refer the reader to \cite{CFP}. We will view $F$ and $T$ inside the group $V$, which is itself a subgroup of the almost automorphism group $\mathrm{AAut}(\mc{T}_{2,2})$; see Subsection \ref{subsec-aaut}.

\begin{thm}[See Section~\ref{sec-proofs-commens}] \label{thm-intro-thompson}
The following hold:
\begin{enumerate}
	\item The commensurated subgroups of Thompson's group $F$ are the normal subgroups of $F$. (These are the subgroups of $F$ that contain the derived subgroup of $F$.)
	\item Every proper commensurated subgroup of Thompson's group $T$ is finite.
	\item Every proper commensurated subgroup of Thompson's group $V$ is locally finite.
\end{enumerate}
\end{thm}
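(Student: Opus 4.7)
The strategy is to view $F \leq T \leq V$ as dense subgroups of $\aautbis$ and combine Theorems~\ref{thm-intro-tot-ell} and~\ref{thm:aaut-intro} with the specific rigidity of each group. Fix $G \in \{F,T,V\}$ and a commensurated subgroup $\Lambda \leq G$; the analysis will split according to whether $\Lambda$ contains a translation in the sense of $\aautbis$.

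First, suppose $\Lambda$ contains no translation. Then Theorem~\ref{thm-intro-tot-ell} shows $\Lambda$ is locally elliptic, so every compactly generated $\Gamma \leq \Lambda$ preserves a partition $\partial \mathcal{T}_{2,2} = B_1 \sqcup \cdots \sqcup B_n$ on which it acts by homotheties. In $V$, the action on each ball is the canonical tree-affine homothety determined by the bijection of the partition, so $\Gamma$ embeds into $\mathrm{Sym}(n)$ and is therefore finite; hence $\Lambda$ is locally finite. In $T$, orientation preservation restricts the permutation action to a cyclic one, while an orientation-preserving self-isometry of an arc must be the identity, so $\Gamma$ is finite cyclic; to rule out infinite locally cyclic commensurated subgroups of $T$ (such as the dyadic rotations), I will show that a generic conjugation in $T$ destroys the arithmetic structure of such a subgroup, violating commensurability, so $\Lambda$ is finite. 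In $F$, elements preserve the linear order on $[0,1]$ so each $B_i$ is setwise fixed, and the identity-on-arc argument then forces $\Gamma = \{1\}$, hence $\Lambda = \{1\}$, which is normal in $F$.

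Next, suppose $\Lambda$ contains a translation $t$. Commensuration propagates $t$ in a controlled way: for each $v \in G$ the finite-index intersection $v\Lambda v^{-1} \cap \Lambda$ forces a positive power $vt^{k(v)}v^{-1}$ into $\Lambda$. The plan is to reduce to Theorem~\ref{thm:aaut-intro} by passing to the closure $\overline{\Lambda} \leq \aautbis$: the topological inequality $[\overline{H}:\overline{K}] \leq [H:K]$ gives $G \subseteq \mathrm{Comm}_{\aautbis}(\overline{\Lambda})$, and extending commensuration to all of $\aautbis$ uses the density of $G$ together with the openness of the commensurator of a compact open subgroup of $\aautbis$. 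Since $\overline{\Lambda}$ contains the non-periodic $t$, the trichotomy in Theorem~\ref{thm:aaut-intro} forces $\overline{\Lambda} = \aautbis$, i.e.\ $\Lambda$ is dense in $\aautbis$. Simplicity then finishes the argument: for $V$ and $T$, iterating the commensuration-propagation along a finite generating set produces a collection of translation-conjugates in $\Lambda$ whose normal closure in $G$ is nontrivial and hence equal to $G$ by simplicity, forcing $\Lambda = G$; for $F$, one analyzes $\Lambda \cap [F,F]$, which is commensurated in the simple group $[F,F]$, and concludes $[F,F] \subseteq \Lambda$ by the parallel argument inside $[F,F]$, so $\Lambda$ is normal in $F$.

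The main obstacle is the translation case. When $\overline{\Lambda}$ is not a priori open in $\aautbis$, extending the commensurating action of $G$ to all of $\aautbis$ is delicate, since the commensurator of an arbitrary closed subgroup need not be closed; some hybrid argument combining the presence of $t$ with density of $G$ will be needed to reach the ``compact open'' regime where openness of the commensurator applies. The subsequent step of promoting a dense commensurated $\Lambda$ to $\Lambda = G$ (or $\Lambda \supseteq [F,F]$) is also subtle because the conjugation powers $k(v)$ vary with $v$ and must be controlled in tandem with the simplicity of the Thompson groups and of $[F,F]$.
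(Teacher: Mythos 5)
Your elliptic/translation dichotomy matches the paper's skeleton, and your treatment of the translation-free case is close to the paper's (which runs through Corollary~\ref{cor-torion-Vdk} and, for $T$, a primary decomposition of the resulting locally cyclic group: a Pr\"ufer summand would have no proper finite-index subgroup and hence be normal in the simple group $T$, and a direct sum of finite cyclic groups plus finite generation of $T$ yields a normalized finite-index subgroup, which must be trivial). Your placeholder ``a generic conjugation destroys the arithmetic structure'' needs to be replaced by an argument of this kind, but that is a local repair.

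The translation case is where your route genuinely breaks down, in two places you yourself flag. First, passing to $\overline{\Lambda}\leq\aautbis$ only gives that the \emph{countable dense} subgroup $G$ commensurates $\overline{\Lambda}$; Theorem~\ref{thm:aaut-intro} requires commensuration by all of $\aautbis$, and since the commensurator of a closed non-open subgroup need not be closed, density of $G$ does not bridge this. Second, even granting $\overline{\Lambda}=\aautbis$, a dense subgroup of $\aautbis$ need not equal $G$, and knowing that $\Lambda$ contains elements $vt^{k(v)}v^{-1}$ for $v$ in a generating set does not make $\Lambda$ normal, so simplicity of $G$ cannot be invoked. The paper avoids both problems by staying entirely inside the discrete group: given a translation $\gamma\in\Lambda$, Lemma~\ref{lem-trans-2balls} produces disjoint balls with $\gamma^{kn}(B_1),\gamma^{kn}(B_2)\subsetneq B_2$, and for $x,y\in G_{B_1}$ (elements supported in $B_1$) one computes $[[x,\gamma^{kn}],y]=[x,y]$; Lemma~\ref{lem-comm-double}, which is a purely group-theoretic consequence of commensuration, puts $[[x,\gamma^{kn}],y]$ into $\Lambda$ for a suitable $k$, so $[G_{B_1},G_{B_1}]\leq\Lambda$ (Proposition~\ref{prop-tran-branch}). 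Since $G_{B_1}$ contains a nontrivial simple subgroup $L$ with no proper finite-index subgroup ($F'_{d,d}$, $V'_{d,d}$, or $\mathrm{AAut}(\mc{T}_{d,d})$ as appropriate), commensuration forces $L\leq g\Lambda g^{-1}$ for every $g$, so $\Lambda$ contains the normal closure of $L$, which is all of $G$ by simplicity. This double-commutator mechanism is the missing ingredient that converts ``contains a translation'' into ``contains everything,'' and without it (or a substitute) your translation case does not close.
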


In particular, Thompson's group $T$ is an example of a finitely presented simple group with \textit{exactly two} commensurability classes of commensurated subgroups. To the best of our knowledge, no other group is known to have these properties. We mention that the examples of finitely presented groups which are torsion-free and simple, constructed by M.\ Burger and S.\ Mozes in \cite{BM-IHES-2} as lattices in product of trees, admit non-trivial proper commensurated subgroups, and these are always non-amenable; see the end of Section \ref{sec-action-tree}.

Our results also elucidate a difference between $T$ and $V$. The group $V$ does admit a commensurated subgroup $\Lambda \leq V$ which is infinite and of infinite index - namely the group of finitary automorphisms of $\mathcal{T}_{2,2}$; see for instance \cite[Example 6.7, Proposition 7.11]{LB14}. Hence, while $T$ has only two commensurability classes of commensurated subgroups, $V$ has at least three commensurability classes of commensurated subgroups: the trivial group, the group of finitary automorphisms of $\mathcal{T}_{2,2}$, and the entire group $V$.

\subsection{Applications} \label{subsec-appli}
Our first application shows there exist strong restrictions on the possible continuous morphisms from the group $\aaut$ to a \tdlc group. 

\begin{cor} \label{cor-intro-embed-aaut}
If $H$ is a \tdlc group and $\psi: \aaut \rightarrow H$ is a continuous morphism, then $\psi$ has closed image.
\end{cor}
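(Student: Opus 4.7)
The plan is to use the abstract simplicity of $\aaut$ (Kapoudjian's theorem) together with the commensurated trichotomy of Theorem \ref{thm:aaut-intro}. Since $\ker \psi$ is a closed normal subgroup of the abstractly simple group $\aaut$, either $\ker \psi = \aaut$ (in which case $\psi(\aaut) = \{1\}$ is trivially closed) or $\psi$ is injective, which I assume for the rest of the argument.

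Let $U \leq H$ be a compact open subgroup. The preimage $V := \psi^{-1}(U)$ is an open, hence closed, subgroup of $\aaut$, and it is commensurated in $\aaut$ because $U$ is commensurated in $H$. By Theorem \ref{thm:aaut-intro}, combined with the fact that $\aaut$ is non-discrete (the compact open subgroup $\Aut(\treedk)$ being infinite profinite), $V$ is either compact open or equal to $\aaut$. Not every such $V$ can equal $\aaut$: otherwise $\psi(\aaut) \subseteq U$ for every compact open $U \leq H$, whence $\psi(\aaut) \subseteq \{1\}$ by van Dantzig's theorem, contradicting injectivity. Fix a compact open $U \leq H$ for which $V := \psi^{-1}(U)$ is a proper compact open subgroup of $\aaut$.

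The key claim is then $\overline{\psi(\aaut)} \cap U = \psi(V)$. For the non-trivial inclusion, take $h \in \overline{\psi(\aaut)} \cap U$ and a net $\psi(g_i) \to h$; since $U$ is an open neighborhood of $h$, eventually $\psi(g_i) \in U$, so $g_i \in V$, and the compactness of $V$ produces a subnet converging to some $g \in V$ with $\psi(g) = h$. The reverse inclusion is immediate. Now, given any $h \in \overline{\psi(\aaut)}$, the open set $hU$ meets $\psi(\aaut)$, so there exists $g_0 \in \aaut$ with $\psi(g_0)^{-1} h \in U$. Since $\overline{\psi(\aaut)}$ is a subgroup containing $\psi(g_0)^{-1}$, it contains $\psi(g_0)^{-1} h$; therefore $\psi(g_0)^{-1} h \in \overline{\psi(\aaut)} \cap U = \psi(V)$, say $\psi(g_0)^{-1} h = \psi(g')$ with $g' \in V$. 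Then $h = \psi(g_0 g') \in \psi(\aaut)$, proving that $\psi(\aaut)$ is closed.

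The main obstacle is locating a compact open $U \leq H$ for which $V = \psi^{-1}(U)$ is a proper subgroup; this is precisely where both the commensurated trichotomy and the abstract simplicity are essential. Without this step, one would be reduced to studying a continuous injection of $\aaut$ into a compact profinite quotient of $H$, where one must instead invoke that a simple group has no proper finite index subgroups to force triviality of $\psi$. Once a genuine proper compact open preimage $V$ is in hand, the closed-image argument above is a relatively standard compact-open manipulation in t.d.l.c.\ groups.
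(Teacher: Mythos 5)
Your proof is correct and follows essentially the same route as the paper: pull back a compact open subgroup of $H$, use the commensurated trichotomy (Theorem~\ref{thm:aaut-intro}) to conclude the preimage is compact open or all of $\aaut$, rule out the latter via van Dantzig, and then deduce closedness of the image from the compactness of $\psi^{-1}(U)$. The only differences are cosmetic: you invoke simplicity to reduce to the injective case where the paper simply notes a trivial image is closed, and you spell out in full the standard fact that a subgroup with compact trace on an open subgroup is closed.
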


We emphasize that the conclusion of Corollary \ref{cor-intro-embed-aaut} fails for certain groups of almost automorphisms of trees which may look similar to $\aaut$, notably the family of groups $\Daaut$ considered in \cite{CdM11, LB14, Sauer-Thu}. We refer the reader to the discussion following Corollary \ref{cor-aaut-closed-im} for details.

Since $\aaut$ is simple, Corollary \ref{cor-intro-embed-aaut} implies that any non-trivial continuous morphism $\psi:\aaut \rightarrow H$ is such that $\psi:\aaut\rightarrow \psi(\aaut)$ is an \textit{isomorphism of topological groups}.

%
%
Our results further impose restrictions on how Thompson's groups can embed into locally compact groups. Our first restrictions follow readily from Theorem~\ref{thm-intro-thompson}. The following results strongly contrast with the fact that Thompson's group $F$ and $T$ are respectively dense in the Polish groups $\mathrm{Homeo}^+([0,1])$ and $\mathrm{Homeo}^+(\mathbb{S}^1)$.

\begin{cor} \label{cor-intro-embed-F}
Any embedding of Thompson's group $F$ into a \tdlc group $H$ intersects trivially any compact open subgroup of $H$. In particular, if $F$ acts faithfully on a connected locally finite graph, then the action on the set of vertices is free.
\end{cor}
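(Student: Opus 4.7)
The plan is to reduce the statement to Theorem~\ref{thm-intro-thompson}(1) via the standard observation that compact open subgroups of a t.d.l.c.\ group are commensurated, combined with the well-known fact that preimages of commensurated subgroups under group homomorphisms are commensurated.

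More precisely, let $\psi\colon F \hookrightarrow H$ be an injective homomorphism into a t.d.l.c.\ group $H$, and let $U\leq H$ be a compact open subgroup. Then $U$ is commensurated in $H$, and a direct verification shows that $\Lambda := \psi^{-1}(U)$ is commensurated in $F$: for any $g\in F$, the intersection $g\Lambda g^{-1} \cap \Lambda = \psi^{-1}(\psi(g)U\psi(g)^{-1} \cap U)$ has finite index in both $g\Lambda g^{-1}$ and $\Lambda$, since the quotient embeds into the corresponding finite quotient of $U$-translates in $H$. Applying Theorem~\ref{thm-intro-thompson}(1), we conclude that $\Lambda$ is a normal subgroup of $F$, hence either trivial or containing the derived subgroup $[F,F]$.

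The main step is to rule out the second alternative. If $[F,F] \subseteq \Lambda$, then $\psi([F,F]) \subseteq U$, so the closure $K := \overline{\psi([F,F])}$ is a compact subgroup of the t.d.l.c.\ group $H$, and is therefore profinite. Since $\psi$ is injective, the abstract group $[F,F]$ embeds into the profinite group $K$; intersecting with open normal subgroups of $K$ then exhibits $[F,F]$ as residually finite. However, $[F,F]$ is a classical example of an infinite simple group, and infinite simple groups are never residually finite. This contradiction forces $\Lambda = \{1\}$, i.e.\ $\psi(F) \cap U = \{1\}$.

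For the second assertion, suppose $F$ acts faithfully on a connected locally finite graph $\Gamma$. Equipped with the permutation topology, $\Aut(\Gamma)$ is a t.d.l.c.\ group in which each vertex stabilizer $\Stab(v)$ is a compact open subgroup. The faithful action gives an embedding $F \hookrightarrow \Aut(\Gamma)$, and applying the first part to $U = \Stab(v)$ yields $F \cap \Stab(v) = \{1\}$ for every vertex $v$, which is exactly freeness of the action on vertices. The only step requiring genuine input beyond bookkeeping is the non-residual-finiteness of $[F,F]$, which follows immediately once one invokes its simplicity.
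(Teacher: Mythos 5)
Your proof is correct, and it shares its backbone with the paper's argument: both start from the observation that $\psi^{-1}(U)$ is commensurated in $F$, hence (by Proposition~\ref{prop-commens-F}) normal, hence trivial or containing $[F,F]$. Where you diverge is in the endgame. The paper first establishes that the image of $F$ is \emph{discrete} (arguing as in Proposition~\ref{prop-closed-image}: if $\psi^{-1}(U)\supseteq[F,F]$ for every compact open $U$, then $\psi([F,F])\subseteq\bigcap U=\{1\}$, contradicting injectivity, so some $\psi^{-1}(U_0)$ is trivial), and then disposes of an arbitrary $U$ by noting that a discrete subgroup meets a compact subgroup in a finite subgroup, which must be trivial because $F$ is torsion-free. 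You instead rule out $[F,F]\subseteq\psi^{-1}(U)$ for \emph{each} $U$ separately, via the fact that the infinite simple group $[F,F]$ cannot embed in a profinite group since it is not residually finite. Both routes are valid; yours reaches the stated conclusion directly without passing through discreteness or invoking torsion-freeness, at the cost of using the simplicity of $[F,F]$ (a classical fact the paper itself relies on in Theorem~\ref{thm:no_trans}), while the paper's route yields the extra intermediate conclusion that the image is discrete. Your treatment of the graph-theoretic consequence via the permutation topology on $\Aut(\Gamma)$ is the intended one.
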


\begin{cor} \label{cor-intro-embed-T}
Any embedding of Thompson's group $T$ into a \tdlc group has discrete image. In particular, any non-trivial action of $T$ on a connected locally finite graph is proper.
\end{cor}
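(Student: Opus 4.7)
The plan is to deduce both assertions from Theorem~\ref{thm-intro-thompson}(2), which states that every proper commensurated subgroup of $T$ is finite. I will handle the main claim first and then derive properness of the graph action as a consequence.

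For the discrete image statement, let $\varphi: T \hookrightarrow H$ be an injective homomorphism into a \tdlc group $H$, and fix a compact open subgroup $U \leq H$. Since $U$ is commensurated in $H$ and commensuration pulls back under group homomorphisms, the preimage $\Lambda := \varphi^{-1}(U)$ is commensurated in $T$. Theorem~\ref{thm-intro-thompson}(2) then forces $\Lambda$ to be either finite or all of $T$.

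The main obstacle is to rule out the second case $\Lambda = T$, i.e.\ $\varphi(T) \subseteq U$. Here I will use two facts about $T$: it is simple (classical), and it has no proper subgroup of finite index. The latter follows by the standard argument that a finite-index subgroup would produce a nontrivial homomorphism $T \to \Sym(n)$, which is injective by simplicity, contradicting $|T| = \infty$. If $\varphi(T) \subseteq U$, then $\overline{\varphi(T)}$ is a compact \tdlc group, hence profinite; its open normal subgroups of finite index separate points, so by pulling back through $\varphi$ we obtain finite-index subgroups of $T$ whose intersection is trivial. By the previous observation, each such pullback must equal $T$, so $\varphi(T)$ lies inside every open normal subgroup of $\overline{\varphi(T)}$, forcing $\varphi$ to be trivial and contradicting injectivity. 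Hence $\Lambda$ is finite.

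Consequently $\varphi(T) \cap U$ is a finite subset of $H$ containing the identity, and Hausdorffness of $H$ yields an open neighborhood $V \subseteq U$ of the identity with $V \cap \varphi(T) = \{e\}$, proving $\varphi(T)$ is discrete. For the \emph{in particular} clause, any nontrivial action of $T$ on a connected locally finite graph $X$ determines a nontrivial, and hence by simplicity injective, homomorphism $\varphi: T \to \Aut(X)$. Equipping $\Aut(X)$ with the permutation topology makes it a \tdlc group whose vertex stabilizers are compact open (by local finiteness and connectedness of $X$). By the first part $\varphi(T)$ is discrete, so for each vertex $v$ the image $\varphi(T_v) = \varphi(T) \cap \Aut(X)_v$ is both discrete and relatively compact, hence finite; by injectivity $T_v$ itself is finite. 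Thus the action is proper.
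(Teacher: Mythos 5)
Your argument is correct, and it rests on the same key input as the paper's proof: pull back a compact open subgroup $U\leq H$ to get a commensurated subgroup of $T$, then invoke the classification of commensurated subgroups of $T$ (Proposition~\ref{prop-commens-T}). The finish differs in two places. First, to rule out $\varphi^{-1}(U)=T$ you argue that $\overline{\varphi(T)}$ would be profinite and use that $T$ has no proper finite-index subgroups; the paper's Proposition~\ref{prop-closed-image} instead only needs to rule out $\varphi^{-1}(U)=G$ for \emph{every} $U$ simultaneously (which forces $\varphi(T)\leq\bigcap U=1$), so your sub-argument is stronger than necessary but perfectly valid and arguably more self-contained, since it disposes of any single offending $U$. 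Second, having shown $\varphi(T)\cap U$ is finite you conclude discreteness directly; the paper instead deduces that the image is closed and then appeals to the Baire-category fact that a countable closed subgroup of a locally compact group is discrete. Your route avoids that fact entirely, which is a small gain in elementarity. The deduction of properness of graph actions from finiteness of vertex stabilizers is standard and matches what the paper leaves implicit; just note, when you say $\varphi(T)\cap\Aut(X)_v$ is ``discrete and relatively compact, hence finite,'' that the clean justification is that a discrete subgroup of a Hausdorff group is closed, so this intersection is a compact discrete group.
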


These results also establish another difference between the groups $F$,$T$, and $V$. The group $V$ admits non-proper vertex-transitive actions on connected locally finite graphs; for example, one can take a Cayley-Abels graph of the group $\mathrm{AAut}(\mathcal{T}_{2,2})$. 

We lastly consider how $T$ can appear as a lattice in a locally compact group.

\begin{thm}[See Theorem~\ref{thm:latticesT}]
Suppose that $G$ is a compactly generated locally compact group admitting $T$ as a lattice and denote by $R$ the locally elliptic radical of $G$. Then $R$ is compact, $G/R$ is a \tdlc group with a unique minimal non-trivial closed normal subgroup $H$, and $H$ satisfies the following properties:
\begin{enumerate}
	\item $H$ is a compactly generated topologically simple \tdlc group;
	\item $H$ is cocompact in $G/R$ and contains $T$ as a lattice.
\end{enumerate}  
\end{thm}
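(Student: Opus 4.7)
The plan is to proceed in three stages: first reduce to a totally disconnected quotient, then establish that $R$ itself is compact, and finally identify the monolith of $G/R$ as the desired $H$, with topological simplicity being the most delicate step.

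I would first show $G^\circ$ is compact, so that $G^\circ \subseteq R$ and $G/R$ is \tdlc. Since $T \cap G^\circ$ is a discrete normal subgroup of the connected group $G^\circ$, it is central in $G^\circ$; non-abelian simplicity of $T$ then forces $T \cap G^\circ = \{1\}$. Applying Corollary~\ref{cor-intro-embed-T} to the injection $T \hookrightarrow G/G^\circ$ yields a discrete image, so $TG^\circ$ is closed in $G$, and the natural $T$-equivariant bijection $G^\circ \to TG^\circ/T$ identifies (up to scaling) Haar measure on $G^\circ$ with the restriction of the invariant probability measure on $G/T$ to the closed subset $TG^\circ/T$. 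Hence $G^\circ$ has finite Haar measure and is compact.

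The same mechanism then shows $R$ is compact. Simplicity of $T$ gives $T \cap R \in \{\{1\}, T\}$; the containment $T \subseteq R$ is impossible because $R$ is locally elliptic while $T$ is finitely generated and discrete, hence would be finite. Corollary~\ref{cor-intro-embed-T} applied to $T \hookrightarrow G/R$ then yields a discrete image, and the covolume argument gives that $R$ has finite Haar measure and is compact; writing $\pi \colon G \to G/R$ for the quotient, $\pi(T)$ is a lattice in $G/R$. For any non-trivial closed normal $N \trianglelefteq G/R$ with preimage $\tilde N$, the same dichotomy applied to $\tilde N \cap T$ rules out the trivial case (which would give $\tilde N$ compact, hence $\tilde N \subseteq R$ and $N = \{1\}$), so $\pi(T) \subseteq N$. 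Thus the closed $G/R$-normal subgroup $H$ generated by $\pi(T)$ is the unique minimal non-trivial closed normal subgroup of $G/R$. The lattice $\pi(T)$ maps trivially in $(G/R)/H$, so this quotient has finite Haar measure and is compact; $H$ is therefore cocompact in $G/R$, hence compactly generated, and contains $\pi(T)$ as a lattice.

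The delicate part is the topological simplicity of $H$. The key observation is that every $G/R$-conjugate $g\pi(T)g^{-1}$ is again a lattice in $H$, since conjugation by $g$ is a homeomorphism of the normal subgroup $H$. For a non-trivial closed $K \trianglelefteq H$ and any $g \in G/R$, the intersection $K \cap g\pi(T)g^{-1}$ is normal in $g\pi(T)g^{-1} \cong T$ and so is trivial or all of $g\pi(T)g^{-1}$. If some conjugate intersects $K$ trivially, the argument of the second stage applied inside $H$ (with the lattice $g\pi(T)g^{-1}$ and the \tdlc quotient $H/K$) forces $K$ to be compact; but the locally elliptic radical $R_H$ of $H$ is characteristic in $H$, hence normal in $G/R$, and therefore contained in the locally elliptic radical of $G/R$, which is trivial (using the standard fact that the quotient of a group by its locally elliptic radical has trivial locally elliptic radical). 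This forces $K = \{1\}$ and contradicts non-triviality. Therefore every conjugate satisfies $g\pi(T)g^{-1} \subseteq K$, and since such conjugates topologically generate $H$, we conclude $K = H$. The main obstacle is that, to rule out a proper closed normal $K$ already containing $\pi(T)$, one cannot argue with $\pi(T)$ itself; one must instead exhibit a $G/R$-conjugate $g\pi(T)g^{-1}$ intersecting $K$ trivially, whose existence then forces $K$ compact and hence trivial.
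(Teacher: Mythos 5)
Your overall architecture is sound and genuinely different from the paper's: where the paper runs everything through the commensurated-subgroup machinery (for a non-compact closed normal $N\trianglelefteq G$ and a compact open $U$, the subgroup $UN$ is open and commensurated by Lemma~\ref{lem:commensurated_and_normal}, hence contains $T$ by Proposition~\ref{prop-commens-T}, whence $T\leq\bigcap_i U_iN=N$), you instead combine Corollary~\ref{cor-intro-embed-T} with the Raghunathan-type lemma that if $\Gamma$ is a lattice in a locally compact group and $N$ is a closed normal subgroup with $\Gamma N$ closed, then $\Gamma\cap N$ is a lattice in $N$. Your proof of the topological simplicity of $H$ via the dichotomy for $G/R$-conjugates of $\pi(T)$ (each conjugate either lies in $K$ or meets it trivially) is also a clean alternative to the paper's, which reruns the normal-subgroup argument inside $H$ and then observes that the intersection of all non-trivial closed normal subgroups of $H$ is characteristic, hence normal in $G/R$. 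Your concluding worry is already answered by your own dichotomy: if $K$ is proper then some conjugate is not contained in $K$, hence meets it trivially by simplicity of $T$, hence $K$ is compact, contradicting either non-triviality or $\pi(T)\leq K$.

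Two steps need repair. First, the claim that $T\cap G^{\circ}$ is a \emph{normal} subgroup of $G^{\circ}$ is false: since $G^{\circ}\trianglelefteq G$, the intersection $T\cap G^{\circ}$ is normal in $T$, not in $G^{\circ}$ (a lattice in a connected group is essentially never normal in it), so the "discrete normal in connected, hence central" step collapses. The conclusion $T\cap G^{\circ}=\{1\}$ survives for a different reason: normality in $T$ plus simplicity gives $T\cap G^{\circ}\in\{1,T\}$, and $T\leq G^{\circ}$ would make $T$ a lattice in a connected locally compact group, which is impossible for a finitely generated infinite simple group --- exactly the fact the paper invokes. Second, justifying the covolume step by "restricting the invariant probability measure to the closed subset $TG^{\circ}/T$" is too naive: a closed invariant subset of the form $NT/T$ can have measure zero (a closed line in a torus, say), so the restriction need not be a Haar measure on $N$. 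One must invoke the genuine disintegration statement behind the lemma quoted above. The paper sidesteps this entirely by only pushing the finite invariant measure \emph{forward} to quotients $G/N$ with $T\leq N$, which is the easy direction. With these two repairs your proof goes through.
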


Whether or not $H$ can be non-discrete in the above theorem is an interesting open question. As there is a growing structure theory of simple \tdlc groups, see for example \cite{CRW13}, the above result may help to resolve this question.

\begin{ack}
We are grateful to Fran\c{c}ois Le Ma\^{i}tre for his many helpful comments. We also thank the anonymous referee for pointing out an improvement to the proof of Theorem~\ref{thm:non-comm}.

The first named author would like to thank the organizers of the conference \textit{Workshop on the extended family of Thompson's groups} held in St. Andrews, during which part of this project grew up. Portions of this work were also developed during a stay of both authors at the \textit{Mathematisches Forschungsinstitut Oberwolfach}; we thank the MFO for its hospitality.
\end{ack}

\section{Preliminaries}

\subsection{Notations for $\Trd$}
The tree $\Trd$ is the rooted tree where the root $r$ has degree $k$ and other vertices have degree $d+1$. The \textbf{level} of a vertex is its distance from the root. If $v$ is a vertex of level $n$, the neighbors of $v$ of level $n+1$ are the \textbf{children} of $v$. The \textbf{descendants} of $v$ are all vertices $w$ such that the geodesic from $w$ to the root contains $v$. We denote by $\Trd^v$ the subtree spanned by the descendants of $v$, and we call $\Trd^v$ the \textbf{rooted tree below} $v$. A subtree $L$ of $\Trd$ is \textbf{regular} if there is a vertex $v$ of level at least one such that $L = \Trd^v$. 

The boundary $\bord$ is the set of infinite sequences of vertices $(r=\xi_0,\xi_1,\ldots)$ such that $\xi_{n+1}$ is a child of $\xi_{n}$ for every $n \geq 0$. If $\xi,\xi' \in \bord$ are two boundary points, we denote by $N(\xi,\xi')$ the largest integer $n \geq 0$ such that $\xi_{n} = \xi_{n}'$. Endowed with the metric $\mathrm{dist}(\xi,\xi') := d^{-N(\xi,\xi')}$, the space $(\bord,\mathrm{dist})$ is a compact metric space homeomorphic to a Cantor set. The group of isometries of $(\bord,\mathrm{dist})$ is precisely the group $\aut$ of automorphisms of the rooted tree $\treedk$.

If $L$ is a regular subtree of $\Trd$ rooted at $v$, we will denote $\partial L$ the subset of $\bord$ consisting of sequences $(\xi_n)_{n\in \Nb}\in \bord$ such that $\xi_n = v$ for some $n \geq 1$. One may check that $\partial L$ is a proper ball of $\partial \Trd$ and conversely that every proper ball of $\partial \Trd$ is of the form $\partial L$ for some regular subtree $L$.

Let $\mc{P}$ be a partition of $\partial \Trd$. If $\mc{P}'$ is a partition refining $\mc{P}$, we write $\mc{P}'\sleq \mc{P}$. We say that the partition $\mc{P}$ is \textbf{regular} if $\mc{P}$ consists of proper balls; equivalently, $\mc{P}$ consists of sets of the form $\partial L$ for regular subtrees $L$. The \textbf{$n$-th spherical partition} of $\bord$ is
\[
\Sc_n:=\{\partial \Trd^v\mid d(r,v)=n\}.
\]

We make an easy observation that will often be used implicitly to manipulate regular partitions: \textit{For any metric balls $B_1,B_2\subseteq \bord$, either $B_1\cap B_2=\emptyset$, $B_1\subseteq B_2$, or $B_2\subseteq B_1$.}

\subsection{The group $\aaut$} \label{subsec-aaut}

We here give a brief account of the group $\aaut$; we direct the reader to \cite{CdM11,Ka99} for more detailed discussions. 

For metric spaces $(X,d_X)$ and $(Y,d_Y)$, a map $\psi:X\rightarrow Y$ is a \textbf{homothety} if there is $\lambda\in \mathbb{R}^{+}$ such that $d_X(x,x')=\lambda d_Y(\psi(x),\psi(x'))$ for all $x,x'\in X$. An \textbf{almost automorphism} of $\treedk$ is a homeomorphism $g \in \mathrm{Homeo}(\bord)$ such that there exists a regular partition $\bord = B_1 \sqcup \ldots \sqcup B_n$ for which $g_{\rest B_i}: B_i \rightarrow g(B_i)$ is a homothety for every $i$. 

\begin{defn} 
For $g \in \aaut$, a partition $\mc{P} = \left\{B_1,\ldots, B_n\right\}$ of $\bord$ into finitely many proper balls is \textbf{admissible} for $g$ if $g_{\rest B_i}: B_i \rightarrow g(B_i)$ is a homothety for every $i$. We say a partition $\mc{P}$ is admissible for a subgroup $K\leq\aaut$ if it is admissible for each element $k\in K$.
\end{defn} 

If $\mc{P}$ is admissible for $g\in \aaut$ and $\mc{P}'$ is a regular partition refining $\mc{P}$, then $\mc{P}'$ is admissible for $g$. We denote by $g.\mc{P}$ the partition obtained by applying $g$, i.e. $g.\mc{P}:=\{g(B)\mid B\in \mc{P}\}$. If $\mc{P}$ is admissible for $g$, then $g.\mc{P}$ is a regular partition admissible for $g^{-1}$.

\begin{defn} 
An element $g \in \aaut$ is \textbf{elliptic} if there exists a partition $\mc{P} = \left\{B_1,\ldots, B_n\right\}$ of $\bord$ that is admissible for $g$ and preserved by $g$ - i.e.\ $g.\mc{P}=\mc{P}$.
\end{defn}

For a regular partition $\mc{P}$ and $K\leq \aaut$, we define the \textbf{stabilizer} $\Stab_K(\mc{P})$ of $\mc{P}$ in $K$ to be the set of elements $k \in K$ such that $\mc{P}$ is admissible for $k$ and $k.\mc{P}=\mc{P}$. When $K=\aaut$, we simply write $\Stab(\mc{P})$. The group $\Stab(\mc{P})$ is a compact open subgroup of $\aaut$. Elements stabilizing a spherical partition admit a useful characterization.

\begin{lem}\label{lem:spherical_part}
	Let $g\in \aaut$. There is a spherical partition $\Sc_n$ such that $g\in \Stab(\Sc_n)$ if and only if for all partitions $\mc{P}$ admissible for $g$ and $B\in \mc{P}$, the restriction $g_{\rest B}:B\rightarrow g(B)$ is an isometry.
\end{lem}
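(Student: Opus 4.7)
The plan is to leverage two basic facts about balls in $\bord$: every ball in $\Sc_n$ has diameter $d^{-n}$, and any two regular balls are either disjoint or one contains the other. Both directions then reduce to comparing an arbitrary admissible partition with a suitably chosen spherical partition.

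For the forward implication, I assume $g \in \Stab(\Sc_n)$. Since $g$ permutes $\Sc_n$ and every ball of $\Sc_n$ has diameter $d^{-n}$, the homothety $g_{\rest \partial\Trd^v}$ (for $v$ at level $n$) must have ratio $1$, i.e.\ be an isometry. Given any admissible partition $\mc{P}$ for $g$ and any $B \in \mc{P}$, the ultrametric observation ensures either $B$ is contained in some ball of $\Sc_n$, in which case $g_{\rest B}$ is a restriction of an isometry, or $B$ contains some $\partial\Trd^w \in \Sc_n$, in which case the homothety ratio of $g_{\rest B}$ must coincide with the ratio of the isometry $g_{\rest \partial\Trd^w}$, again forcing ratio $1$.

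For the reverse implication, I fix one admissible partition $\mc{P} = \{B_1, \ldots, B_m\}$ for $g$. The hypothesis forces each $g_{\rest B_i}$ to be an isometry, so $g(B_i)$ is a ball of the same diameter as $B_i$. I then choose $n$ large enough that $\Sc_n$ refines $\mc{P}$. For any $\partial\Trd^w \in \Sc_n$, there is a unique $i$ with $\partial\Trd^w \subseteq B_i$; since $g_{\rest B_i}$ is an isometry, $g(\partial\Trd^w)$ is a ball of diameter $d^{-n}$ inside $g(B_i)$, hence of the form $\partial\Trd^{w'}$ with $w'$ at level $n$. Thus $g$ permutes $\Sc_n$ and lies in $\Stab(\Sc_n)$.

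There is no serious technical obstacle here; the only subtle point is recognizing that a homothety between regular balls is an isometry precisely when it preserves levels (since diameters determine the levels), which reduces the problem to matching up $\Sc_n$ with an arbitrary admissible partition via the ultrametric structure. Note also that in the reverse direction we only ever use the hypothesis for one admissible partition, even though the statement quantifies over all of them.
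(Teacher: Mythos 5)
Your proof is correct and follows essentially the same route as the paper's: in both directions the key point is that a homothety's ratio is detected on any sub-ball, and that an isometric image of a spherical ball inside a regular ball is again a spherical ball. The only cosmetic difference is in the forward direction, where the paper enlarges $n$ so that $\Sc_n$ refines $\mc{P}$ while you handle the two containment cases of the ultrametric trichotomy directly; the substance is identical.
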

\begin{proof}
	Suppose that $g\in \Stab(\Sc_n)$ and observe that $g_{\rest B}:B\rightarrow g(B)$ is an isometry for all $B\in \Sc_n$. Consider $\mc{P}$ an admissible partition for $g$. By taking a larger $n$ if needed, we may assume that $\Sc_n\sleq \mc{P}$. For each $C\in \mc{P}$, there is thus $B\in \Sc_n$ such that $B\subseteq C$. The element $g$ acts as an isometry on $B$, hence $g$ must act as an isometry on $C$, since $g$ acts as a homothety on $C$. We have thus verified the forward implication.
	
	Conversely, we may find a spherical partition $\Sc_n$ which is admissible for $g$. Since $g$ restricts to an isometry on each $B\in \Sc_n$, the image $g.\Sc_n$ is a regular partition of $\bord$ by balls of the same diameter. It follows that $g.\Sc_n=\Sc_n$.
\end{proof}

For a ball $B\subseteq \bord$ and $K\leq \aaut$, we define $\Stab_K(B)$ to be the set of elements $k\in K$ that setwise preserve $B$ and are such that $k_{\rest B}: B \rightarrow B$ is an isometry. The subscript $K$ will be suppressed when $K=\aaut$. The group $\Stab(B)$ is open; however, it is not compact when $B\subsetneq \bord$.

For the rest of the present article, we fix an embedding of $\treedk$ into the oriented plane such that each level $n$ of $\treedk$ lies on a horizontal line $y=r_n$ and $(r_n)_{n\in \Nb}$ is strictly monotone. This induces a total order on the boundary $\bord$. An element $g \in \aaut$ is \textbf{locally order preserving} if $g$ admits an admissible partition $\mc{P}$ such that $g_{\rest B_i}: B_i \rightarrow g(B_i)$ preserves the order for every $i$. The set of locally order preserving elements of $\aaut$ forms a subgroup isomorphic to the Higman-Thompson group $V_{d,k}$ \cite{Higman}. In this notation, Thompson's group $V$ is $V_{2,2}$.

We conclude our discussion by recalling that $\aaut$ is simple as an abstract group. 

\begin{thm}[Kapoudjian]\label{cor-aaut-simple}
	The group $\aaut$ is abstractly simple.
\end{thm}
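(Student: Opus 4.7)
The plan is to adapt the classical Higman--Epstein simplicity argument to the almost automorphism setting, exploiting the ``rich'' action of $\aaut$ on the Cantor set $\bord$. The crucial features are that $\aaut$ is generated by elements with small support and that the group acts highly transitively on proper balls.

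First, I would verify that $\aaut$ is generated by its ball-supported elements together with $V_{d,k}$. Given $g \in \aaut$ with admissible partition $\mc{P} = \{B_1,\ldots,B_n\}$, one can factor $g = \sigma \tau$ where $\sigma \in V_{d,k}$ realizes the permutation of balls $B_i \mapsto g(B_i)$ and $\tau \in \Stab(\mc{P}) = \prod_{i=1}^n \Stab(B_i)$; each factor of $\Stab(\mc{P})$ is supported on a single proper ball.

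Next, let $N \trianglelefteq \aaut$ be a nontrivial normal subgroup. Pick $g \in N \setminus \{1\}$ and a point $\xi \in \bord$ with $g(\xi)\neq\xi$, so there exists a proper ball $B\ni \xi$ with $g(B) \cap B = \emptyset$. For any $a,b \in \Stab(B)$, the element $gag^{-1}$ is supported on $g(B)$, and a direct computation gives
\[
[g,a]\cdot b\,[g,a]^{-1}b^{-1} = [a^{-1}, b],
\]
where the left-hand side is supported on $B$ and lies in $N$. Hence the derived subgroup $[\Stab(B),\Stab(B)]$ sits inside $N$. Conjugating by suitable elements of $\aaut$, which acts transitively on proper balls of a given diameter, I would deduce that $N$ contains the derived subgroup of $\Stab(B')$ for every proper ball $B'$ of $\bord$.

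Finally, I would combine the first and third steps: since $\Stab(B')$ admits a dense simple (hence perfect) subgroup coming from its local analogue of $\aut$, and since $V_{d,k}$ has simple commutator subgroup by Higman's work, every generator of $\aaut$ can be written as a product of commutators of ball-supported elements that lie in $N$. This forces $N = \aaut$. The main obstacle is the last step: one must carefully check that the derived subgroups of the $\Stab(B')$, together with commutators of permutation elements from $V_{d,k}$, really do generate all of $\aaut$. This requires the flexibility afforded by the almost automorphism structure, namely the existence of elements in $V_{d,k}$ that conjugate $\Stab(B_1)$ onto $\Stab(B_2)$ for arbitrary proper balls $B_1, B_2$, which is not available in the pure automorphism setting.
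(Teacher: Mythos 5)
The paper does not actually prove this statement; it cites Kapoudjian and only remarks that his argument for $k=2$ adapts to general $k$ using the simplicity of $[V_{d,k},V_{d,k}]$ and the simplicity of the type-preserving automorphism group of the unrooted regular tree. Your first three steps are sound and standard: the factorization $g=\sigma\tau$ with $\sigma\in V_{d,k}$ and $\tau$ a product of ball-supported isometric elements, the double-commutator identity $[[g,a],b]=[a^{-1},b]$ for $a,b$ supported on a ball $B$ with $g(B)\cap B=\emptyset$ (this is exactly the computation in Proposition~\ref{prop-tran-branch}), and the transitivity of $V_{d,k}$ on proper balls. (Two small notational slips: you write $a,b\in\Stab(B)$, but the argument needs $a,b$ \emph{supported on} $B$, i.e.\ trivial outside $B$, which is not what $\Stab(B)$ means in this paper; and $\Stab(\mc{P})$ is larger than the product of the ball-wise factors since it contains elements permuting the balls.)

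The genuine gap is in your final step. You justify $N=\aaut$ by asserting that $\Stab(B')$ ``admits a dense simple (hence perfect) subgroup coming from its local analogue of $\aut$.'' This is false: the group of isometric elements supported on a proper ball $B'=\partial\Trd^v$ is the iterated permutational wreath product $\Aut(\Trd^v)\cong \Sym(d)\wr\Sym(d)\wr\cdots$, whose abelianization is the infinite product $\prod_{n}\{\pm 1\}$ via the level-wise signature maps --- precisely the homomorphisms exploited in Example~\ref{ex-nonclosed-example} of this paper. It is therefore very far from perfect and contains no dense simple subgroup, so knowing that $N$ contains the derived subgroups of the rigid stabilizers does not yet yield the isometric ball-supported generators from your step one. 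The step can be repaired, but it needs a genuinely different ingredient: either Kapoudjian's route, where one observes that $N$ meets $V_{d,k}$ and the relevant copy of the type-preserving automorphism group of the unrooted tree in nontrivial normal subgroups and invokes Higman's and Tits' simplicity theorems, or an infinite-swindle argument writing an arbitrary $a\in\Aut(\Trd)$ supported on a ball $B_0$ as a single commutator $[b,t]$ with $b=\prod_{i\geq 0}t^iat^{-i}\in\Aut(\Trd)$ and $t\in V_{d,k}$ a contraction supported on a proper ball $B'\supseteq\bigcup_i t^i(B_0)$, so that $a\in[\mathrm{Rist}(B'),\mathrm{Rist}(B')]\leq N$. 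A further minor point: for $d$ odd, $V_{d,k}$ has derived subgroup of index two, so Higman's theorem only puts $[V_{d,k},V_{d,k}]$ inside $N$, and the permutation part $\sigma$ of your factorization must be corrected by a transposition of balls lying in $\Aut(\Trd)$.
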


The proof in \cite{Ka99} is given for the case $k=2$, but the argument readily adapts to arbitrary $k$ by using the simplicity of the commutator subgroup $[V_{d,k},V_{d,k}]$ of the Higman-Thompson groups $V_{d,k}$ and the simplicity of the group of type preserving automorphisms of a regular unrooted tree.

\subsection{Commensurated subgroups}

\begin{defn}
	For a group $G$ with subgroups $H$ and $K$, we say $H\leq G$ is \textbf{commensurated} by $K$ if $kHk^{-1}\cap H$ has finite index in $H$ for all $k\in K$. When $K=G$, we say $H$ is a \textbf{commensurated} subgroup of $G$

\end{defn}

We shall make use of the following equivalent definition of a commensurated subgroup: For $G$ a group, a subgroup $H\leq G$ is \textbf{commensurated} if the left $H$-action on $G/H$ has only finite orbits. We use this definition whenever convenient.

Our first two lemmas appear to be folklore; we include proofs for completeness.

\begin{lem}[Folklore]\label{lem:commensurated_and_normal}
	For $H$ a group, if $A$ and $B$ are commensurated subgroups of $H$ and $A$ normalizes $B$, then the subgroup $AB$ is commensurated in $H$.
\end{lem}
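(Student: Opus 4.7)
The plan is to produce, for each $h\in H$, an explicit finite-index subgroup of $AB$ that is contained in $AB \cap h(AB)h^{-1}$. Fix $h \in H$ and set
\[
A_0 := A \cap hAh^{-1}, \qquad B_0 := B \cap hBh^{-1}.
\]
Because $A$ and $B$ are commensurated in $H$, both $A_0$ and $B_0$ have finite index in $A$ and $B$ respectively.

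First I would check that $A_0B_0$ is actually a subgroup. Since $A$ normalizes $B$, conjugating by $h$ gives that $hAh^{-1}$ normalizes $hBh^{-1}$; in particular $A_0 \subseteq hAh^{-1}$ normalizes $hBh^{-1}$. The inclusion $A_0 \subseteq A$ also gives that $A_0$ normalizes $B$. Hence $A_0$ normalizes $B \cap hBh^{-1} = B_0$, so $A_0B_0$ is a subgroup. Plainly $A_0B_0 \subseteq AB$, and since $A_0B_0 \subseteq (hAh^{-1})(hBh^{-1}) = h(AB)h^{-1}$, we obtain
\[
A_0 B_0 \leq AB \cap h(AB)h^{-1}.
\]

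It then remains to show $[AB : A_0B_0] < \infty$, which I would do by inserting the intermediate subgroup $A_0B$ (a subgroup since $A_0$ normalizes $B$) and bounding each of the two indices $[AB : A_0B]$ and $[A_0B : A_0B_0]$. The map $A \to AB/(A_0B)$ given by $a \mapsto a(A_0B)$ is surjective, and two elements $a,a'$ of $A$ have the same image exactly when $a^{-1}a' \in A \cap A_0B$. A short check, using that $A$ normalizes $B$, identifies $A \cap A_0B$ with $A_0(A\cap B)$; this subgroup contains $A_0$, so $[AB : A_0B] \leq [A : A_0] < \infty$. An analogous computation for the map $B \to (A_0B)/(A_0B_0)$, $b \mapsto (A_0B_0)b$, gives $B \cap A_0B_0 = (A_0\cap B)B_0 \supseteq B_0$, whence $[A_0B : A_0B_0] \leq [B : B_0] < \infty$. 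Multiplying these two bounds yields $[AB : A_0B_0] < \infty$, and therefore $AB \cap h(AB)h^{-1}$ is of finite index in $AB$, as required.

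The calculations involved are all routine; the only place that genuinely uses the hypothesis that $A$ normalizes $B$ is the verification that $A_0$ normalizes $B_0$, and that is where one must invoke both $A \subseteq N_H(B)$ and $hAh^{-1} \subseteq N_H(hBh^{-1})$. Everything else is coset-counting that cleanly separates the $A$- and $B$-commensurations of $h$.
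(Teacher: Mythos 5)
Your proof is correct. Every step checks out: $A_0$ normalizes $B_0$ because it sits inside both $N_H(B)$ and $N_H(hBh^{-1})$; the containment $A_0B_0\leq AB\cap h(AB)h^{-1}$ is immediate; and the index bounds $[AB:A_0B]\leq[A:A\cap A_0B]\leq[A:A_0]$ and $[A_0B:A_0B_0]\leq[B:B\cap A_0B_0]\leq[B:B_0]$ are valid (the identifications $A\cap A_0B=A_0(A\cap B)$ and $B\cap A_0B_0=(A_0\cap B)B_0$ are right, though as you note only the containments $A_0\subseteq A\cap A_0B$ and $B_0\subseteq B\cap A_0B_0$ are actually needed). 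Since you prove finiteness of $[AB:AB\cap h(AB)h^{-1}]$ for \emph{every} $h$, applying the result to $h^{-1}$ and conjugating also handles the other side of commensurability, so nothing is missing. Your route differs from the paper's: the paper works with the equivalent characterization that $AB$ is commensurated iff every left $AB$-orbit on $H/AB$ is finite, and shows the orbit of $gAB$ is contained in the finite set $A\cdot(BgAB)$ by pushing finiteness through the identities $BA=AB$ and $BgB=FB$. Your argument instead exhibits an explicit finite-index subgroup $A_0B_0$ of $AB$ inside the intersection, via the chain $A_0B_0\leq A_0B\leq AB$. The two are of course computing the same quantity (the orbit of $h(AB)$ has size $[AB:AB\cap h(AB)h^{-1}]$), but yours is more elementary and makes the bound $[AB:AB\cap h(AB)h^{-1}]\leq[A:A_0][B:B_0]$ explicit, whereas the paper's is shorter once the orbit formulation is in hand and avoids verifying that various products are subgroups.
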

\begin{proof}
	Let $g\in G$ and consider $X:=BgAB\subseteq G/AB$. Since $B$ is commensurated, we find a finite set $F\subseteq G$ such that $BgB=FB$. We deduce that 
	\[
	X=BgAB=BgBA=FBA=FAB,
	\]
	so $X$ is a finite subset of $G/AB$. Since $A$ is commensurated, the set $AFA\subseteq G/A$ is finite, and thus, $AX$ is also finite. We see that $BAX=ABX=AX$ is a finite $AB$-orbit containing $gAB$. Since $g$ was arbitrary, we deduce that every left $AB$-orbit on $G/AB$ is finite, so $AB$ is commensurated. 
\end{proof}
\begin{lem}[Folklore]\label{lem:commensurated_closure}
	For $H$ a topological group, if $A$ is a commensurated subgroup, then $\ol{A}$ is also commensurated.
\end{lem}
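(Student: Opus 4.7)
The plan is to fix $g \in H$ and verify directly that $g\overline{A}g^{-1} \cap \overline{A}$ has finite index in $\overline{A}$. Since $A$ is commensurated in $H$, I first choose finitely many coset representatives $a_1,\ldots,a_n \in A$ such that
\[
A = \bigcup_{i=1}^n a_i\bigl(gAg^{-1} \cap A\bigr).
\]

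Next, I would pass to closures, using three elementary facts: closure commutes with finite unions, left multiplication by $a_i$ is a homeomorphism of $H$, and conjugation by $g$ is a homeomorphism so that $\overline{gAg^{-1}} = g\overline{A}g^{-1}$. Combining these with the general inclusion $\overline{X \cap Y} \subseteq \overline{X} \cap \overline{Y}$ yields
\[
\overline{A} \;=\; \bigcup_{i=1}^n a_i\,\overline{gAg^{-1} \cap A} \;\subseteq\; \bigcup_{i=1}^n a_i\bigl(g\overline{A}g^{-1} \cap \overline{A}\bigr).
\]
Since each $a_i$ lies in $\overline{A}$ and $g\overline{A}g^{-1} \cap \overline{A}$ is a subgroup of $\overline{A}$, the right-hand side exhibits $\overline{A}$ as a union of at most $n$ cosets of that subgroup. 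Hence the index is finite, and since $g$ was arbitrary, $\overline{A}$ is commensurated in $H$.

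There is no serious obstacle here; the argument is a short manipulation of closures. The only point that warrants care is that closure does not commute with intersection in general, so one must verify that the needed inclusion runs in the direction $\overline{gAg^{-1} \cap A} \subseteq g\overline{A}g^{-1} \cap \overline{A}$. This is precisely the correct direction for bounding the index of $g\overline{A}g^{-1} \cap \overline{A}$ in $\overline{A}$ from above, so the proof goes through cleanly using only the one-sided form of the commensuration condition adopted in the paper.
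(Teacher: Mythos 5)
Your proof is correct and follows essentially the same route as the paper's: cover $A$ by finitely many translates coming from the commensuration condition, pass to closures using that left multiplication and conjugation are homeomorphisms, and read off the finite index. The paper merely drops the intersection before taking closures (writing $A\subseteq\bigcup F g A g^{-1}$ directly), whereas you keep it and invoke $\overline{X\cap Y}\subseteq\overline{X}\cap\overline{Y}$; both are fine.
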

\begin{proof}
	Taking $x\in H$, we may find a finite set $F\subseteq A$ such that $A\subseteq \bigcup FxAx^{-1}$. It follows $\ol{A}\subseteq \bigcup Fx\ol{A}x^{-1}$. We conclude that $\ol{A}\cap x\ol{A}x^{-1}$ has finite index in $\ol{A}$, hence $\ol{A}$ is commensurated.
\end{proof}

We shall need a more specialized version of Lemma~\ref{lem:commensurated_and_normal}. To prove the desired result, we make use of the following theorem independently proved by G.\ Schlichting \cite{schlich-comple} and G.M.\ Bergman and H.W.\ Lenstra \cite[Theorem 6]{BL89}. When $K$ satisfies the following theorem, we say that $K$ \textbf{boundedly commensurates} $H$.

\begin{thm} \label{thm:BL}
Let $G$ be a group with subgroups $H$ and $K$. Then the following are equivalent:
\begin{enumerate}[(1)]
\item The supremum $\sup_{k\in K}|H:H\cap kHk^{-1}|$ is finite.
\item There is $N\leq G$ commensurable with $H$ such that $K$ normalizes $N$.
\end{enumerate}
\end{thm}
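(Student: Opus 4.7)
The implication $(2)\Rightarrow(1)$ is routine. Set $M:=H\cap N$ and let $m:=\max(|H:M|,|N:M|)$, both finite. For every $k\in K$, since $K$ normalizes $N$, we have $kMk^{-1}=kHk^{-1}\cap N$, which has index $|N:M|\le m$ in $N$. Hence $|N:M\cap kMk^{-1}|\le m^2$, and therefore $|M:M\cap kMk^{-1}|\le m$. Consequently
\[
|H:H\cap kHk^{-1}|\le|H:M\cap kMk^{-1}|=|H:M|\cdot|M:M\cap kMk^{-1}|\le m^2,
\]
uniformly in $k\in K$.

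The implication $(1)\Rightarrow(2)$ is the substantive direction. Let $n:=\sup_{k\in K}|H:H\cap kHk^{-1}|<\infty$. First I would symmetrize the bound: since $K$ is a subgroup, $k^{-1}\in K$ for every $k\in K$, so conjugating the inequality $|H:H\cap k^{-1}Hk|\le n$ by $k$ yields $|kHk^{-1}:H\cap kHk^{-1}|\le n$. Thus each conjugate $kHk^{-1}$ lies in the commensurability class of $H$ with both commensurability indices bounded by $n$.

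The plan is then to identify a $K$-invariant ``centre'' of the orbit $\Omega:=\{kHk^{-1}:k\in K\}$ in the lattice of subgroups of $G$. Equip $\Omega$ with the pseudo-distance $d(L,L'):=\max(|L:L\cap L'|,|L':L\cap L'|)$; a standard iterated-index estimate yields the submultiplicative triangle inequality $d(L,L'')\le d(L,L')\cdot d(L',L'')$, so that $\Omega$ has $d$-diameter at most $n^2$. Next, view $G$ inside $\Sym(G/H)$ via left translation: the hypothesis asserts that for each $k\in K$ the $H$-orbit of $kH$ has size $|H:H\cap kHk^{-1}|\le n$, so the $K$-invariant subset $KH/H\subseteq G/H$ is a disjoint union of $H$-orbits of size at most $n$. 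Using this bounded-orbit structure, I would extract $N$ as a canonical $K$-invariant object, either as the intersection of a carefully chosen finite $K$-invariant subfamily of $\Omega$, or as the $G$-stabilizer of a $K$-invariant function on the $H$-orbit structure of $KH/H$.

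The main obstacle is the last step: producing a subgroup that is simultaneously commensurable with $H$ \emph{and} normalized by $K$. Both properties must be verified from the same construction, and both rely crucially on the uniform bound $n$ (together with its symmetrized version from Step~1). This is the technical heart of the theorem of Schlichting \cite{schlich-comple} and Bergman--Lenstra \cite{BL89}; the details follow their construction and need not be reproduced here.
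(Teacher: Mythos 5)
The paper offers no proof of this statement: it is quoted as a theorem of Schlichting and of Bergman--Lenstra, with only a citation to \cite{schlich-comple} and \cite{BL89}, so there is no argument in the paper to measure yours against. Your proof of $(2)\Rightarrow(1)$ is correct and complete: with $M=H\cap N$, the bound $|M:M\cap kMk^{-1}|\le |N:kMk^{-1}|=|N:M|\le m$ holds because distinct cosets of $M\cap kMk^{-1}$ in $M$ determine distinct cosets of $kMk^{-1}$ in $N$, and $M\cap kMk^{-1}\le H\cap kHk^{-1}$ then gives the uniform bound $m^2$. For the substantive direction $(1)\Rightarrow(2)$ you correctly symmetrize the bound and set up the orbit of conjugates with its submultiplicative index pseudo-metric, but you then only gesture at two possible constructions of $N$ before explicitly deferring to the cited sources; as a standalone proof this direction is therefore incomplete. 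Since the paper itself treats the theorem as a black box, this deferral matches the paper's own treatment, but be aware that the actual construction (e.g.\ Bergman--Lenstra's, which takes $N$ to be the set of elements whose left action on the $K$-orbit of $H$ in $G/H$ is suitably controlled, or equivalently a stabilizer extracted from the bounded $H$-orbit structure you describe) is where all the work lies.
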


In the setting of locally compact groups, the Baire category theorem allows us to obtain bounded commensuration relatively easily; the next lemma indeed holds for any Hausdorff topological group in which the Baire category theorem holds  - i.e.\ a \textit{Baire group}.

\begin{lem} \label{lem-cpt-boundedly} 
	Let $G$ be a locally compact group with $H \leq G$ a closed subgroup. If $K$ is a compact subgroup of $G$ that commensurates $H$, then $K$ boundedly commensurates $H$. 
\end{lem}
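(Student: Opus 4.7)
The plan is to verify the Bergman--Lenstra--Schlichting criterion directly by bounding $n(k) := |H : H \cap kHk^{-1}|$ uniformly over $k \in K$. The strategy is Baire category: I will write $K = \bigcup_{n \geq 1} K_n$ where $K_n := \{k \in K : n(k) \leq n\}$, using the fact that $K$ commensurates $H$, and show that each $K_n$ is closed in $K$. Since $K$ is compact and hence Baire, some $K_N$ will then have non-empty interior in $K$.

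The first step will be to prove closedness of $K_n$. I will use the characterization that $n(k) > n$ exactly when there exist $h_0, \dots, h_n \in H$ that are pairwise inequivalent modulo $H \cap kHk^{-1}$, i.e., with $k^{-1} h_i^{-1} h_j k \notin H$ for all $i \neq j$. For each fixed tuple $(h_0,\dots,h_n) \in H^{n+1}$, this is an open condition on $k$ because $H$ is closed in $G$ and conjugation is continuous, so $K \setminus K_n$ is a union of open sets. This is the main place where closedness of $H$ (rather than of some larger set) enters, and I expect it to be the most delicate point of the argument.

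Next, I will establish the submultiplicative inequality $n(k'k) \leq n(k') n(k)$. Setting $L := H \cap kHk^{-1}$, the subgroup $k'Lk'^{-1}$ lies in both $k'Hk'^{-1}$ and $(k'k)H(k'k)^{-1}$, so intersecting with $H$ gives
\[
n(k'k) \leq |H : H \cap k'Lk'^{-1}| = |H : H \cap k'Hk'^{-1}| \cdot |H \cap k'Hk'^{-1} : H \cap k'Lk'^{-1}| \leq n(k') \cdot n(k),
\]
where the last estimate uses that the second factor is bounded by $|k'Hk'^{-1} : k'Lk'^{-1}| = n(k)$. This is a routine index calculation.

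Finally, once $K_N$ has non-empty interior, I will pick $k_0 \in K_N$ and an open identity neighborhood $U$ of $K$ with $k_0 U \subseteq K_N$. Compactness of $K$ gives a finite cover $K = \bigcup_{i=1}^m k_i U$. For any $k = k_i u \in K$, the decomposition $k = (k_i k_0^{-1})(k_0 u)$ together with the submultiplicative bound yields $n(k) \leq n(k_i k_0^{-1}) \cdot N$, since $k_0 u \in K_N$. Taking $M := \max_i n(k_i k_0^{-1})$, which is finite because $K$ commensurates $H$, yields the uniform bound $n(k) \leq MN$ for every $k \in K$, which is exactly bounded commensuration.
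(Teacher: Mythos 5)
Your proof is correct and follows essentially the same strategy as the paper: decompose $K$ into the closed sets of bounded commensuration index, apply Baire category and compactness, and conclude via submultiplicativity of the index. The only cosmetic differences are that you prove closedness by exhibiting the complement as a union of open conditions (the paper uses a net/subnet limit argument) and you work with the one-sided index plus an explicit submultiplicativity estimate, whereas the paper uses the symmetrized sets $\Omega_n$ and the inclusion $\Omega_m\Omega_n\subseteq\Omega_{mn}$.
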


\begin{proof}
	For $n \geq 1$, consider the set
	\[
	\Omega_n:=\{u\in K\mid |H:H\cap uHu^{-1}|,|H:H\cap u^{-1}Hu|\sleq n\}.
	\] 
	Say $u_k\in \Omega_n$ is a net such that $u_k \rightarrow u\in K$ and fix $c_0,\dots,c_n \in H$. Since $u_k \in \Omega_n$, for every $k$ we may find $i_{k} \neq j_{k}$ such that $c_{i_{k}}c_{j_{k}}^{-1}\in u_k H u_k^{-1}$. By passing to a subnet, there exist $i \neq j$ such that $c_{i}c_{j}^{-1}\in u_k H u_k^{-1}$ for every $k$. Taking the limit as $k$ goes to infinity, we obtain that $c_{i}c_{j}^{-1}\in u H u^{-1}$. We have thus proved that the index $|H:H\cap uHu^{-1}|$ is at most $n$, and by applying the same argument to $u^{-1}$, we obtain that $|H:H\cap u^{-1}Hu|$ is also at most $n$. Therefore, $u \in \Omega_n$, and $\Omega_n$ is closed.  
	
	On the other hand, $K=\bigcup_{n\in \Nb}\Omega_n$ since $K$ commensurates $H$, so by the Baire category theorem, there is some $n$ such that $\Omega_n$ has non-empty interior. The compactness of $K$ implies that there is a finite set $F\subseteq K$ such that $K\subseteq \bigcup F\Omega_n$. We may find $m$ where $F\subseteq \Omega_m$, hence $K\subseteq \Omega_m\Omega_n$. An easy calculation now shows that $\Omega_m\Omega_n\subseteq \Omega_{mn}$, and thus, $K=\Omega_{mn}$.
\end{proof}

\begin{prop}\label{prop:contain U} 
Let $G$ be a \tdlc group with $H \leq G$ a closed commensurated subgroup. If $H$ is locally elliptic, then for every compact open subgroup $U \leq G$, there exists a commensurated, locally elliptic subgroup $H'$ such that $H'$ contains $U$ and $|H:H'\cap H|<\infty$.
\end{prop}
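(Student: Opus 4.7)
The plan is to build $H'$ via the Schlichting--Bergman--Lenstra theorem and then verify the four required properties: that $H'$ contains $U$, that $H' \cap H$ has finite index in $H$, that $H'$ is commensurated in $G$, and that $H'$ is locally elliptic. Since $U \sleq G$ trivially commensurates $H$ and $U$ is compact, Lemma~\ref{lem-cpt-boundedly} yields that $U$ boundedly commensurates $H$. An application of Theorem~\ref{thm:BL}, together with the standard Schlichting-completion construction intersecting the finite $U$-conjugacy class of $H$, then supplies a closed subgroup $N$ with $N \sleq H$, $[H:N] < \infty$, and $U \sleq N_G(N)$. Set
\[
H' := NU.
\]
Since $U$ normalizes $N$, $H'$ is a subgroup, and since $N$ is closed and $U$ is compact, $H'$ is closed in $G$. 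Clearly $U \sleq H'$ and $N \sleq H' \cap H$, so $[H : H' \cap H] \sleq [H : N] < \infty$.

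For commensuration, observe that $N$ is commensurated (being commensurable with the commensurated subgroup $H$) and $U$ is commensurated (as a compact open subgroup of $G$). Because $U$ normalizes $N$, Lemma~\ref{lem:commensurated_and_normal} applies and yields that $H' = NU$ is commensurated in $G$.

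The substantive step is verifying that $H'$ is locally elliptic; my plan is to exhibit $H'$ as a directed union of compact open subgroups. For a finite set $F \subseteq N$, the set $\Omega_F := \{ufu^{-1}\mid u \in U,\, f \in F\}$ is compact, being the continuous image of $U \times F$. Since $N$ is a \tdlc locally elliptic group, it is the directed union of its compact open subgroups, so $\Omega_F$ is contained in some compact open subgroup of $N$. Hence $M_F := \overline{\langle \Omega_F \rangle}$ is a closed subgroup of a compact group, so $M_F$ is compact; it is manifestly $U$-invariant. The group $L_F := M_F U$ is then a subgroup of $H'$ (since $U$ normalizes $M_F$), compact as the continuous image of $M_F \times U$, and open in $H'$ because it contains $U$. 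For $F_1 \subseteq F_2$ one has $M_{F_1} \sleq M_{F_2}$ and hence $L_{F_1} \sleq L_{F_2}$, so $\{L_F\}_{F}$ is a directed family of compact open subgroups of $H'$ whose union contains both $N$ and $U$, hence equals $H'$. The main obstacle in this plan is precisely this local ellipticity step: the Schlichting--Bergman--Lenstra machinery handles the commensuration bookkeeping cleanly, but verifying that $H' = NU$ is locally elliptic requires averaging finite subsets of $N$ over $U$-conjugation into compact $U$-invariant subgroups, which crucially relies on the characterization of \tdlc locally elliptic groups as directed unions of their compact open subgroups.
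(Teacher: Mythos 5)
Your proof follows essentially the same route as the paper's: bounded commensuration of $H$ by $U$ via Lemma~\ref{lem-cpt-boundedly}, then Theorem~\ref{thm:BL} to produce a subgroup $N$ commensurable with $H$ and normalized by $U$, then $H' := NU$, with Lemma~\ref{lem:commensurated_and_normal} handling commensuration and $N\cap H\leq H'\cap H$ handling the finite-index condition. Two remarks. First, Theorem~\ref{thm:BL} only supplies $N$ \emph{commensurable} with $H$; your stronger claim that $N$ may be taken closed, contained in $H$, and of finite index in $H$ --- justified by ``intersecting the finite $U$-conjugacy class of $H$'' --- is not warranted, since bounded commensuration bounds the indices $|H:H\cap uHu^{-1}|$ but does not make the set of conjugates $\{uHu^{-1}\mid u\in U\}$ finite. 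This is harmless: with $N$ merely commensurable with $H$ one still gets $|H:H'\cap H|\leq |H:N\cap H|<\infty$, and one may replace $N$ by $\overline{N}$ (still commensurable with $H$, still normalized by $U$) wherever closedness is needed. Second, for local ellipticity the paper simply invokes Platonov's theorem that local ellipticity is preserved under extensions, whereas you give a direct argument exhibiting $H'=NU$ as a directed union of compact open subgroups by averaging finite subsets of $N$ over $U$-conjugation into compact $U$-invariant subgroups $M_F$ and forming $L_F=M_FU$. That argument is correct once $N$ is replaced by its closure --- the directed-union characterization of locally elliptic groups that you invoke requires a closed, hence locally compact, subgroup --- and it amounts to a self-contained proof of exactly the special case of Platonov's extension result that is needed here.
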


\begin{proof}
Since $U$ is compact and $H$ is closed and commensurated, Lemma \ref{lem-cpt-boundedly} implies that $U$ boundedly commensurates $H$. We are therefore in position to apply Theorem~\ref{thm:BL} to obtain a subgroup $L$ commensurable with $H$ such that $U \sleq N_G(L)$. 

We claim $H': = LU$ satisfies the proposition. Since $L$ is commensurable with $H$, the subgroup $L$ is locally elliptic, and thus, $H'$ is locally elliptic since being locally elliptic is stable by extension \cite{P_66}. Lemma \ref{lem:commensurated_and_normal} ensures $H'$ is commensurated, and $|H:H'\cap H|<\infty$, since $H'$ contains $H \cap L$ which has finite index in $H$.
\end{proof}

We finally require a deep result on commensurated subgroups of simple groups.

\begin{thm}[Caprace--Reid--Willis, {\cite[Theorem 3.9]{CRW_2_13}}] \label{thm-CRW2-finite-open}
	Suppose that $G$ is a compactly generated \tdlc group. If $G$ is abstractly simple, then every compact commensurated subgroup of $G$ is either finite or open.
\end{thm}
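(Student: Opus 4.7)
The plan is to combine the Schlichting completion with the abstract simplicity of $G$ via a normal closure dichotomy. First I would analyze the core $N := \bigcap_{g \in G} gKg^{-1}$, a normal subgroup of $G$ contained in the compact set $K$. By abstract simplicity, either $N = \{1\}$ or $N = G$; in the latter case $G = K$ is compact, and since any nontrivial profinite group has a base of identity neighborhoods of proper open normal subgroups, abstract simplicity forces $G$ to be finite, so $K = G$ is both finite and open. Hence I may assume $N = \{1\}$ and $K$ nontrivial, with the goal of showing $K$ is open.

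Next, fix a compact open subgroup $V \leq G$. By Lemma~\ref{lem-cpt-boundedly}, $V$ boundedly commensurates $K$, so Theorem~\ref{thm:BL} yields a subgroup $L$ commensurable with $K$ which is normalized by $V$. A short closure argument shows $\overline{L}$ is compact and commensurable with $K$, and replacing $K$ by $\overline{L}$ (commensurability preserves the ``finite or open'' dichotomy for compact subgroups), I may assume $K \trianglelefteq M$ where $M := VK$ is a compact open subgroup of $G$. Consider the continuous injective homomorphism $\rho\colon G \to \mathrm{Sym}(G/M)$ coming from left translation---injectivity follows from $\mathrm{Core}_G(M) = \{1\}$, since otherwise $G = M$ is compact and we fall back to the finite case. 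Let $H := \overline{\rho(G)}$ inside $\mathrm{Sym}(G/M)$, a Polish t.d.l.c. group since $G/M$ is countable. The stabilizer $\hat M \leq H$ of the basepoint is compact open and contains $\rho(M)$ densely, so $\hat K := \overline{\rho(K)}$ is a closed normal subgroup of $\hat M$, hence compact. Using density of $\rho(G)$ in $H$ together with the factorization $H = \rho(G)\hat M$, one checks that $\hat K$ is commensurated in all of $H$.

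To finish, consider the closed normal subgroup $\overline{\langle \hat K^H \rangle}$ of $H$. Its preimage under $\rho$ is normal in $G$; by abstract simplicity it is $\{1\}$ or $G$. If $\{1\}$, then $K = \{1\}$, contradicting nontriviality. Otherwise $\rho(G) \subseteq \overline{\langle \hat K^H \rangle}$, and density forces $\overline{\langle \hat K^H \rangle} = H$. Combined with continuity of $\rho$, it then suffices to show $\hat K$ is open in $H$, which would give openness of $K$ in $G$.

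The main obstacle lies in this final openness step: deducing that a compact commensurated subgroup of a compactly generated t.d.l.c. group whose normal closure is the whole group must be open. This is the structural heart of the theorem and appears to require the theory of locally normal subgroups and the structure lattice developed by Caprace--Reid--Willis. A direct automatic-continuity argument, say via Pettis's theorem applied to the full Schlichting map $G \hookrightarrow \overline{\rho(G)} \subseteq \mathrm{Sym}(G/K)$, is hindered by the fact that when $K$ is not open, preimages under $\rho$ of identity neighborhoods are compact subgroups with empty interior, hence meager, precluding the Baire-category hypotheses that Pettis-type theorems rely on.
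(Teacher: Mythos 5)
This statement is quoted in the paper as an external result of Caprace--Reid--Willis (\cite[Theorem 3.9]{CRW_2_13}); the paper gives no proof of it, so there is nothing internal to compare your argument against. Judged on its own terms, your proposal is not a proof: it consists of preliminary reductions followed by an explicit concession that the central step is out of reach. The reductions themselves are essentially sound --- the core argument disposing of the case $N=G$, the use of Lemma~\ref{lem-cpt-boundedly} and Theorem~\ref{thm:BL} to replace $K$ by a commensurable compact subgroup normal in a compact open subgroup $M$ (and the observation that the finite/open dichotomy is invariant under commensurability of compact subgroups), and the passage to the closure of $\rho(G)$ in $\mathrm{Sym}(G/M)$. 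Note in passing that this last step is degenerate: since $\rho\rest_M$ is a continuous injection of a compact group, $\rho(M)$ is already closed, so $\hat M=\rho(M)$ and $\hat K=\rho(K)$, and the completion buys you nothing beyond the original situation of a nontrivial compact subgroup, normal in a compact open subgroup, commensurated in $G$, whose normal closure is all of $G$.

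That residual situation is precisely the theorem, and it is where all the difficulty lives. Deducing openness there genuinely requires the machinery of \cite{CRW_2_13} and its companion paper: one must view $K$ as a (nontrivial, commensurated) locally normal subgroup, invoke the fact that a compactly generated topologically simple \tdlc group has trivial quasi-centre and no nontrivial abelian locally normal subgroups, and then run the structure-lattice/centralizer-lattice analysis to show that a commensurated element of the structure lattice is fixed by $G$ and hence, by the local decomposition theory together with abstract simplicity, must be the maximal element --- i.e.\ $K$ is open. None of this is reconstructible from commensuration formalism, Schlichting completions, or Baire-category arguments alone (as you correctly observe, Pettis-type arguments fail because a non-open compact subgroup is meagre). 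So the gap is not a technical omission but the entire substance of the result; if you need this theorem you should cite it, as the paper does, rather than attempt to reprove it.
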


\section{Translations, elliptic elements, and elliptic subgroups}

A subgroup of $\aaut$ containing no translations will be called \textbf{elliptic}. This paragraph consists of a technical study of elliptic subgroups, which leads to a complete characterization of these.

For $g\in\aaut$, let $\mc{P}$ be an admissible partition and define
\[
\Omega_{\mc{P},g}:=\{Q\in g.\mc{P}\mid \exists \; B\in \mc{P}\text{ for which }B\subsetneq Q\}
\]
If $\Omega_{\mc{P},g}=\emptyset$, then each $Q\in g.\mc{P}$ is a subset of some $B\in \mc{P}$. Since $|\mc{P}|=|g.\mc{P}|$, we deduce that $\mc{P}=g.\mc{P}$, which implies that $g$ is elliptic. The set $\Omega_{\mc{P},g}$ is thus the failure of $\mc{P}=g.\mc{P}$. The goal of this subsection is to show that given an translation-free subgroup $K$, we can always refine a partition $\mc{P}$ to a partition $\mc{Q}$ with $\Omega_{\mc{Q},g}=\emptyset$ for every $g\in K$. 

We shall need a tool to refine partitions in a precise way. Let $Y$ be a set and $\mc{Z}\subseteq \ms{P}(Y)$, with $\ms{P}(Y)$ the power set of $Y$. We define $Z\in \min(\mc{Z})$ if and only if $Z\in \mc{Z}$ and for all $Z'\in \mc{Z}$ if $Z'\subseteq Z$, then $Z'=Z$. That is to say, $\min(\mc{Z})$ is the collection of $\subseteq$-minimal elements of $\mc{Z}$. For finite sets $\mc{Z}$, the set $\min(\mc{Z})$ is always non-empty.

Suppose that $A\subseteq \Sym(Y)$ is a collection of permutations. Each $a\in A$ acts on $\ms{P}(Y)$, and for $Z\in \ms{P}(Y)$, we denote this action by $a(Z)$. Given $R\subseteq Y$ and $\mc{X}\subseteq \ms{P}(Y)$, we now define the desired function:
\[
\Theta_A(R,\mc{X}):= \begin{cases}
\min\left\{ a(Z)\mid Z\in \mc{X}\text{ and }a(Z)\subseteq R\right\} &\mbox{if there is some } a(Z)\subseteq R; \\ 
\{R\} & \mbox{else. }
\end{cases} 
\]
The function $\Theta_A$ on input $(R,\mc{X})$ outputs either the set of minimal elements of the form $a(Z)$ with $Z\in \mc{X}$ and $a\in A$ contained in $R$ or the set $\{R\}$.

We now demonstrate how to use $\Theta_A$ to refine partitions. 

\begin{rmk} To gain intuition for the definitions and argumentation, it may be helpful to first consider the case of $K=\{1\}$ in the following. This case is enough to prove Proposition~\ref{prop:char_1_elliptic}.
\end{rmk}

\begin{lem}\label{lem:refine_K}
	Suppose that $\mc{P}$ is a regular partition admissible for a relatively compact group $K$. If $\mc{P}'\sleq \mc{P}$ is a regular partition, then $K.\mc{P'}:=\{k(B)\mid B\in \mc{P}'\text{ and }k\in K\}$ is a finite set and $\min(K.\mc{P'})$ is a regular partition.
\end{lem}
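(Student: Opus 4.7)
The plan is to handle the two assertions separately, extracting all the content from the ratio structure of homotheties and the nested-or-disjoint structure of balls in $\bord$.

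For finiteness of $K.\mc{P}'$, I would fix $B\in\mc{P}'$ and show that $K.B$ is a finite set of balls. Since $\mc{P}'\sleq\mc{P}$ and the latter is admissible for $K$, the partition $\mc{P}'$ is itself admissible for $K$ (by the paper's earlier remark that refinements of admissible partitions remain admissible), so each $k\in K$ restricts to $B$ as a homothety with some ratio $\lambda_{k,B}\in d^{\Zb}$. An upper bound $\lambda_{k,B}\le 1/\diam(B)$ is immediate from $\diam(k(B))\le\diam(\bord)=1$. For a positive lower bound, I argue by contradiction: if some sequence $k_n\in K$ satisfied $\lambda_{k_n,B}\to 0$, relative compactness would yield a subnet $k_{n_\alpha}$ converging to some $g\in\ol{K}\subseteq\aaut$, and continuity of the action on $\bord$ would give
\[
\mathrm{dist}(g(x),g(y))=\lim_\alpha\lambda_{k_{n_\alpha},B}\cdot\mathrm{dist}(x,y)=0
\]
for all $x,y\in B$, forcing $g$ to collapse $B$ to a point and contradicting the injectivity of $g$. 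Hence $\{\lambda_{k,B}:k\in K\}$ is a bounded, and therefore finite, subset of $d^{\Zb}$. Since $\bord$ has only finitely many balls of each given diameter, $K.B$ is finite, and taking the union over $B\in\mc{P}'$ yields finiteness of $K.\mc{P}'$.

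For the partition claim, I would identify $\min(K.\mc{P}')$ with the common refinement $\mc{R}$ of the collection $\{k.\mc{P}':k\in K\}$. Each $k.\mc{P}'$ is a regular partition of $\bord$ ($k$ is a homeomorphism and the admissibility of $\mc{P}'$ for $k$ makes the images balls), and the collection is finite since each $k.\mc{P}'$ is a subset of the finite set $K.\mc{P}'$. Because any two balls of $\bord$ are either disjoint or nested, a non-empty intersection of one piece from each $k.\mc{P}'$ equals the smallest of these pieces and is again a ball; so $\mc{R}$ is a regular partition. For $\mc{R}\subseteq\min(K.\mc{P}')$: if $R\in\mc{R}$ admitted some $R'\in K.\mc{P}'$ with $R'\subsetneq R$, then choosing $k\in K$ with $R'\in k.\mc{P}'$ and noting that $R$ lies in the unique piece $B^{(k)}\in k.\mc{P}'$ containing it, the two pieces $R'$ and $B^{(k)}$ of the partition $k.\mc{P}'$ with $R'\subseteq B^{(k)}$ must coincide, forcing $R'=B^{(k)}\supseteq R$ and contradicting $R'\subsetneq R$. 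For the reverse inclusion: any $M\in\min(K.\mc{P}')$ cannot strictly contain a piece of any $k.\mc{P}'$, so for each $k$ it is contained in a unique piece $B^{(k)}\in k.\mc{P}'$, placing $M\subseteq\bigcap_k B^{(k)}\in\mc{R}$; choosing $k_0$ with $M\in k_0.\mc{P}'$ then forces $B^{(k_0)}=M$, so $\bigcap_k B^{(k)}\subseteq M$, giving $M\in\mc{R}$.

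I expect the lower bound on the ratios $\lambda_{k,B}$ to be the main obstacle: this is the only step where relative compactness of $K$ is genuinely used, and the contradiction requires the limit of the convergent subnet to land in $\aaut$ so that its injectivity as a homeomorphism can be invoked. Once finiteness is established, the partition claim reduces to a clean combinatorial identification with the common refinement of finitely many partitions, relying only on the ball-nesting structure on $\bord$.
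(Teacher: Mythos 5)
Your proof is correct, and both halves go through; the difference from the paper lies in how finiteness of $K.\mc{P}'$ is obtained. The paper's argument is a one-line orbit--stabilizer count: for each $B\in\mc{P}'$ the subgroup $\Stab(B)$ is open in $\aaut$, so $\Stab_K(B)$ has finite index in the relatively compact group $K$, and $K.B$ injects into $K/\Stab_K(B)$. You instead bound the set of homothety ratios $\lambda_{k,B}\in d^{\Zb}$ from above trivially and from below by a subnet/compactness argument (a limit with ratio $0$ would collapse $B$, contradicting injectivity of the limiting homeomorphism), then count balls of each admissible diameter. Both routes genuinely use relative compactness; the paper's is shorter and avoids any analysis of limits in $\aaut$ (your version tacitly needs that convergence in $\aaut$ implies pointwise convergence on $\bord$, which is true but worth a word), while yours has the mild advantage of making explicit the metric mechanism --- that the orbit of $B$ is constrained to finitely many diameters. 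For the second assertion the paper simply declares it "an easy exercise"; your identification of $\min(K.\mc{P}')$ with the common refinement of the finitely many partitions $k.\mc{P}'$, using that balls of $\bord$ are nested or disjoint, is a complete and clean way to carry out that exercise.
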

\begin{proof}
	For each $B\in \mc{P}'$, the subgroup $\Stab(B)$ is open, so since $K$ is relatively compact, $K/\Stab_K(B)$ is finite. The set $K.B:=\{k(B)\mid k\in K\}$ has a natural bijection with $K/\Stab_K(B)$, and thus, $K.B$ is finite. It now follows that $K.\mc{P}'$ is finite. 
	
	The set $K.\mc{P}'$ consists of balls since $\mc{P}'$ is admissible for each $k\in K$. That $\min(K.\mc{P}')$ is a partition is now an easy exercise.
\end{proof}

\begin{lem}\label{lem:delta_refine} Suppose that $g\in\aaut$, $K\leq \aaut$ is relatively compact, and $\mc{P}$ is an admissible partition for $g$ and $K$. If $\mc{P}'\leq \mc{P}$ is a regular partition, then the set
	\[
	\Delta(\mc{P}'):=\bigcup_{Q\in g.\mc{P'}}\Theta_K(Q,\mc{P}')
	\]
	enjoys the following properties:
	\begin{enumerate}
		\item $\Delta(\mc{P}')$ is a regular partition.
		\item $g^{-1}.\Delta(\mc{P}')$ is a regular partition refining $\mc{P}'$.
		\item Each $C\in g^{-1}.\Delta(\mc{P}')$ is either equal to some $B\in \mc{P'}$ or of the form $g^{-1}k(B)$ for some $k\in K$ and $B\in \mc{P}'$. 
	\end{enumerate}
\end{lem}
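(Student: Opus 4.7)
The approach is to reduce everything to Lemma~\ref{lem:refine_K} and the elementary ball-nesting property (any two balls in $\bord$ are either nested or disjoint). First I note that since $\mc{P}'$ refines $\mc{P}$, and $\mc{P}$ is admissible both for $g$ and for each element of $K$, the refined partition $\mc{P}'$ is also admissible for $g$ and for $K$. In particular $g.\mc{P}'$ is a regular partition of $\bord$, and Lemma~\ref{lem:refine_K} applies to give that $K.\mc{P}'$ is finite and $\min(K.\mc{P}')$ is a regular partition of $\bord$.

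For property (1), the plan is to show that for each fixed $Q \in g.\mc{P}'$, the set $\Theta_K(Q,\mc{P}')$ partitions $Q$; since $g.\mc{P}'$ partitions $\bord$, this will yield that $\Delta(\mc{P}')$ partitions $\bord$, and all its elements are balls by construction. Fix $Q \in g.\mc{P}'$. If no $k(Z) \subseteq Q$ exists then $\Theta_K(Q,\mc{P}') = \{Q\}$ trivially partitions $Q$. Otherwise, I claim that the minimal elements of $\mc{A}_Q := \{k(Z) \in K.\mc{P}' : k(Z)\subseteq Q\}$ coincide with those elements of $\min(K.\mc{P}')$ that sit inside $Q$: this is a short argument using that any $A' \in K.\mc{P}'$ strictly contained in $A \in \min(\mc{A}_Q)$ would also lie in $\mc{A}_Q$, contradicting minimality. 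The next step is to show these minimal pieces cover $Q$. Given $x \in Q$, find $A_x \in \min(K.\mc{P}')$ with $x \in A_x$. Since balls are nested or disjoint, $A_x$ either is contained in $Q$ or strictly contains $Q$. In the second case, any witness $k(Z) \subseteq Q$ descends through $K.\mc{P}'$ to a minimal element $A_0 \in \min(K.\mc{P}') \cap \mc{A}_Q$, and then $A_0 \subseteq Q \subsetneq A_x$ contradicts disjointness of distinct blocks of the partition $\min(K.\mc{P}')$. Hence $A_x \subseteq Q$, and $x$ lies in an element of $\Theta_K(Q,\mc{P}')$, proving the claim.

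Properties (3) and (2) then follow by unwrapping definitions. For (3), take $C \in g^{-1}.\Delta(\mc{P}')$, so $C = g^{-1}(D)$ with $D \in \Theta_K(Q,\mc{P}')$ for some $Q = g(B) \in g.\mc{P}'$. If the $\Theta_K$ computation fell into the trivial branch, $D = Q$ and $C = B \in \mc{P}'$; otherwise $D = k(Z)$ for some $k \in K$ and $Z \in \mc{P}'$, and $C = g^{-1}k(Z)$ has the prescribed form. For (2), in the first case $C = B \in \mc{P}'$ trivially, and in the second case $k(Z) \subseteq Q = g(B)$ gives $C = g^{-1}k(Z) \subseteq B \in \mc{P}'$. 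Thus every block of $g^{-1}.\Delta(\mc{P}')$ sits in a block of $\mc{P}'$, and it is a regular partition because $\Delta(\mc{P}')$ is a regular partition refining $g.\mc{P}'$, which is admissible for $g^{-1}$.

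The one place that needs care is the coverage step in part (1), i.e. checking that the $\subseteq$-minimal members of $\mc{A}_Q$ actually exhaust $Q$ rather than leaving a piece uncovered by a large ball of $\min(K.\mc{P}')$ that properly contains $Q$. Everything else is a straightforward book-keeping exercise once Lemma~\ref{lem:refine_K} supplies the partition $\min(K.\mc{P}')$.
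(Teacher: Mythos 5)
Your proof is correct and follows essentially the same route as the paper: reduce to Lemma~\ref{lem:refine_K}, identify $\Theta_K(Q,\mc{P}')$ in the nontrivial case with the blocks of $\min(K.\mc{P}')$ lying inside $Q$, and unwrap the definition of $\Theta_K$ for parts (2) and (3). The only difference is that you spell out the coverage argument (that the minimal balls inside $Q$ actually exhaust $Q$) in more detail than the paper, which simply asserts it from the nesting property of balls.
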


\begin{proof}
	The partition $\mc{P}'$ is again admissible for $g$, so $g.\mc{P}'$ is a regular partition. For the first claim, it suffices to show $\Theta_K(Q,\mc{P}')$ is a regular partition for each $Q\in g.\mc{P}'$. Fixing $Q\in g.\mc{P}'$, if $\Theta_K(Q,\mc{P}')=\{Q\}$, then there is nothing to prove. We thus suppose that there is $k(B)\subseteq Q$ for some $k\in K$ and $B\in \mc{P}'$. 
	
	Lemma~\ref{lem:refine_K} ensures that $\min(K.\mc{P'})$ is a regular partition. Since $Q$ is a ball and there is some $C\in \min(K.\mc{P}')$ contained in $Q$, it follows that $Q$ is partitioned by the elements of $\min(K.\mc{P}')$ contained in $Q$. That is to say, the elements of $\Theta_K(Q,\mc{P'})$ form a regular partition of $Q$. We deduce $(1)$. 
	
	By construction, $\Delta(\mc{P}')$ refines $g.\mc{P'}$, so since $g.\mc{P'}$ is admissible for $g^{-1}$, the partition $\Delta(\mc{P'})$ is admissible for $g^{-1}$. Hence, $g^{-1}.\Delta(\mc{P'})$ is a regular partition refining $\mc{P'}$, verifying $(2)$.
	
	For $(3)$, each $D\in \Delta(\mc{P'})$ is either equal to $g(B)$ for some $B\in \mc{P}'$ or of the form $k(B)$ for some $B\in \mc{P}'$ and $k\in K$. Each $C\in g^{-1}.\Delta(\mc{P'})$ is thus either equal to some $B\in \mc{P'}$ or of the form $g^{-1}k(B)$ for some $k\in K$ and $B\in \mc{P'}$. 
\end{proof}

Let us now prove the main technical theorem of this section. 

\begin{thm} \label{thm:main_ell_sgrps}
	Suppose that $g\in\aaut$, $K\leq \aaut$ is relatively compact, and $\mc{P}$ is admissible for $g$ and $K$. If $\grp{K,g}$ contains no translations, then there is a regular partition $\mc{Q}$ refining $\mc{P}$ such that $\grp{K,g}\leq \Stab(\mc{Q})$.
\end{thm}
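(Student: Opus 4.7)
The strategy is to iteratively refine $\mc{P}$ using the $\Delta$-operator of Lemma~\ref{lem:delta_refine}. Set $\mc{P}_0:=\mc{P}$ and $\mc{P}_{n+1}:=g^{-1}.\Delta(\mc{P}_n)$; by that lemma, each $\mc{P}_n$ is a regular partition admissible for $g$ and $K$ with $\mc{P}_{n+1}\sleq \mc{P}_n$. We will show that this sequence must stabilize, and that the terminal partition $\mc{P}_N$ is preserved by both $g$ and $K$; the desired $\mc{Q}$ is then $\mc{P}_N$.

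Termination is the main technical step, proved by contradiction. Suppose $\mc{P}_{n+1}\lneq \mc{P}_n$ for every $n$. Then $|\mc{P}_n|\to\infty$, so the minimum diameter of balls in $\mc{P}_n$ tends to $0$. Combining this with compactness of $\bord$ via a K\"onig-style argument on the tree of balls, we obtain a point $\xi\in\bord$ and balls $C_n\in \mc{P}_n$ with $\xi\in C_n$; after passing to a subsequence we may assume $C_{n+1}\subsetneq C_n$ for all $n$, since a constant tail $C_n=C$ would put $C$ in every $\mc{P}_m$ for large $m$, contradicting the presence of strictly smaller partition balls inside $C$ forced by further refinement near $\xi$. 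By Lemma~\ref{lem:delta_refine}(3), each $C_{n+1}$ must equal $g^{-1}a_n(Z_n)$ for some $a_n\in K$ and $Z_n\in\mc{P}_n$, the alternative $C_{n+1}\in \mc{P}_n$ being ruled out since it would place two distinct nested balls $C_{n+1}\subsetneq C_n$ in the partition $\mc{P}_n$. The hard part is extracting a translation from this descending chain: using compactness of $\ol K$ to pass to a further subsequence on which $a_n$ converges in $\ol K$, and using that $g$ acts as a single homothety on a sufficiently small ball around $\xi$ so the local homothety factors of $g$ near $\xi$ and $g(\xi)$ stabilize for large $n$, one combines the relations $g(C_{n+1})=a_n(Z_n)\subsetneq g(C_n)$ to produce a ball $B$ and an element $h\in\grp{K,g}$ (built from $g$ and finitely many elements of $K$) such that $h_{\rest B}$ is a homothety with $h(B)\subsetneq B$. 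Such an $h$ is a translation in $\grp{K,g}$, contradicting the hypothesis.

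Once termination is established at $\mc{P}_N=\mc{P}_{N+1}$, we have $\Delta(\mc{P}_N)=g.\mc{P}_N$, which forces $\Theta_K(Q,\mc{P}_N)=\{Q\}$ for every $Q\in g.\mc{P}_N$: no ball of the form $a(Z)$ with $a\in K$ and $Z\in\mc{P}_N$ is strictly contained in any $Q\in g.\mc{P}_N$. Taking $a=\mathrm{id}$ gives $\Omega_{\mc{P}_N,g}=\emptyset$, so no $Q\in g.\mc{P}_N$ strictly contains a $\mc{P}_N$-ball, and a counting argument proves $g$-invariance: setting $S:=\mc{P}_N\setminus g.\mc{P}_N$ and $T:=g.\mc{P}_N\setminus\mc{P}_N$, we have $|S|=|T|$, while each $B\in S$ is partitioned by at least two balls of $T$ (no $W$-ball can sit strictly inside $B\in\mc{P}_N$ by the partition property), giving $|T|\sgeq 2|S|$ and forcing $|S|=|T|=0$, i.e.\ $g.\mc{P}_N=\mc{P}_N$. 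For $K$-invariance, apply the terminal condition with $a=k\in K$ together with the just-established $g.\mc{P}_N=\mc{P}_N$: no $k(Z)$ is strictly contained in any $\mc{P}_N$-ball, so each $Q\in k.\mc{P}_N\setminus\mc{P}_N$ must strictly contain at least two $\mc{P}_N$-balls, and the analogous counting yields $k.\mc{P}_N=\mc{P}_N$ for every $k\in K$. Therefore $\grp{K,g}\sleq \Stab(\mc{P}_N)$, and $\mc{Q}:=\mc{P}_N$ is the desired partition.
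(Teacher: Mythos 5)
Your overall architecture matches the paper's: iterate $\mc{P}_{n+1}:=g^{-1}.\Delta(\mc{P}_n)$, show the sequence stabilizes by extracting a translation from a hypothetical infinite strict refinement, and then verify that the terminal partition is preserved by $g$ and by $K$. The finishing steps are fine: your derivation of $\Theta_K(Q,\mc{P}_N)=\{Q\}$ from $\Delta(\mc{P}_N)=g.\mc{P}_N$ and the counting arguments for $g$- and $K$-invariance are correct (the $g$-invariance counting is essentially the observation already recorded in the paper at the start of the section: if $\Omega_{\mc{P},g}=\emptyset$ then $\mc{P}=g.\mc{P}$ since $|\mc{P}|=|g.\mc{P}|$).

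The genuine gap is in the termination argument, precisely at the point you yourself label ``the hard part.'' Passing to a subsequence on which $a_n$ converges in $\ol{K}$ does not produce an element of $\grp{K,g}$, and the relations $g(C_{n+1})=a_n(Z_n)\subsetneq g(C_n)$ involve balls $Z_n\in\mc{P}_n$ that vary with $n$ and belong to ever-finer partitions, so there is no evident way to ``combine'' finitely many of these relations into a single $h\in\grp{K,g}$ and a single ball $B$ with $h(B)\subsetneq B$. What makes this step work in the paper is a piece of bookkeeping you are missing: by induction on $n$, \emph{every} ball $B\in\mc{P}_n$ has the form $B=\gamma(Q)$ for some $\gamma\in\grp{K,g}$ and some $Q$ in the \emph{original finite} partition $\mc{P}$, with $\gamma_{\rest Q}:Q\rightarrow B$ a homothety (this follows from Lemma~\ref{lem:delta_refine}(3) by unwinding the recursion). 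Given the infinite descending chain $B_0\supsetneq B_1\supsetneq\cdots$, the pigeonhole principle on $\mc{P}$ then yields indices $i<j$ with $B_i=\xi(Q)$ and $B_j=\delta(Q)$ for the \emph{same} $Q\in\mc{P}$; the element $\gamma:=\xi^{-1}\delta\in\grp{K,g}$ acts as a homothety on $Q$ with $\gamma(Q)=\xi^{-1}(B_j)\subsetneq\xi^{-1}(B_i)=Q$, i.e.\ $\gamma$ is a translation. No compactness of $\ol{K}$ or convergence is needed; replace your limiting argument with this induction-plus-pigeonhole step and the proof closes.
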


\begin{proof}
	We use the function $\Delta$ defined in Lemma~\ref{lem:delta_refine} to build a refining sequence of regular partitions $(\mc{P}_i)_{i\in \Nb}$. Set $\mc{P}_0:=\mc{P}$. If we have defined $\mc{P}_n$, define 
	\[
	\mc{P}_{n+1}:=g^{-1}.\Delta(\mc{P}_n).
	\]
	Lemma~\ref{lem:delta_refine} ensures that $\mc{P}_{n+1}$ refines $\mc{P}_n$ and is a regular partition. Observe that each $B\in \mc{P}_{n+1}$ has the form $B=\gamma(Q)$ for some $\gamma \in \grp{K,g}$ and $Q\in \mc{P}$. An induction argument shows for such a $B$, the element $\gamma$ is such that $\gamma_{\rest Q}:Q\rightarrow B$ is a homothety. 
	
	Suppose for contradiction that the sequence of partitions $\mc{P}_n$ never stabilizes. Since the partitions are refining and finite, there is $B_0\in \mc{P}_0$ which is refined in infinitely many $\mc{P}_n$. Let $n_1$ be the next index such that $B_0$ is properly refined in $\mc{P}_{n_1}$. Since the partition $\mc{P}_{n_1}$ is finite, there is some $B_1\in \mc{P}_{n_1}$ such that $B_1\subsetneq B_0$ and $B_1$ is refined in infinitely many $\mc{P}_n$. We produce an infinite sequence $B_0\supsetneq B_1\supsetneq \dots$ by continuing in this fashion.
	
	Each $B_i$ has the form $\xi(Q)$ for some $\xi\in \grp{K,g}$ and $Q\in\mc{P}$. Since $\mc{P}$ is finite, we may find $B_j\subsetneq B_i$ where $B_{j}=\delta(Q)$ and $B_i=\xi(Q)$ for some $\xi,\delta\in \grp{K,g}$ and $Q\in \mc{P}$. The element $\delta$ acts as a homothey on $Q$. On the other hand, $\xi^{-1}$ acts as a homothety on $B_i$, so it acts as a homothety on $\delta(Q)=B_j$.  We thus deduce that $\gamma:=\xi^{-1}\delta$ acts as a homothety on $Q$ and that $\gamma(Q)\subsetneq Q$.  Hence, $\gamma$ is a translation, contradicting our assumption that $\grp{K,g}$ contains no translation.
	
	Fix $N\in \Nb$ such that our sequence of partitions stabilizes at $N$. Set $Q:=\mc{P}_N$ and observe that $\mc{Q}$ is admissible for $g$. We now compute
	\[
	\Omega_{\mc{Q},g}:=\{R\in g.\mc{Q}\mid \exists\; Q\in \mc{Q}\text{ for which }Q\subsetneq R\}.
	\]
	Suppose that $\Omega_{\mc{Q},g}$ is non-empty; say that $g(L)\in \Omega_{\mc{Q},g}$ and $Q\in \mc{Q}$ is such that $Q\subsetneq g(L)$. Forming $\Delta(\mc{Q})$, it follows that there is $Q'\in \mc{Q}$ and $k\in K$ such that $k(Q')\in \Delta(\mc{Q})$ and $k(Q')\subsetneq g(L)$. Hence, $g^{-1}k(Q')\in \mc{P}_{N+1}=\mc{P}_N=\mc{Q}$. On the other hand, $g^{-1}k(Q)\subsetneq L\in \mc{Q}$ contradicting that $\mc{Q}$ is a partition. We conclude that $\Omega_{\mc{Q},g}=\emptyset$, so $g\in \Stab(\mc{Q})$.
	
	Let us now take $k\in K$ and compute $\Omega_{\mc{Q},k}$.  Suppose that $\Omega_{\mc{Q},k}$ is non-empty; say that $k(L)\in \Omega_{\mc{Q},k}$ and $Q\in \mc{Q}$ is such that $Q\subsetneq k(L)$. Thus $k^{-1}(Q)\subsetneq L$, and since $g$ stabilizes $\mc{Q}$, we have that $k^{-1}(Q)\subsetneq g(g^{-1}(L))$ with $g^{-1}(L)\in \mc{Q}$.  Forming $\Delta(\mc{Q})$, it follows there is $Q'\in \mc{Q}$ and $k'\in K$ such that $k'(Q')\in \Delta(\mc{Q})$ and $k'(Q')\subsetneq g(g^{-1}(L))$, and thus, $g^{-1}k'(Q')\in \mc{P}_{N+1}=\mc{Q}$. On the other hand, $g^{-1}k'(Q')\subsetneq g^{-1}(L)\in \mc{Q}$ contradicting that $\mc{Q}$ is a partition. We conclude that $\Omega_{\mc{Q},k}=\emptyset$, so $k\in \Stab(\mc{Q})$. 
	
	We have established that $\grp{K,g}\leq \Stab(\mc{Q})$, verifying the theorem.
\end{proof}

Applying our technical theorem, we prove the promised results on elliptic elements and subgroups.

\begin{prop} \label{prop:char_1_elliptic} 
	For $g\in \aaut$, the following are equivalent:
	\begin{enumerate}[(1)]
		\item $g$ is elliptic;
		\item some power of $g$ is an isometry of $\bord$; i.e.\ there is $n \geq 1$ such that $g^n \in \aut$;
		\item $g$ is periodic;
		\item $g$ is not a translation.
	\end{enumerate}
\end{prop}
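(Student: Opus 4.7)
The plan is to prove the cycle $(4) \Rightarrow (1) \Rightarrow (2) \Rightarrow (3) \Rightarrow (4)$, leaning on Theorem~\ref{thm:main_ell_sgrps} for the first implication. Since being a translation is invariant under passage to non-zero powers (a direct check from the definition, using $g^{-1}$ to handle the sign of the exponent), if $g$ is not a translation then $\langle g \rangle$ contains no translation. Applying Theorem~\ref{thm:main_ell_sgrps} with $K = \{1\}$ and any admissible partition $\mc{P}$ for $g$ produces a refining partition $\mc{Q}$ with $g \in \Stab(\mc{Q})$, which exhibits $g$ as elliptic.

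For $(1) \Rightarrow (2)$, fix an admissible partition $\mc{P} = \{B_1, \dots, B_n\}$ for $g$ with $g.\mc{P} = \mc{P}$. Then $g^n$ fixes every $B_i$ set-wise, so $g^n|_{B_i} : B_i \to B_i$ is a bijective homothety of $B_i$ onto itself; comparing diameters forces the scaling ratio to be $1$, so $g^n|_{B_i}$ is an isometry. Inter-ball distances are automatically preserved: for $x \in B_i$ and $y \in B_j$ with $i \neq j$, the quantity $d(x,y)$ depends only on the level in $\treedk$ of the common ancestor of the vertices defining $B_i, B_j$, and since $g^n$ preserves each $B_i$ set-wise the same formula gives $d(g^n(x), g^n(y)) = d(x,y)$. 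Hence $g^n$ is a global isometry of $\bord$, i.e., $g^n \in \aut$. The step $(2) \Rightarrow (3)$ is immediate: $\aut$ is compact, so $\overline{\langle g^n \rangle}$ is compact, and $\overline{\langle g \rangle} = \bigcup_{i=0}^{n-1} g^i \overline{\langle g^n \rangle}$ is a finite union of compacts, hence compact.

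The most delicate step is $(3) \Rightarrow (4)$, which I plan to argue by contradiction via equicontinuity. The action $\aaut \acts \bord$ is continuous: $\aut$ is open in $\aaut$ and acts by isometries, with a neighborhood basis at $1 \in \aut$ given by pointwise stabilizers of vertex levels, which displace boundary points arbitrarily little. If $g$ is periodic, then $H := \overline{\langle g \rangle}$ is a compact group acting continuously on the compact metric space $\bord$, and uniform continuity of $H \times \bord \to \bord$ on the compact product forces $\{h \cdot \}_{h \in H}$ to be equicontinuous. Assume for contradiction that $g$ is also a translation. After replacing $g$ by $g^{-1}$ if needed, there is a ball $B$ and $m > 0$ with $g^m|_B$ a homothety and $g^m(B) \subsetneq B$; iteration yields nested balls $B \supsetneq g^m(B) \supsetneq g^{2m}(B) \supsetneq \cdots$ with diameters tending to $0$. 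Picking $x, y \in B$ with $d(x,y) \geq \tfrac{1}{2}\,\diam(B)$ and setting $x_k := g^{mk}(x)$, $y_k := g^{mk}(y)$, one has $d(x_k, y_k) \to 0$ while $d(g^{-mk}(x_k), g^{-mk}(y_k)) = d(x,y) \geq \tfrac{1}{2}\,\diam(B)$, contradicting the equicontinuity of $\{g^{-mk}\}_k \subseteq H$. An alternative route is to establish $(3) \Rightarrow (2)$ directly, using that any compact subgroup of a \tdlc group lies in a compact open subgroup $U$ and that $U \cap \aut$ has finite index in $U$ (it is open in compact $U$), so some power of $g$ lies in $\aut$; one then concludes using the trivial remark that an isometry of $\bord$ cannot map a ball strictly into itself.
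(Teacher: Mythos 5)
Your cycle $(4)\Rightarrow(1)\Rightarrow(2)\Rightarrow(3)\Rightarrow(4)$ is the same as the paper's, and the key step $(4)\Rightarrow(1)$ is handled exactly as there, by invoking Theorem~\ref{thm:main_ell_sgrps} with $K=\{1\}$ (your preliminary remark that no power of a non-translation is a translation is a worthwhile explicit check, since the theorem's hypothesis concerns all of $\grp{g}$). One slip in $(1)\Rightarrow(2)$: from $g.\mc{P}=\mc{P}$ with $|\mc{P}|=n$ you cannot conclude that $g^n$ fixes every part set-wise; the induced permutation of the $n$ parts need not have order dividing $n$ (e.g.\ a product of a $2$-cycle and a $3$-cycle on $5$ parts has order $6$). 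You should raise $g$ to the power $n!$, or to the order of the induced permutation, as the paper does; the rest of your argument (self-homotheties of a ball are isometries by comparing diameters, and inter-ball distances are constant) then goes through and is in fact more detailed than the paper's one-line assertion. For $(3)\Rightarrow(4)$ your equicontinuity argument is valid but heavier than necessary: it requires justifying joint continuity of $\aaut\acts\bord$ and the equicontinuity of a compact transformation group, whereas the paper argues directly that the set-wise stabilizer of the ball $B$ is open, so by compactness of $\cgrp{g^{n}}$ some power $g^{mn}$ returns $B$ to itself, contradicting $g^{in}(B)\subsetneq B$ for all $i\geq 1$. Your sketched ``alternative route'' via $(3)\Rightarrow(2)$ is essentially this same pigeonhole-on-an-open-subgroup idea and is the cleaner choice.
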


\begin{proof}
	The implication $(2)\Rightarrow (3)$ is immediate, and Theorem~\ref{thm:main_ell_sgrps} gives $(4)\Rightarrow (1)$.
	
	For $(1)\Rightarrow (2)$, suppose that $g$ is elliptic and let $m$ be the cardinality of an admissible partition $\mc{P}$ that is preserved by $g$. The power $g^{m!}$ stabilizes all the parts of $\mc{P}$, and therefore $g^{m!} \in \aut$.

	For $(3)\Rightarrow (4)$, suppose that $g$ is periodic and suppose for contradiction that $g$ is a translation. We may find a power $n\in \Zb$ and an admissible partition $\mc{P}$ for which there is $B \in \mc{P}$ with $g^n(B)\subsetneq B$. It then follows that $g^{in}(B)\subsetneq B$ for all $i \geq 1$. On the other hand, since the subgroup of $\aaut$ consisting of elements set-wise stabilizing the ball $B$ is open and $g^n$ is periodic, there is $m \geq 1$ such that $g^{mn}(B) = B$, which is absurd.
\end{proof}

\begin{cor}\label{cor:elliptic_sgrp} 
	For $H\leq \aaut$, the following are equivalent:
	\begin{enumerate}
		\item $H$ contains only elliptic elements;
		\item For every compactly generated subgroup $\Gamma\leq H$, there is a regular partition $\mc{R}$ such that $\Gamma\sleq\Stab(\mc{R})$;
		\item $H$ is locally elliptic. 
	\end{enumerate}
\end{cor}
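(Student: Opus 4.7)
The plan is to prove the three implications $(3)\Rightarrow(1)\Rightarrow(2)\Rightarrow(3)$, where the first and last are nearly immediate and $(1)\Rightarrow(2)$ carries the weight, hinging on an iterative application of Theorem~\ref{thm:main_ell_sgrps}. For $(3)\Rightarrow(1)$: if $H$ is locally elliptic then each $\overline{\langle g\rangle}$ is compact, so $g$ is periodic, hence elliptic by Proposition~\ref{prop:char_1_elliptic}. For $(2)\Rightarrow(3)$: any finite $F\subseteq H$ compactly generates $\langle F\rangle$, which by (2) sits inside some $\Stab(\mc{R})$; since the latter is compact, $\overline{\langle F\rangle}$ is compact.

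For $(1)\Rightarrow(2)$, fix a compact generating set $S$ of $\Gamma$. The first step is to use compactness of $S$ to produce a single spherical partition $\Sc_n$ admissible for every $s\in S$: each element of $\aaut$ admits some such partition, and the set of elements for which a given $\Sc_n$ is admissible is open (it contains $g\Stab(\Sc_n)$ whenever $\Sc_n$ is admissible for $g$), so a finite subcover followed by taking the largest index yields the desired $\Sc_n$. Setting $U:=\Stab(\Sc_n)$ and covering $S$ by finitely many cosets of $U$ with representatives in $S$, I obtain a finite set $F\subseteq S$ with $S\subseteq FU$. Writing $s=fu$ with $f\in F\subseteq \Gamma$ and $u\in U$ forces $u=f^{-1}s\in \Gamma\cap U$, so in fact $S\subseteq F\cdot(\Gamma\cap U)$ and hence $\Gamma=\langle F,\Gamma\cap U\rangle$.

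Enumerating $F=\{f_1,\dots,f_m\}$, I then iterate Theorem~\ref{thm:main_ell_sgrps}. Set $K_0:=\Gamma\cap U\leq \Stab(\Sc_n)$ and $\mc{R}_0:=\Sc_n$. At stage $i$, the group $K_{i-1}:=\langle K_0,f_1,\dots,f_{i-1}\rangle$ is a subgroup of $\Stab(\mc{R}_{i-1})$, hence relatively compact; the partition $\mc{R}_{i-1}$ is admissible for $K_{i-1}$ and refines $\Sc_n$, so it is admissible for $f_i$ as well. The hypothesis of Theorem~\ref{thm:main_ell_sgrps} that $\langle K_{i-1},f_i\rangle$ contains no translations holds automatically, since $\langle K_{i-1},f_i\rangle\leq \Gamma\leq H$ and $H$ consists of elliptic elements. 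The theorem produces a regular partition $\mc{R}_i\leq \mc{R}_{i-1}$ stabilized by $\langle K_{i-1},f_i\rangle=K_i$. After $m$ stages, $\mc{R}_m$ is stabilized by all of $\Gamma=\langle K_0,f_1,\dots,f_m\rangle$, as required.

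The main obstacle is the passage from finitely generated to compactly generated $\Gamma$: Theorem~\ref{thm:main_ell_sgrps} only handles adding one new element on top of a compact group at a time, so the ``continuous'' part of the generating set must first be absorbed into a compact open subgroup -- this is exactly what the decomposition $S\subseteq F\cdot(\Gamma\cap U)$ accomplishes. Everything else is a clean inductive unpacking of the already-established Theorem~\ref{thm:main_ell_sgrps}.
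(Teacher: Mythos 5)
Your proposal is correct and follows essentially the same route as the paper: the cycle $(1)\Rightarrow(2)\Rightarrow(3)\Rightarrow(1)$ with the easy implications handled via Proposition~\ref{prop:char_1_elliptic}, and $(1)\Rightarrow(2)$ obtained by writing $\Gamma$ as generated by a finite set together with its intersection with a compact open subgroup and then iterating Theorem~\ref{thm:main_ell_sgrps}. The only (harmless) difference is that you use $\Stab(\Sc_n)$ where the paper uses $\Aut(\Trd)$, and you spell out the coset-decomposition step that the paper leaves implicit.
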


\begin{proof}
	The implication $(2)\Rightarrow (3)$ is immediate. We deduce $(3)\Rightarrow (1)$ from Proposition \ref{prop:char_1_elliptic}, as every element in a locally elliptic subgroup is periodic.
	
	For $(1)\Rightarrow (2)$, there are finitely many $\gamma_1,\dots,\gamma_n\in \Gamma$ such that these along with $K:=\Gamma\cap \Aut(\Trd)$ generate $\Gamma$. Letting $\mc{P}$ be an admissible partition for $\gamma_1$, we see that $\mc{P}$ is admissible for $K$ and $\gamma_1$, and applying Proposition \ref{prop:char_1_elliptic}, $\grp{\gamma,K}$ contains no translations. Theorem~\ref{thm:main_ell_sgrps} now supplies a regular partition $\mc{Q}$ refining $\mc{P}$ such that $\grp{\gamma,K}\leq \Stab(\mc{Q})$. Repeating this argument, we deduce that $\Gamma\leq \Stab(\mc{R})$ for some regular partition $\mc{R}$.
\end{proof}

We stress an interesting consequence of Corollary~\ref{cor:elliptic_sgrp}: \textit{If $K\leq \aaut$ is compact, then there is a regular partition $\mc{P}$ such that $K\leq \Stab(\mc{P})$.}

\medskip

It is not hard to see from the definition of the Higman-Thompson group $V_{d,k}$ inside $\aaut$ that the elliptic elements of $V_{d,k}$ are precisely its elements of finite order. Corollary~\ref{cor:elliptic_sgrp} therefore implies the following result, which was proved in \cite[Theorem 3]{Rov-cfpsg}.

\begin{cor} \label{cor-torion-Vdk}
Every torsion subgroup of the Higman-Thompson group $V_{d,k}$ is locally finite.
\end{cor}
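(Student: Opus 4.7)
My plan is to invoke Theorem~\ref{thm:main_ell_sgrps} (which underlies Corollary~\ref{cor:elliptic_sgrp}) to find a regular partition $\mc{Q}$ stabilized by any finitely generated torsion $\Gamma\leq H$, arranged so that every element of $\Gamma$ acts on each part of $\mc{Q}$ by an order-preserving homothety, and then to show that the resulting permutation action $\Gamma\to\Sym(\mc{Q})$ is faithful.

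Fix a torsion subgroup $H\leq V_{d,k}$ and a finitely generated $\Gamma=\grp{g_1,\ldots,g_s}\leq H$. Each $g_i$ is periodic, hence elliptic by Proposition~\ref{prop:char_1_elliptic}, so $\Gamma$ contains no translations. Since $g_i\in V_{d,k}$ is locally order-preserving, I may choose an admissible partition for $g_i$ on which it preserves order piecewise, and take a common refinement to obtain a regular partition $\mc{S}$ that is admissible for every generator and on which each generator preserves order piecewise. I would then iterate Theorem~\ref{thm:main_ell_sgrps}, starting from $\mc{P}=\mc{S}$ and $K_0=\Gamma\cap\Aut(\Trd)$ (relatively compact since $\Aut(\Trd)$ is compact) and introducing the generators one at a time, to produce a regular partition $\mc{Q}$ refining $\mc{S}$ with $\Gamma\leq\Stab(\mc{Q})$.

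Because $\mc{Q}$ refines $\mc{S}$, each $g_i$ sends every $B\in\mc{Q}$ to some $g_i(B)\in\mc{Q}$ via an order-preserving homothety. A composition of order-preserving homotheties between parts of $\mc{Q}$ is again an order-preserving homothety between parts, so by induction on word length every $\gamma\in\Gamma$ acts on each $B\in\mc{Q}$ as an order-preserving homothety onto $\gamma(B)\in\mc{Q}$. The permutation action then yields a homomorphism $\Gamma\to\Sym(\mc{Q})$, and I would verify that it is injective. If $\gamma$ fixes every $B\in\mc{Q}$ setwise, then $\gamma_{\rest B}:B\to B$ is an order-preserving homothety, and comparing diameters forces ratio one, so $\gamma_{\rest B}$ is an order-preserving self-isometry of $B=\partial\Trd^v$. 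Any such isometry corresponds to an order-preserving automorphism of the rooted ordered tree $\Trd^v$, and since it must fix the ordered children of every vertex, it is the identity. Thus $\gamma=1$, and $\Gamma$ embeds into the finite group $\Sym(\mc{Q})$, so $\Gamma$ is finite.

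The main subtlety is the requirement that $\mc{Q}$ refine an admissible partition on which every generator preserves order piecewise; this is why I would invoke Theorem~\ref{thm:main_ell_sgrps} starting from the prescribed $\mc{S}$ rather than merely citing the conclusion of Corollary~\ref{cor:elliptic_sgrp}, as otherwise the kernel argument would break down (indeed, $V_{d,k}\cap\Stab(\mc{R})$ can be infinite for an arbitrary $\mc{R}$). Applied to $\Gamma=\grp{g}$ for a single elliptic $g\in V_{d,k}$, the same argument confirms the announced characterization that elliptic elements of $V_{d,k}$ are exactly the elements of finite order.
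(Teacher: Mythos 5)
Your proof is correct and follows essentially the same route as the paper: the paper derives the corollary in one line from Corollary~\ref{cor:elliptic_sgrp} together with the unproved remark that the elliptic elements of $V_{d,k}$ are exactly its finite-order elements, and your argument --- trapping $\Gamma$ in $\Stab(\mc{Q})$ for a partition $\mc{Q}$ refining an order-preserving admissible partition $\mc{S}$ for the generators, then checking that $\Gamma\to\Sym(\mc{Q})$ is faithful --- is precisely the content that the paper leaves implicit. You are also right to flag that merely quoting $\Gamma\leq\Stab(\mc{R})$ from Corollary~\ref{cor:elliptic_sgrp} would not suffice (a finitely generated torsion subgroup of a compact group need not be finite, e.g.\ the Grigorchuk group inside $\Aut(\mc{T}_{2,2})$), so the care taken with the choice of $\mc{S}$ and the injectivity of the permutation action is genuinely needed.
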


Corollary~\ref{cor:elliptic_sgrp} may also be applied to other interesting finitely generated subgroups of $\aaut$, for instance the finitely presented simple group containing the Grigorchuk group constructed in \cite{Rov-cfpsg}. We refer to the discussion following Question \ref{quest-commens-V} for more details. 

\section{Commensurated subgroups of groups acting on trees} \label{sec-action-tree}

We now begin our study of commensurated subgroups. In this section, we consider commensurated subgroups of groups acting on trees.

\begin{lem} \label{lem-comm-double}
Let $G$ be a group with a commensurated subgroup $\Lambda$. Then for every $g,h \in G$, there exists a finite index subgroup $\Lambda' \leq \Lambda$ such that $\left[\left[g,\Lambda'\right],h\right] \leq \Lambda$.
\end{lem}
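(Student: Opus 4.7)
The plan is to expand the double commutator $[[g,\lambda],h]$ and extract a short list of cosets whose intersection forces each factor into $\Lambda$; the promised $\Lambda'$ will then be the intersection of $\Lambda$ with conjugates of $\Lambda$ by $g$, $h$, and $hg$, all of which have finite index by the commensuration hypothesis.

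The key algebraic identity is
\[
[[g,\lambda],h]=[g,\lambda]\cdot h[g,\lambda]^{-1}h^{-1},
\]
so it suffices to arrange that $[g,\lambda]\in \Lambda\cap h^{-1}\Lambda h$. Writing $[g,\lambda]=(g\lambda g^{-1})\lambda^{-1}$, the first membership $[g,\lambda]\in\Lambda$ holds as soon as $\lambda\in g^{-1}\Lambda g$. For the second, a further expansion
\[
h[g,\lambda]h^{-1}=(hg\lambda g^{-1}h^{-1})\cdot(h\lambda^{-1}h^{-1})
\]
shows that $[g,\lambda]\in h^{-1}\Lambda h$ is guaranteed once $\lambda\in h^{-1}\Lambda h\cap (hg)^{-1}\Lambda(hg)$. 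Accordingly, set
\[
\Lambda':=\Lambda\cap g^{-1}\Lambda g\cap h^{-1}\Lambda h\cap (hg)^{-1}\Lambda(hg),
\]
which has finite index in $\Lambda$ because $\Lambda$ is commensurated by $g$, $h$, and $hg$.

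For every $\lambda\in\Lambda'$, the calculations above give $[g,\lambda]\in \Lambda\cap h^{-1}\Lambda h$, and hence $[g,\Lambda']\sleq \Lambda\cap h^{-1}\Lambda h$. For any $x$ in this intersection one has $[x,h]=x\cdot(hx^{-1}h^{-1})\in\Lambda$ since both factors lie in $\Lambda$, which yields $[[g,\Lambda'],h]\sleq \Lambda$. The argument is essentially bookkeeping; the only subtlety is recognizing that the conjugate by $hg$ (and not just by $g$ and $h$ separately) is needed, so that all four cosets of $\Lambda$ must be intersected.
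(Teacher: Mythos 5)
Your proof is correct and follows essentially the same route as the paper: the paper chooses the same $\Lambda'$ (a finite index subgroup conjugated into $\Lambda$ by $g$, $h$, and $hg$) and uses the single identity $[[g,\lambda],h]=(g\lambda g^{-1})\lambda^{-1}(h\lambda h^{-1})(hg\lambda^{-1}(hg)^{-1})$, whose four factors are exactly the ones your two-step expansion produces, just grouped into pairs.
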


\begin{proof}
Since the intersection of finitely many subgroups of finite index remains of finite index, there is a finite index subgroup $\Lambda' \leq \Lambda$ such that $x \Lambda' x^{-1} \leq \Lambda$ for every $x \in \left\{g,h,hg\right\}$. For $k \in G$, a simple computation yields 
\[
\left[\left[g,k\right],h\right] = (gkg^{-1}) k^{-1} (hkh^{-1}) (hg k^{-1} (hg)^{-1}).
\] 
Taking $k = \lambda \in \Lambda'$, this expressions gives a decomposition of $\left[\left[g,\lambda\right],h\right]$ as a product of four elements all of which belong to $\Lambda$. Therefore, $\left[\left[g,\lambda\right],h\right] \in \Lambda$, and the statement is proved.
\end{proof}

Let $T$ be a simplicial tree. If $G$ is a group acting on $T$, we say that the action is of \textbf{general type} if there exist two hyperbolic elements in $G$ without common endpoints. We say that the action is \textbf{minimal} if there is no proper invariant subtree.

\begin{prop} \label{prop-comm-Aut(T)}
Let $G \leq \Aut(T)$ whose action on $T$ is minimal and of general type and let $\Lambda \leq \Aut(T)$ be a subgroup that is commensurated by $G$. Then either
\begin{enumerate}
	\item $\Lambda$ stabilizes a vertex or an edge; or
	\item the action of $\Lambda$ on $T$ is of general type, and for every half-tree $T'$, $\Lambda$ contains the commutator subgroup of the pointwise fixator of $T'$ in $G$. 
\end{enumerate} 
\end{prop}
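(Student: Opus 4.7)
The plan is to proceed in two stages: first, show that if $\Lambda$ does not stabilize a vertex or edge then its action on $T$ is of general type; second, establish the commutator containment $[G_{T'}^0, G_{T'}^0] \leq \Lambda$ for every half-tree $T'$.

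For stage one, I argue by contradiction. Under the standard classification of isometric actions on trees, if $\Lambda$ neither stabilizes a vertex or edge nor acts of general type, then it has a nonempty fixed end set $E \subseteq \partial T$ with $|E| \leq 2$. The commensuration hypothesis gives, for each $g \in G$, a finite-index subgroup $\Lambda_g \leq \Lambda$ that fixes both $E$ and $gE$ pointwise in $\partial T$. Minimality of $G$ on $T$ upgrades to minimality of $G$ on $\partial T$, since the convex hull of any proper closed $G$-invariant subset of $\partial T$ would give a proper $G$-invariant subtree of $T$; hence $G\xi$ is infinite for every $\xi \in E$. Choosing $g_1, g_2 \in G$ so that $E \cup g_1 E \cup g_2 E$ contains three distinct ends, the finite-index subgroup $\Lambda_{g_1} \cap \Lambda_{g_2} \leq \Lambda$ fixes the tripod spanned by those three ends and in particular fixes a vertex. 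Since $\Lambda$ then permutes the finite $\Lambda$-orbit of this vertex, it must stabilize a vertex or edge, contradicting the hypothesis.

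For stage two, I first show the $\Lambda$-limit set $L(\Lambda) \subseteq \partial T$ equals $\partial T$. For any hyperbolic $a \in \Lambda$ with axis endpoints $\eta, \eta'$ and any $g \in G$, commensuration yields a power of $gag^{-1}$ inside $\Lambda$ with axis endpoints $g\eta, g\eta'$; thus the set of axis endpoints of hyperbolic elements of $\Lambda$, which is dense in $L(\Lambda)$, is $G$-invariant. It follows that $L(\Lambda)$ is a nonempty closed $G$-invariant subset of $\partial T$, hence equals $\partial T$ by $G$-minimality on $\partial T$. Since $\partial T' \subseteq L(\Lambda)$, a standard ping-pong argument yields a hyperbolic $\lambda \in \Lambda$ whose two axis endpoints both lie in $\partial T'$, so that the axis is contained in $T'$. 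A direct geometric check, following the images of the boundary edge of $T'$ under $\lambda$ and $\lambda^{-1}$, then shows that such $\lambda$ satisfies $\lambda T' \supseteq T \setminus T'$.

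Now, given $g, h \in U := G_{T'}^0$, Lemma~\ref{lem-comm-double} produces a finite-index subgroup $\Lambda' \leq \Lambda$ with $[[g, \lambda], h] \in \Lambda$ for every $\lambda \in \Lambda'$. A sufficiently high power of the hyperbolic element above lies in $\Lambda'$ and retains the same axis, so I may take $\lambda \in \Lambda'$ with $\lambda T' \supseteq T \setminus T'$. Setting $g' := \lambda g^{-1}\lambda^{-1}$, the element $g'$ fixes $\lambda T'$ pointwise, and thus fixes $T \setminus T'$ pointwise; its support therefore lies in $T'$, while the support of $h$ lies in $T \setminus T'$. Thus $g'$ and $h$ have disjoint supports and each fixes the other's support pointwise, so they commute, and the usual expansion yields $[[g, \lambda], h] = gg'hg'^{-1}g^{-1}h^{-1} = [g, h]$. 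Hence $[g, h] \in \Lambda$, giving $[U, U] \leq \Lambda$ and completing the proof.

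The principal obstacle is the identification of the limit set $L(\Lambda)$ with $\partial T$, which is what supplies the hyperbolic element of $\Lambda$ with axis inside $T'$ needed for the commutator computation. This identification rests on two separate pillars: the $G$-invariance of $L(\Lambda)$ coming from the commensuration hypothesis, and the minimality of $G$ on $\partial T$ derived from its minimality on $T$.
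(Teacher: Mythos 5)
Your proof is correct and follows essentially the same route as the paper's: rule out the bounded/end-fixing cases using commensuration of the invariant end set, show the $\Lambda$-action is "large" (you use $L(\Lambda)=\partial T$ where the paper shows the union of axes is a $G$-invariant, hence equal to $T$, subtree --- these are equivalent), extract a hyperbolic element of $\Lambda$ with axis in the half-tree, and conclude with the double-commutator identity $[[g,\lambda],h]=[g,h]$ fed into Lemma~\ref{lem-comm-double}. The only differences are cosmetic: your tripod/median argument for general type and your explicit verification of $\lambda T'\supseteq T\setminus T'$ spell out steps the paper delegates to citations of \cite{LB15}.
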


\begin{proof}
We assume that $\Lambda$ stabilizes neither a vertex nor an edge and prove that the second assertion holds. 

Let us first argue that the action of $\Lambda$ on $T$ must be of general type. If not, there is $\Omega \subset \partial T$ of cardinality one or two that is $\Lambda$-invariant such that $\Omega$ is the only $\Lambda$-invariant finite subset of $\partial T$. Since the action of $G$ on $T$ is of general type, we may find $g \in G$ such that $g(\Omega)$ is disjoint from $\Omega$. By assumption $g$ commensurates $\Lambda$, so there exists a finite index subgroup $\Lambda' \leq \Lambda$ such that $g \Lambda' g^{-1} \leq \Lambda$. Consequently $\Lambda$ has a finite index subgroup which preserves $g(\Omega)$, implying that the $\Lambda$-orbit of any element of $g(\Omega)$ in $\partial T$ is finite. This is a contradiction.

We now argue the action of $\Lambda$ on $T$ is minimal. It follows from the previous paragraph that $\Lambda$ contains hyperbolic elements and therefore admits a unique minimal invariant subtree $X$, which is the union of the axes of the hyperbolic elements of $\Lambda$. Let $\gamma \in \Lambda$ be a hyperbolic element and let $g \in G$. Since $G$ commensurates $\Lambda$, there is some $k \geq 1$ such that $g \gamma^k g^{-1} \in \Lambda$. The axis of the hyperbolic element $g \gamma^k g^{-1} \in \Lambda$ thus belongs to $X$. This axis is the image by $g$ of the axis of $\gamma$, so this proves that $X$ is in fact $G$-invariant. By minimality of the action of $G$ on $T$, we have $X=T$.

Let $T'$ be a half-tree of $T$. Since the action of $\Lambda$ on $T$ is minimal and of general type, $\Lambda$ must contain a hyperbolic element $\gamma$ whose axis is contained in $T'$. This is a classical fact; see for instance \cite[Lemma 4.3]{LB15}. Take $g,h \in G$ fixing pointwise $T'$. We show that the commutator $[g,h]$ belongs to $\Lambda$. The same argument as in the proof of \cite[Lemma 4.4]{LB15} shows that $[g,h] = [[g,\gamma],h]$. Moreover, this argument only depends on the axis of $\gamma$ and not on $\gamma$ itself, so we indeed have $[g,h] = [[g,\gamma^k],h]$ for every non-zero $k \in \mathbb{Z}$. Since $G$ commensurates $\Lambda$, Lemma \ref{lem-comm-double} supplies an integer $k \geq 1$, depending on $g,h$, such that $[[g,\gamma^k],h]$ belongs to $\Lambda$. The commutator $[g,h]$ therefore lies in $\Lambda$, and the proof is complete.
\end{proof}

We now obtain examples of non-discrete compactly generated simple \tdlc groups with exactly three commensurability classes of closed commensurated subgroups. We refer the reader to the original paper of Tits for the definition of the independence property \cite[Section 4.2]{Ti70}. For a subgroup $G \leq \Aut(T)$, we denote by $G^+$ the subgroup of $G$ generated by pointwise fixators of edges. Note that when $G$ is endowed with the compact-open topology, $G^+$ is an open subgroup of $G$. 

\begin{thm} \label{thm-commens-tree}
Suppose that $G$ is a closed subgroup of $\Aut(T)$ satisfying Tits' independence property and whose action on $T$ is minimal and of general type. Suppose further that the group $G^+$ acts cocompactly on $T$. If $\Lambda$ is a commensurated subgroup of $G^+$, then either $\Lambda$ is finite, $\ol{\Lambda}$ is compact open, or $\Lambda = G^+$.
\end{thm}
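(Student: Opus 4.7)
The plan is to apply Proposition~\ref{prop-comm-Aut(T)} to $G^+$ and $\Lambda$, and dispatch the two resulting cases using Theorem~\ref{thm-CRW2-finite-open} and Tits' abstract simplicity of $G^+$, respectively.

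First I would verify that $G^+$ inherits from $G$ the hypotheses needed for Proposition~\ref{prop-comm-Aut(T)}. Since $G^+ \trianglelefteq G$, any $G^+$-invariant subtree, or small $G^+$-invariant subset of $\partial T$, is automatically $G$-invariant, so the minimality and general type of the $G$-action pass to $G^+$. Moreover, $G^+$ is a compactly generated tdlc group (from its cocompact action on the locally finite tree $T$ with compact vertex stabilizers) and is abstractly simple by Tits' simplicity theorem (via the independence property together with the minimal, general-type action). Applying Proposition~\ref{prop-comm-Aut(T)} with $G^+$ in place of $G$ then yields two cases: either (i) $\Lambda$ stabilizes a vertex or an edge of $T$, or (ii) $\Lambda \supseteq [U_{T'}, U_{T'}]$ for every half-tree $T' \subseteq T$, where $U_{T'} := \mathrm{Fix}_{G^+}(T')$.

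In case (i), $\Lambda$ sits inside a compact open subgroup of $G^+$, so $\ol{\Lambda}$ is compact. By Lemma~\ref{lem:commensurated_closure}, $\ol{\Lambda}$ is commensurated in $G^+$, and Theorem~\ref{thm-CRW2-finite-open} then forces $\ol{\Lambda}$ to be finite or open, which gives the first two conclusions of the theorem. In case (ii), let $N$ be the abstract subgroup of $G^+$ generated by $\{[U_{T'}, U_{T'}] : T' \text{ a half-tree of } T\}$. Because $g U_{T'} g^{-1} = U_{g(T')}$ and $g(T')$ is again a half-tree, $N$ is normal in $G^+$ and contained in $\Lambda$. Abstract simplicity of $G^+$ forces $N \in \{1, G^+\}$, and $N = G^+$ directly yields $\Lambda = G^+$.

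The main obstacle I foresee is excluding $N = 1$ in case (ii). This would force $U_{T'}$ to be abelian for every half-tree $T'$, and by Tits' independence (decomposing the pointwise fixator of an edge as $U_{T_1} \times U_{T_2}$) would make every edge fixator abelian and every vertex stabilizer of $G^+$ virtually abelian. I would aim to derive a contradiction by exploiting the minimal general-type action of $\Lambda$ on $T$ furnished by Proposition~\ref{prop-comm-Aut(T)}: two independent hyperbolic elements of $\Lambda$ with axes placed deep inside a common half-tree $T'$ should, after a conjugation trick using the richness of $G^+$, produce two non-commuting elements of $U_{T'}$, witnessing $[U_{T'}, U_{T'}] \neq 1$ and completing the proof.
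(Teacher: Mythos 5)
Your proposal tracks the paper's proof almost exactly: the same two-case split via Proposition~\ref{prop-comm-Aut(T)} applied to $G^+$, the same use of Theorem~\ref{thm-CRW2-finite-open} together with Lemma~\ref{lem:commensurated_closure} in the relatively compact case, and the same appeal to Tits' simplicity of $G^+$ to conclude from $N\normal G^+$ with $N\leq\Lambda$. (Your preliminary check that the $G^+$-action is still minimal and of general type is a point the paper leaves implicit; just note that a $G^+$-invariant subtree need not itself be $G$-invariant --- rather, the \emph{unique minimal} $G^+$-invariant subtree is $G$-invariant because $G$ normalizes $G^+$.)

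The genuine gap is exactly where you flagged it: excluding $N=1$. What you offer there is a plan, not an argument. Your strategy --- use hyperbolic elements of $\Lambda$ with axes deep in a half-tree $T'$ plus an unspecified ``conjugation trick'' to manufacture two non-commuting elements of $U_{T'}$ --- does not obviously go through: hyperbolic elements of $\Lambda$ do not lie in any half-tree fixator, and nothing in the dynamics of $\Lambda$ by itself forces the fixators $U_{T'}$ to be non-abelian (the non-triviality of $N$ is a property of $G^+$ alone, independent of $\Lambda$). The paper closes this step by observing that $N=1$ would make every half-tree fixator, hence (via the independence property) every edge fixator, abelian, so $G^+$ would be a compactly generated, abstractly simple, locally abelian \tdlc group, contradicting \cite[Theorem 2.2]{Will07}. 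You should either cite such a structural result or give a genuine construction of non-commuting elements in some $U_{T'}$; as written, the final case of your proof is incomplete.
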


\begin{proof}
Since $G$ satisfies Tits' independence property, the group $G^+$ is abstractly simple according to Tits' theorem \cite[Th\'{e}or\`{e}me 4.5]{Ti70}. That $G^+$ acts cocompactly on $T$ implies that $G^+$ is additionally compactly generated; see for example \cite[Lemma 2.4]{CdM11}. 

If $\Lambda$ stabilizes a vertex or an edge, then $\Lambda$ is a relatively compact subgroup of $G^+$. We are then in position to apply Theorem \ref{thm-CRW2-finite-open}, which shows that $\Lambda$ is either finite or $\ol{\Lambda}$ is compact and open. 

In the case $\Lambda$ stabilizes neither a vertex nor an edge, Proposition \ref{prop-comm-Aut(T)} implies the subgroup $\Lambda$ must contain the subgroup $N$ of $G^+$ generated by the derived subgroups of fixators of half-trees in $G^+$. The subgroup $N$ is clearly normal in $G^+$. Furthermore, $N$ cannot be trivial. Indeed, otherwise the group $G^+$ would be locally abelian, which is impossible thanks to \cite[Theorem 2.2]{Will07}. We thus have have $N = G^+$, so in particular $\Lambda  = G^+$.
\end{proof}

If $T$ is a biregular tree, then the full automorphism group of $T$ satisfies the assumptions of Theorem \ref{thm-commens-tree}, and therefore any proper commensurated subgroup of $\mathrm{Aut}(T)^+$ is either finite or has compact open closure. 

The following example shows that it is not possible to conclude in the second case of Theorem~\ref{thm-commens-tree} that $\Lambda$ itself is compact open. We remark that the same construction holds in the group $\aaut$, so $\aaut$ also admits a multitude of non closed commensurated subgroups. 

\begin{ex} \label{ex-nonclosed-example}
Fix a vertex $v_0$ in $T$. Let $\Aut(T)_{(v_0)}$ be the stabilizer of $v_0$ in $\Aut(T)$ and define the homomorphism 
\[ 
\pi: \mathrm{Aut}(T)_{(v_0)} \rightarrow \prod_{n \geq 1} \left\{\pm 1\right\} = :K 
\]
by taking the signature of the permutation induced on every sphere around $v_0$.

For an ultrafilter $\omega$ on $\mathbb{N}_{>0}$, the set $K_{\omega}$ of sequences $(x_n)_{n\geq 1} \in K$ such that $\omega$-almost surely $x_n = 1$ is a subgroup of index two of $K$. Since $\pi$ is onto, the preimage $P_{\omega}$ of $K_{\omega}$ in $\mathrm{Aut}(T)_{(v_0)}$ is of index two in $\mathrm{Aut}(T)_{(v_0)}$. The group $P_{\omega}$ is then commensurable with a compact open subgroup of $\mathrm{Aut}(T)^+$, and thus, it must be commensurated in $\mathrm{Aut}(T)^+$. However, one easily checks that $P_{\omega}$ is closed in $\mathrm{Aut}(T)^+$ only if the ultrafilter $\omega$ is principal, and consequently there exist many non-closed commensurated subgroups in $\mathrm{Aut}(T)^+$.
\end{ex}

An important class of groups satisfying Tits' independence property is the collection of groups $U(F)$ introduced by Burger and Mozes, consisting of automorphisms of a $d$-regular tree whose local action is prescribed by a permutation group $F$. For a formal definition and basic properties of these groups, we refer the reader to \cite{BM-IHES}. We only mention that the permutation groups $F$ which are transitive and generated by their point stabilizers are exactly those for which the group $U(F)^+$ is the subgroup of index two of $U(F)$ preserving the type of vertices. 

We deduce from Theorem \ref{thm-commens-tree} the following result for commensurated subgroups of $U(F)$.

\begin{cor} \label{cor-comens-u(f)}
If $F$ is transitive permutation group generated by its point stabilizers, then every proper commensurated subgroup of $U(F)^+$ is either finite or has compact open closure.
\end{cor}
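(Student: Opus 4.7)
The plan is simply to verify that the group $G = U(F)$ acting on the $d$-regular tree $T$ satisfies the four hypotheses of Theorem~\ref{thm-commens-tree} and then invoke that theorem.

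First I would recall the elementary facts that $U(F)$ is a closed subgroup of $\Aut(T)$ and that it satisfies Tits' independence property by its very construction (a standard result of Burger--Mozes). Next, since $F$ is transitive on $\{1,\dots,d\}$, the group $U(F)$ acts transitively on the vertices of $T$; in particular its action is minimal. For general type, I would exhibit two hyperbolic elements with disjoint pairs of endpoints: because $U(F)$ acts transitively on vertices and contains a hyperbolic element (indeed, even the subgroup $U(\{1\})$ of ``locally trivial'' automorphisms translates along any given bi-infinite geodesic), a suitable conjugate yields a second hyperbolic element whose axis is disjoint from the first, giving four distinct endpoints.

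The fourth hypothesis concerns $U(F)^+$, and this is the point at which the assumption that $F$ is generated by its point stabilizers enters. Under this assumption, it is a theorem of Burger--Mozes that $U(F)^+$ coincides with the index-two subgroup of $U(F)$ preserving the bipartition of $T$ into types of vertices. Consequently, $U(F)^+$ acts on $T$ with exactly two orbits of vertices, hence cocompactly.

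With all four hypotheses verified, Theorem~\ref{thm-commens-tree} applies directly and yields the trichotomy claimed by the corollary. The only step requiring any real thought is the verification of general type, but this is essentially automatic from vertex-transitivity together with the existence of a single hyperbolic element in $U(F)$; no obstacle is anticipated.
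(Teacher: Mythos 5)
Your proposal is correct and follows exactly the route the paper intends: the paper deduces the corollary by citing the remark that for $F$ transitive and generated by point stabilizers, $U(F)^+$ is the index-two type-preserving subgroup (hence cocompact), and then applying Theorem~\ref{thm-commens-tree}, with the remaining hypotheses (closedness, Tits' independence, minimality, general type) being standard Burger--Mozes facts just as you verify them. The only cosmetic remark is that vertex-transitivity of $U(F)$ holds for any $F$ (already $U(\{1\})$ is vertex-transitive), not because $F$ is transitive, but this does not affect the argument.
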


In the case $F$ is two transitive and the point stabilizers are perfect, \cite[Theorem 8.6]{LW15} shows we need not pass to closures - i.e.\ every proper commensurated subgroup of $U(F)^+$ is either finite or compact and open. An example of such a group is $U(A_6)^+$ where $A_6$ is a permutation group in the natural manner.

We conclude this section with an observation about groups acting on products of trees. If $T_1$ and $T_2$ are two simplicial trees, we will denote by $\mathrm{pr}_1$ and $\mathrm{pr}_2$ the projections from $\mathrm{Aut}(T_1) \times \mathrm{Aut}(T_2)$ onto, respectively, $\mathrm{Aut}(T_1)$ and $\mathrm{Aut}(T_2)$. 

\begin{prop} \label{prop-commens-product-trees}
Let $T_1,T_2$ be locally finite trees and $\Gamma \leq \mathrm{Aut}(T_1) \times \mathrm{Aut}(T_2)$ be a discrete subgroup such that the action of $\Gamma$ on $T_i$ is minimal and of general type for $i=1,2$. Assume further that the projections $\mathrm{pr}_1(\Gamma)$ and $\mathrm{pr}_2(\Gamma)$ are non-discrete. Then the following hold:
\begin{enumerate}
	\item $\Gamma$ has no infinite amenable commensurated subgroups, and
	\item $\Gamma$ admits infinite and infinite index commensurated subgroups.
\end{enumerate}
\end{prop}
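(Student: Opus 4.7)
The plan is to establish (1) and (2) separately, using Proposition~\ref{prop-comm-Aut(T)} applied through the projections $\mathrm{pr}_1,\mathrm{pr}_2$ as the key tool for (1), and a direct construction using compact open subgroups in the factors for (2).

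For part (1), suppose for contradiction that $\Lambda\leq\Gamma$ is an infinite, amenable, commensurated subgroup. The first step is the observation that commensuration is preserved under group homomorphisms: if $H\leq G$ is commensurated in $G$ and $\pi\colon G\to K$ is any homomorphism, then $\pi(H)$ is commensurated in $\pi(G)$, since
\[
[\pi(H):\pi(H)\cap\pi(g)\pi(H)\pi(g)^{-1}]\leq[H:H\cap gHg^{-1}]<\infty.
\]
Applied to $\mathrm{pr}_i\colon\Gamma\to\mathrm{pr}_i(\Gamma)$, this shows $\mathrm{pr}_i(\Lambda)$ is commensurated in $\mathrm{pr}_i(\Gamma)$. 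Since the $\mathrm{pr}_i(\Gamma)$-action on $T_i$ is inherited from that of $\Gamma$ and is therefore minimal and of general type, Proposition~\ref{prop-comm-Aut(T)} offers two alternatives for $\mathrm{pr}_i(\Lambda)$: either it stabilizes a vertex or an edge of $T_i$, or its action on $T_i$ is again of general type. The second alternative is ruled out by amenability, since a group acting of general type on a tree contains a non-abelian free subgroup via ping-pong on two hyperbolic elements without common endpoints, whereas $\mathrm{pr}_i(\Lambda)$ is a quotient of the amenable group $\Lambda$. Hence for $i=1,2$ the group $\mathrm{pr}_i(\Lambda)$ lies in a compact subgroup $K_i\leq\mathrm{Aut}(T_i)$, so $\Lambda\subseteq K_1\times K_2$; being a discrete subset of a compact set, $\Lambda$ must be finite, contradicting the hypothesis.

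For part (2), fix a vertex $v\in T_2$ and let $U:=\mathrm{Stab}_{\mathrm{Aut}(T_2)}(v)$, a compact open subgroup of $\mathrm{Aut}(T_2)$. Set
\[
\Lambda:=\Gamma\cap\bigl(\mathrm{Aut}(T_1)\times U\bigr).
\]
For $g=(g_1,g_2)\in\Gamma$ one has $\Lambda\cap g\Lambda g^{-1}=\Gamma\cap(\mathrm{Aut}(T_1)\times(U\cap g_2Ug_2^{-1}))$, and the assignment $\lambda(\Lambda\cap g\Lambda g^{-1})\mapsto\mathrm{pr}_2(\lambda)(U\cap g_2Ug_2^{-1})$ is a well-defined injection from $\Lambda/(\Lambda\cap g\Lambda g^{-1})$ into $U/(U\cap g_2Ug_2^{-1})$; the latter is finite because $U$ is compact and open, so $\Lambda$ is commensurated in $\Gamma$. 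The coset space $\Gamma/\Lambda$ is in bijection with the $\mathrm{pr}_2(\Gamma)$-orbit of $v$ in $T_2$, and since $\mathrm{pr}_2(\Gamma)$ contains hyperbolic elements on $T_2$ this orbit is infinite, giving $[\Gamma:\Lambda]=\infty$. Finally, $\mathrm{pr}_2(\Lambda)=\mathrm{pr}_2(\Gamma)\cap U$, and the non-discreteness of $\mathrm{pr}_2(\Gamma)$, combined with the Hausdorff property of $\mathrm{Aut}(T_2)$, forces every neighborhood of the identity to meet $\mathrm{pr}_2(\Gamma)$ infinitely; in particular $\mathrm{pr}_2(\Lambda)$ is infinite, and hence so is $\Lambda$.

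The substantive content lies in part (1), where pushing commensuration through the projections reduces matters to the alternative provided by Proposition~\ref{prop-comm-Aut(T)}, which is then resolved by the elementary incompatibility of general type actions on trees with amenability. Part (2) is a mechanical verification: pulling back a compact open subgroup of one factor yields a commensurated subgroup whose infiniteness is controlled by non-discreteness of the projection and whose infinite index is controlled by the hyperbolic dynamics.
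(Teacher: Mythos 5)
Your proof is correct and follows essentially the same route as the paper: part (1) reduces via the projections to the dichotomy of Proposition~\ref{prop-comm-Aut(T)}, rules out general type by amenability, and invokes discreteness of $\Gamma$ inside the compact product of stabilizers; part (2) takes the vertex stabilizer for the projection action, exactly as in the paper (which uses $T_1$ rather than $T_2$, an immaterial difference). Your version merely spells out the push-forward of commensuration and the finite-index verification in more detail.
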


\begin{proof}
Assume that $\Lambda$ is an amenable commensurated subgroup of $\Gamma$. The actions of $\Lambda$ on $T_1$ and $T_2$ cannot be of general type, since otherwise $\Lambda$ would contain non-abelian free subgroups. Since $\Lambda$ is commensurated in $\Gamma$, it follows from Proposition \ref{prop-comm-Aut(T)} that $\Lambda$ must stabilize a vertex or an edge in each factor. Since $\Gamma$ is discrete in $\mathrm{Aut}(T_1) \times \mathrm{Aut}(T_2)$, this implies that $\Lambda$ must be finite.

For $(2)$, choose a vertex $v$ of $T_1$ and denote by $\Lambda$ the stabilizer of $v$ in $\Gamma$ for the projection action of $\Gamma$ on $T_1$. Since $T_1$ is locally finite, $\Lambda$ is commensurated in $\Gamma$. The subgroup $\Lambda$ is not finite because otherwise $\mathrm{pr}_1(\Gamma)$ would be discrete in $\mathrm{Aut}(T_1)$. On the other hand, $\Lambda$ is not of finite index in $\Gamma$, because this would imply that the $\Gamma$-orbit of $v$ is finite, contradicting the fact that the action of $\Gamma$ on $T_1$ is of general type. 
\end{proof}

Proposition \ref{prop-commens-product-trees} applies in particular to the finitely presented simple groups constructed by Burger and Mozes in \cite{BM-IHES-2}.

\begin{rmk}
The (non-)existence of infinite amenable commensurated subgroups is a property that naturally appears in the study of lattice envelopes of countable groups; see \cite{BFS15}.
\end{rmk}

\section{Technical results for almost automorphism groups}

For our results for commensurated subgroups in almost automorphism groups, we require several technical theorems. This section establishes these.

\subsection{Commensurated subgroups with a translation}

\begin{lem} \label{lem-trans-2balls}
If $\gamma \in \aaut$ is a translation, then there exist disjoint balls $B_1$ and $B_2$ of $\bord$ and $n \in \mathbb{Z}$ such that $\gamma^n(B_1) \lneq B_2$ and $\gamma^n(B_2) \lneq B_2$.
\end{lem}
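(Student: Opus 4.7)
The plan is to use the ball $B$ and the integer $n \in \mathbb{Z}$ supplied directly by the definition of translation, and manufacture the desired $B_1$ and $B_2$ from them. Concretely, I will take $B_2 := \gamma^n(B)$, which is already a proper sub-ball of $B$, and then I will choose $B_1$ to be any ball sitting inside the non-empty ``annular'' complement $B \setminus \gamma^n(B)$. The same integer $n$ given by the definition will serve as the integer in the conclusion.

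The first step is to verify that $B_2 = \gamma^n(B)$ satisfies $\gamma^n(B_2) \subsetneq B_2$. The key observation here is that since $\gamma^n_{\rest B}$ is a homothety of some ratio $\lambda$ and $B_2 \subset B$, the restriction $\gamma^n_{\rest B_2}$ is still a homothety with ratio $\lambda$. Hence $\gamma^n(B_2) = \gamma^{2n}(B)$, and strict containment follows because $\gamma^n|_B$ is an injective homothety and $\gamma^n(B) \subsetneq B$ implies $\gamma^n(\gamma^n(B)) \subsetneq \gamma^n(B)$.

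The second step is to produce $B_1$. Since $\gamma^n(B) \subsetneq B$ are two balls one of which is properly contained in the other, the set-theoretic difference $B \setminus \gamma^n(B)$ is non-empty and decomposes as a finite disjoint union of balls coming from the tree structure. I would pick $B_1$ to be any such constituent ball. Then $B_1 \cap B_2 = B_1 \cap \gamma^n(B) = \emptyset$ by construction, giving disjointness. Finally, from $B_1 \subsetneq B$ together with injectivity of the homothety $\gamma^n_{\rest B}$, one obtains $\gamma^n(B_1) \subsetneq \gamma^n(B) = B_2$, which is the last required inclusion.

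I do not anticipate any real obstacle: the statement is essentially an unpacking of the definition of a translation together with the observation that the complement of a proper sub-ball inside a ball is a union of balls. The only point that requires a moment's care is ensuring that $\gamma^n$ still acts as a homothety on the iterate $\gamma^n(B)$ when one is only told it does so on $B$, but this is immediate since a homothety restricted to a subset remains a homothety of the same ratio.
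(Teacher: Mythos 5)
Your proposal is correct and is exactly the paper's argument: take $B_2:=\gamma^n(B)$ and $B_1$ any proper ball of $B$ disjoint from $B_2$, with the verification following from injectivity of the homothety $\gamma^n_{\rest B}$. The paper simply labels this verification as immediate, whereas you spell it out.
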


\begin{proof}
By assumption, there is a ball $B$ and $n \in \mathbb{Z}$ such that $\gamma^n(B) \lneq B$. Take $B_2 := \gamma^n(B)$ and $B_1$ any proper ball of $B$ disjoint from $B_2$. The verification that these balls satisfy the conclusion is immediate. 
\end{proof}

In the next statement and its proof, we adopt the following notation: if $G \leq \aaut$ and $B$ is a ball of $\bord$, we denote by $G_B$ the subgroup of $G$ acting trivially outside of $B$.  

The proof of the next result is in the same spirit as the proof of Proposition \ref{prop-comm-Aut(T)}; that is to say, it uses the classical \enquote{double commutator} trick.

\begin{prop} \label{prop-tran-branch}
Suppose that $\Lambda \leq \aaut$ contains a translation. If $G\leq \aaut$ commensurates $\Lambda$, then there exists a proper ball $B$ such that $[G_B,G_B] \leq \Lambda$. If additionally $L\leq [G_B,G_B]$ has no proper finite index subgroup, then $\Lambda$ contains the group generated by all $G$-conjugates of $L$.
\end{prop}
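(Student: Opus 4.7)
The plan is to follow the double commutator strategy used in Proposition \ref{prop-comm-Aut(T)}, where the role of a hyperbolic element is now played by the translation $\gamma \in \Lambda$. Concretely, I would first apply Lemma \ref{lem-trans-2balls} to produce disjoint balls $B_1$ and $B_2$ of $\bord$ and $n \in \mathbb{Z}$ such that $\gamma^n(B_1) \subsetneq B_2$ and $\gamma^n(B_2) \subsetneq B_2$. The second condition implies $\gamma^{kn}(B_2) \subseteq B_2$ for all $k \geq 1$, and together with the first this gives $\gamma^{kn}(B_1) \subseteq B_2$ for all $k \geq 1$. I set $B := B_1$.

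Next, for arbitrary $g,h \in G_B$ I would verify the identity $[[g,\gamma^{kn}],h] = [g,h]$ for every $k \geq 1$. Since $g$ is supported in $B_1$, the element $\gamma^{kn} g^{-1} \gamma^{-kn}$ is supported in $\gamma^{kn}(B_1) \subseteq B_2$, which is disjoint from $B_1$. Hence $[g,\gamma^{kn}] = g \cdot (\gamma^{kn} g^{-1} \gamma^{-kn})$ acts as $g$ on $B_1$, as $\gamma^{kn}g^{-1}\gamma^{-kn}$ on $\gamma^{kn}(B_1)$, and trivially elsewhere. Since $h$ is supported in $B_1$ and $B_1$ is $[g,\gamma^{kn}]$-invariant, a direct computation ball by ball shows that $[[g,\gamma^{kn}],h]$ acts as $[g,h]$ on $B_1$ and trivially on $\bord \setminus B_1$.

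To conclude the first assertion, I would invoke Lemma \ref{lem-comm-double} to produce a finite index subgroup $\Lambda' \leq \Lambda$ with $[[g,\Lambda'],h] \leq \Lambda$. Since $\gamma \in \Lambda$, the power $\gamma^{kn}$ lies in $\Lambda'$ for some $k \geq 1$, and therefore $[g,h] = [[g,\gamma^{kn}],h]$ belongs to $\Lambda$. As $g,h \in G_B$ were arbitrary, $[G_B,G_B] \leq \Lambda$.

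For the second assertion, take $g \in G$ and $L \leq [G_B,G_B]$ with no proper finite index subgroup. Since $L \leq \Lambda$ and $\Lambda \cap g^{-1}\Lambda g$ has finite index in $\Lambda$, the subgroup $L \cap g^{-1}\Lambda g$ has finite index in $L$. The hypothesis on $L$ then forces $L = L \cap g^{-1}\Lambda g$, so $gLg^{-1} \leq \Lambda$. Since $g$ was arbitrary and $\Lambda$ is a subgroup, the normal closure of $L$ in $G$ lies in $\Lambda$. The main obstacle is the ball-by-ball commutator computation in step two; once this identity is pinned down, everything else is formal and follows from the Lemmas already proved.
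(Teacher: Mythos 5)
Your proposal is correct and follows essentially the same route as the paper's proof: the same use of Lemma \ref{lem-trans-2balls}, the same support-disjointness computation showing $[[g,\gamma^{kn}],h]=[g,h]$ for $g,h\in G_{B_1}$, the same appeal to Lemma \ref{lem-comm-double}, and the same finite-index argument for the second assertion.
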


\begin{proof}
Fix $\gamma \in \Lambda$ a translation. Thanks to Lemma \ref{lem-trans-2balls}, there exist disjoint balls $B_1, B_2$ and $n \in \mathbb{Z}$ such that $\gamma^n(B_1) \lneq B_2$ and $\gamma^n(B_2) \lneq B_2$. The obvious induction shows that we indeed have $\gamma^{kn}(B_1) \lneq B_2$ and $\gamma^{kn}(B_2) \lneq B_2$ for every $k \geq 1$.

Fix some $k \geq 1$. Taking $x\in G_{B_1}$, we consider the commutator $u_k := [x,\gamma^{kn}]$. On the complement of $B_1$, $x$ acts trivially, so $u_k$ acts trivially outside of $B_1 \cup \gamma^{kn}(B_1)$. The element $u_k$ additionally coincides with $x$ on $B_1$ and with $\gamma^{kn} x^{-1} \gamma^{-kn}$ on $\gamma^{kn}(B_1)$. 

For an arbitrary $y \in G_{B_1}$, we consider the element 
\[
v_k: = [u_k ,y] = [[x,\gamma^{kn}],y].
\]
As with $u_k$, the element $v_k$ acts trivially outside $B_1 \cup \gamma^{kn}(B_1)$. Furthermore, $v_k$ is also trivial on $\gamma^{kn}(B_1)$, because $y$ acts trivially on this ball. We thus deduce that $v_k$ acts trivially outside $B_1$ and acts on $B_1$ as $[x,y]$. Since the element $[x,y]$ is supported in $B_1$, we have $v_k = [x,y]$.

To conclude the proof, we apply Lemma \ref{lem-comm-double} to obtain some $k \geq 1$ such that $v_k = [[x,\gamma^{kn}],y]$ belongs to $\Lambda$. The previous paragraph now implies that $[x,y]$ is in $\Lambda$. Since $x$ and $y$ are arbitrary elements of $G_{B_1}$, we have proved that $\Lambda$ contains the commutator subgroup of $G_{B_1}$. 

If $L$ has no proper finite index subgroup, then since $\Lambda$ is commensurated by $G$, one must have $L \leq \Lambda \cap g \Lambda g^{-1}$ for every $g$ in $G$. The group $\Lambda$ thus contains all the conjugates of $L$ by elements of $G$, and so it contains the group generated by the conjugates.
\end{proof}

In the following result, $F_{d,k}$ and $T_{d,k}$ denote K.\ Brown's generalizations of Thompson's groups $F$ and $T$; see \cite{Brown}. The groups $F_{d,k}$ and $T_{d,k}$ are viewed as subgroups of $V_{d,k}$, so in particular they sit inside $\aaut$.

\begin{thm} \label{thm:no_trans}
Let $G$ denote one of following groups:
\begin{enumerate}[(i)]
\item the commutator subgroup $F_{d,k}'$ of the group $F_{d,k}$;
\item the group $T_{d,k}$ for $d,k$ with $\gcd(k,d-1)=1$;
\item the commutator subgroup $V_{d,k}'$ of the group $V_{d,k}$; or
\item the almost automorphism group $\aaut$;
\end{enumerate}
If $\Lambda \leq \aaut$ is commensurated by $G$ and contains a translation, then $\Lambda$ contains $G$.
\end{thm}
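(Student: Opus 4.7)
My plan is to apply Proposition~\ref{prop-tran-branch} to $\Lambda$ and $G$: since $\Lambda$ contains a translation, that proposition will produce a proper ball $B \subsetneq \bord$ satisfying $[G_B, G_B] \leq \Lambda$. The strategy is then to exhibit inside $[G_B, G_B]$ a non-trivial abstractly simple subgroup $L$ of $\aaut$, and to invoke the second assertion of Proposition~\ref{prop-tran-branch} to place the normal closure of $L$ in $G$ inside $\Lambda$. As each of the four candidate groups $G$ is known to be abstractly simple, this will force $\Lambda$ to contain all of $G$.

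To produce such an $L$, I would fix a proper sub-ball $B'' \subsetneq B$ and let $F_{d,k,B''}$ denote the natural copy of $F_{d,k}$ inside $F_{d,k} \leq \aaut$ consisting of the elements of $F_{d,k}$ supported on $B''$. I then set $L := [F_{d,k,B''}, F_{d,k,B''}] \cong F_{d,k}'$. By Brown's theorem, $F_{d,k}'$ is an infinite abstractly simple group, so $L$ is perfect and has no proper subgroup of finite index. The containment $L \leq G$ must be checked in each case: for $G = F_{d,k}'$ and $G = \aaut$ it is immediate, for $G = T_{d,k}$ it follows from $F_{d,k} \leq T_{d,k}$, and for $G = V_{d,k}'$ it follows from $F_{d,k}' = [F_{d,k}', F_{d,k}'] \leq V_{d,k}'$. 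Since every element of $L$ is supported on $B'' \subseteq B$, we have $L \leq G_B$, and perfection of $L$ yields $L = [L,L] \leq [G_B, G_B] \leq \Lambda$.

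With $L$ in hand, the second assertion of Proposition~\ref{prop-tran-branch} will place $N := \langle gLg^{-1} : g \in G \rangle$ inside $\Lambda$. The subgroup $N$ is normal in $G$ and contains the non-trivial group $L$, so by the abstract simplicity of $G$ — Brown's theorem in cases (i) and (ii), Higman's theorem in case (iii), and Theorem~\ref{cor-aaut-simple} in case (iv) — we conclude $N = G$, and hence $G \leq \Lambda$. The main point requiring care will be locating a perfect, abstractly simple subgroup of $[G_B, G_B]$ uniformly across the four cases; the choice $L \cong F_{d,k}'$ supported on a sub-ball $B'' \subsetneq B$ does the job precisely because $F_{d,k}'$ embeds in each of the four candidate groups $G$.
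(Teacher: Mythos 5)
Your argument is correct and follows essentially the same route as the paper: Proposition~\ref{prop-tran-branch} yields $[G_B,G_B]\leq\Lambda$, one locates a non-trivial perfect simple subgroup $L$ of $[G_B,G_B]$ lying in $G$, and the second assertion of that proposition together with the abstract simplicity of $G$ finishes the proof. The only (harmless) slip is in the labelling: the elements of $F_{d,k}$ supported on a proper ball $B''$ form a copy of $F_{d,d}$ rather than $F_{d,k}$, since the subtree below a non-root vertex is $\mc{T}_{d,d}$; the paper uses exactly this $F_{d,d}'$ in cases (i)--(ii) and the analogous $V_{d,d}'$ or $\mathrm{AAut}(\mc{T}_{d,d})$ in cases (iii)--(iv).
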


\begin{proof}
In each of these cases, the group $G$ is a simple group; see \cite{Brown} for $(i)-(iii)$ and Theorem~\ref{cor-aaut-simple} for $\aaut$. 

Since $\Lambda$ contains a translation, Proposition \ref{prop-tran-branch} ensures the existence of a proper ball $B$ of $\bord$ such that $[G_B,G_B] \leq \Lambda$. It is easy to see that the subgroup $G_B$ also contains a non-trivial simple group: For $G=F'_{d,k}$ or $G=T_{d,k}$, the group $G_B$ contains $F_{d,d}'$. In the case of $G=V_{d,k}'$ or $G=\aaut$, we have that $G_B$ contains $V_{d,d}'$ or $\mathrm{AAut}(\mc{T}_{d,d})$. 

The second statement of Proposition \ref{prop-tran-branch} now implies the group $\Lambda$ must contain the normal closure of some non-trivial subgroup of $G_B$ in $G$. As $G$ itself is simple, we conclude that $G\leq \Lambda$, completing the proof.
\end{proof}

\subsection{Elliptic commensurated subgroups} \label{sec-commensurated-LE}

We now consider commensurated subgroups $D\sleq \aaut$ which contain no translations. Corollary~\ref{cor:elliptic_sgrp} ensures the subgroup $D$ is indeed locally elliptic, so we will study commensurated locally elliptic subgroups. In view of Proposition~\ref{prop:contain U}, a commensurated locally elliptic subgroup can be extended by a compact open subgroup and remain both locally elliptic and commensurated. We thus consider locally elliptic subgroups that contain $\Aut(\Trd)$.

\subsubsection{Preliminaries} 
Recall that the $n$-th \textbf{spherical partition} of $\bord$ is 
\[
\mc{S}_n:=\{\partial\Trd^v\mid d(r,v)=n\}
\]
where $r$ is the root of $\Trd$. 

\begin{lem}\label{lem:spheres}
Suppose that $H\leq \aaut$ is locally elliptic and that $\Stab_H(\mc{S}_m)$ acts transitively on $\mc{S}_m$ for each $m\geq 1$. For each $h\in H$, there is then $n\geq 1$ such that $h\in \Stab(\Sc_n)$.
\end{lem}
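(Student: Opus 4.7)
The plan is to show that $h$ acts as an isometry on each ball of some admissible partition; by Lemma~\ref{lem:spherical_part}, this is equivalent to the conclusion $h\in\Stab(\Sc_n)$ for some $n$. Since $H$ is locally elliptic, Proposition~\ref{prop:char_1_elliptic} ensures $h$ is elliptic, so we may fix an admissible partition $\mc{P}$ with $h.\mc{P}=\mc{P}$. Writing $n_B$ for the level of $B\in\mc{P}$, the element $h$ then permutes the balls of $\mc{P}$ and acts on each $B$ as a homothety of ratio $\lambda_B=d^{n_B-n_{h(B)}}$.

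The main step is to prove that $\lambda_B=1$ for every $B\in\mc{P}$. Suppose not, and pick an $h$-cycle $B_0\to B_1\to\dots\to B_{k-1}\to B_0$ in $\mc{P}$ whose levels $n_i:=n_{B_i}$ are not all equal. Because the sequence $n_0,\dots,n_{k-1},n_0$ is cyclic and nonconstant, there must exist $j$ with $n_j>n_{j-1}$, so $h$ maps $B_{j-1}$ homothetically onto the strictly smaller ball $B_j$. The idea is now to invoke the transitivity hypothesis to produce a translation inside $H$, which will contradict local ellipticity.

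Since $B_{j-1}$ has level $n_{j-1}<n_j$, it properly contains some ball $B'\in\Sc_{n_j}$. By hypothesis, $\Stab_H(\Sc_{n_j})$ acts transitively on $\Sc_{n_j}$, so we may choose $g\in\Stab_H(\Sc_{n_j})$ with $g(B_j)=B'$. Because $g$ acts isometrically on $B_j\in\Sc_{n_j}$, the composition $gh$ restricts on $B_{j-1}$ to a homothety with image $B'\subsetneq B_{j-1}$; thus $gh$ is a translation in $\aaut$. However $gh\in H$ is locally elliptic, so Proposition~\ref{prop:char_1_elliptic} forces $gh$ to be elliptic, the desired contradiction.

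Consequently $h$ acts as an isometry on each $B\in\mc{P}$. Taking $N$ to be the maximum level appearing in $\mc{P}$, the spherical partition $\Sc_N$ refines $\mc{P}$ and is admissible for $h$. For each $B'\in\Sc_N$ contained in some $B\in\mc{P}$, the restriction $h_{\rest B'}$ is an isometry, so $h(B')$ has diameter $d^{-N}$ and lies in $\Sc_N$. Therefore $h.\Sc_N=\Sc_N$, giving $h\in\Stab(\Sc_N)$. The main conceptual obstacle is recognizing that transitivity of $\Stab_H(\Sc_m)$ on $\Sc_m$ should be combined with a non-isometric ratio to manufacture a translation; once this construction is in hand, the rest of the argument is a routine refinement to a spherical partition.
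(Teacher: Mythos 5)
Your proof is correct and uses the same key mechanism as the paper's: when $h$ fails to restrict to an isometry on some ball, the transitivity of $\Stab_H(\Sc_m)$ lets you compose $h$ with an element of $H$ to manufacture a translation inside $H$, contradicting local ellipticity via Proposition~\ref{prop:char_1_elliptic}. The only cosmetic difference is that you locate a strictly shrinking ball through the cycle structure of an $h$-invariant partition, whereas the paper extracts a non-isometric restriction directly from Lemma~\ref{lem:spherical_part} and treats the shrinking and expanding cases symmetrically.
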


\begin{proof}
Since $h$ is periodic, Proposition~\ref{prop:char_1_elliptic} ensures $h\in \Stab(\mc{P})$ for some regular partition $\mc{P}$. Suppose for contradiction that we cannot choose $\mc{P}$ to be a spherical partition. Lemma~\ref{lem:spherical_part} implies there is an admissible partition $\mc{Q}$ for $h$ and $B\in \mc{Q}$ such that $h_{\rest B}:B\rightarrow h(B)$ is not an isometry. The homothety $h_{\rest B}$ is thus such that the ball $h(B)$ has either strictly smaller or strictly larger diameter than that of $B$. As the cases are similar, let us suppose that the former holds.

Let $\Sc_m$ be a spherical partition refining $\mc{Q}$. Taking $C\in \Sc_m$ such that $C\subseteq B$, the ball $h(C)$ has strictly smaller diameter than $C$. We may then find $D\in \Sc_m$ such that $h(C)\subsetneq D$.  On the other hand, $\Stab_H(\Sc_m)$ acts transitively on $\Sc_m$, so there is $g\in \Stab_H(\Sc_m)$ such that $g(D)=C$. Forming the element $gh$, we see that $gh(C)\subsetneq C$, and thus, $gh$ is a translation, contradicting our assumption on $H$.
\end{proof}

Via Lemmas~\ref{lem:spheres} and \ref{lem:spherical_part}, each element $g$ of a locally elliptic subgroup containing $\Aut(\Trd)$ acts as an isometry on the parts of any admissible partition for $g$. This allows us to isolate a property incompatible with commensuration.

\begin{defn}
Suppose that $B$, $U$, and $W$ are balls in $\bord$. We say $h\in \aaut$ \textbf{breaks the tree below $B$} for $(U,W)$ if $\diam(U)=\diam(W)$ with $U,W\subseteq B$, the element $h$ fixes pointwise $W$, and $h$ acts as a homothety on $U$ with $B\cap h(U)=\emptyset$. We call a sequence of triples $((h_i,U_i,W_i))_{i\in I}$ a \textbf{breaking sequence} for $B$ if $h_i$ breaks the tree below $B$ for $(U_i,W_i)$ and $U_{i+1},W_{i+1}\subsetneq W_i$. 
\end{defn}
\begin{figure}[h]\label{fig:breaking_element}
    \centering
    \includegraphics[width=0.6\textwidth]{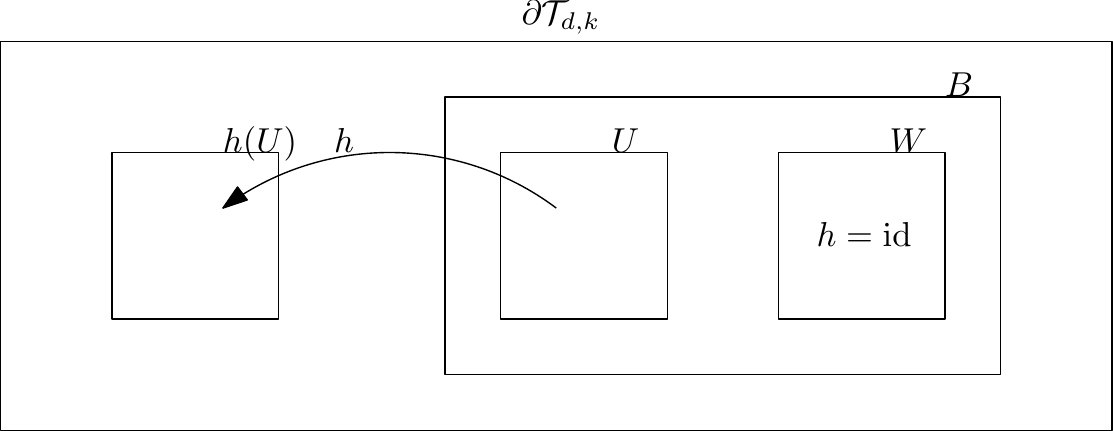}
    \caption{An element $h\in \aaut $ breaking the tree below $B$ for $U$ and $W$.}
 \end{figure}

\begin{lem}\label{lem:breaking_sequence}
Suppose that $H\leq \aaut$ is locally elliptic and  $\Stab_H(\mc{S}_m)$ acts transitively on $\mc{S}_m$ for each $m\geq 1$. If there is an infinite breaking sequence $((h_i,U_i,W_i))_{i\in \Nb}$ for a ball $B$ with $h_i\in H$ for all $i$, then $H$ is not commensurated.
\end{lem}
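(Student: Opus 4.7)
The plan is to show $H$ is not commensurated in $\aaut$ by constructing $g\in\aaut\setminus H$ for which the left $H$-orbit of $gH\in\aaut/H$ is infinite; the witnesses will come from (a subsequence of) the breaking sequence. Concretely, after passing to a subsequence we will exhibit indices $i_1<i_2<\cdots$ such that the cosets $h_{i_n}gH$ are pairwise distinct, which amounts to verifying $g^{-1}h_{i_m}^{-1}h_{i_n}g\notin H$ for all $m\neq n$.

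First, we simplify $H$. By Lemma~\ref{lem:spheres}, every $h\in H$ lies in some $\Stab(\Sc_n)$, so $H\subseteq \bigcup_n\Stab(\Sc_n)$. Applying Proposition~\ref{prop:contain U} with the compact open subgroup $\Aut(\mc{T}_{d,k})=\Stab(\Sc_1)$ and using the pigeonhole principle on the finite-index inclusion $H\cap H'\leq H$ to reassign coset representatives of the $h_i$, we may replace $H$ by a commensurable commensurated locally elliptic subgroup containing $\Aut(\mc{T}_{d,k})$ which still admits an infinite breaking sequence for some ball (possibly after modifying $B$). Thus assume $H\supseteq\Aut(\mc{T}_{d,k})$, which supplies many additional isometric elements and in particular all ``fixators of complements'' of sub-balls of $B$.

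Next, we choose $g$. The balls $W_1\supsetneq W_2\supsetneq\cdots$ strictly decrease with diameters tending to zero, so their intersection is a single boundary point $\xi\in B$. Pick $g\in\aaut$ to be a translation with attracting fixed point $\xi$ and repelling fixed point outside $B$, so that $g^n(B)\subsetneq B$ forms a basis of neighborhoods of $\xi$. Since $H$ is translation-free, $g\notin H$. The role of $g$ is to magnify the fine-scale action of the $h_i$ near $\xi$ into macroscopic behavior under conjugation, which is exactly what we need to separate the $h_i$ into distinct $H$-cosets.

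The technical heart, and the main obstacle, is to verify $g^{-1}h_{i_m}^{-1}h_{i_n}g\notin H$ for infinitely many pairs $m\neq n$. Since conjugation preserves periodicity and $h_{i_m}^{-1}h_{i_n}\in H$ is periodic, the conjugate is also periodic, so the translation/elliptic dichotomy of Proposition~\ref{prop:char_1_elliptic} does not apply directly. Instead, the plan is to exploit the specific breaking fingerprint of $h_{i_m}^{-1}h_{i_n}$ -- concentrated near $\xi$ at a depth determined by $i_m,i_n$ -- together with the dilation by $g$, which shifts this depth to a macroscopic scale. By Lemma~\ref{lem:spheres}, every element of $H$ must act as an isometry on each part of some admissible spherical partition; the plan is to show that, for appropriate indices, the conjugate $g^{-1}h_{i_m}^{-1}h_{i_n}g$ acts as a genuinely non-isometric homothety on some part of every admissible partition, placing it outside $\bigcup_n\Stab(\Sc_n)\supseteq H$. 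Formalizing this structural mismatch between the breaking profile of $h_{i_m}^{-1}h_{i_n}$ and the translation length of $g$ is the crux of the argument.
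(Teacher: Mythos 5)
The central step of your argument is missing: you explicitly defer the verification that $g^{-1}h_{i_m}^{-1}h_{i_n}g\notin H$ to an unformalized ``crux,'' and that verification is where essentially all the content of the lemma lies. Worse, your choice of $g$ as a hyperbolic translation attracted to $\xi=\bigcap_i W_i$ creates an obstacle that is not obviously surmountable: the homothety ratio of $g^{-1}h_{i_m}^{-1}h_{i_n}g$ on a small ball is the product of the ratio of $g$ near $\xi$, the ratio of $h_{i_m}^{-1}h_{i_n}$ on $U_{i_n}$, and the ratio of $g^{-1}$ at the image of $U_{i_n}$ outside $B$. Since you have no control over where $h_{i_n}(U_{i_n})$ lands relative to the repelling end of $g$, these ratios can cancel, in which case the conjugate lies in some $\Stab(\Sc_n)$ and your coset-separation fails. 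The hypotheses you are not exploiting are exactly the ones that resolve this: by Lemmas~\ref{lem:spheres} and~\ref{lem:spherical_part}, transitivity of $\Stab_H(\Sc_m)$ on $\Sc_m$ forces every $h\in H$ to act as an \emph{isometry} on each part of any admissible partition, so each $h_i(U_i)$ is a ball of diameter $\diam(U_i)$. The paper's proof then pigeonholes a subsequence so that all $h_i(U_i)$ land in a single part $B'$ of a fixed spherical partition (disjoint from $B$ after shrinking $B$ to $W_2$), and takes $g$ \emph{elliptic}: the identity on $B$ and a non-isometric homothety on $B'$. For $j>i$, the element $g h_j h_i^{-1} g^{-1}$ then acts on $U_j$ as a non-isometric homothety (identity, then an isometry into $B'$, then a non-isometry), so it cannot lie in $H$, yielding infinitely many cosets of $gHg^{-1}\cap H$ in $gHg^{-1}$ in one stroke.

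Your preliminary reduction to $H\supseteq\Aut(\Trd)$ is also illegitimate as stated: Proposition~\ref{prop:contain U} assumes $H$ is closed and commensurated, but commensuration is precisely what the lemma refutes (so the reduction would at minimum have to be recast inside an explicit contradiction argument), and closedness is not among the lemma's hypotheses. In the paper's proof no such reduction is needed; the additional isometries it would supply are already provided by the transitivity hypothesis on $\Stab_H(\Sc_m)$.
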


\begin{proof}
For each $k\in \Nb$, the sequence $((h_i,U_i,W_i))_{i>k}$ is a breaking sequence for the tree below $W_k$. By replacing $B$ with $W_2$ if necessary, we may assume $\diam(B) \leq d^{-2}$. 

Fix $m \geq 2$ such that the spherical partition $\Sc_m$ contains $B$. Lemmas~\ref{lem:spheres} and \ref{lem:spherical_part} imply that $h_i(U_i)$ is a ball of diameter $\diam(U_i)$ for each $i\in \Nb$. There is thus some $B_i\in \Sc_m$ such that $h_i(U_i)\subsetneq B_i$. Since $\Sc_m$ is finite, we may find $B'\in \Sc_m$ and an infinite subsequence  $(h_{i_j},U_{i_j},W_{i_j})$ such that $h_{i_j}(U_{i_j})\subsetneq B'$ for all $j$. By replacing $((h_i,U_i,W_i))_{i\in \Nb}$ with this subsequence, we assume that each term of $((h_i,U_i,W_i))_{i\in \Nb}$ is such that $h_i(U_i)\subsetneq B'$. 

Since $\diam(B')\leq d^{-2}$, there exists $g\in \aaut$ such that $\Sc_m$ is admissible for $g$, $g$ is the identity on $B$, and $g$ does not act like an isometry on $B'$. The element $gh_ig^{-1}$ does not act as an isometry on $g(U_i)=U_i$, so Lemma~\ref{lem:spheres} ensures that $gh_ig^{-1} \notin H$. For $j>i$, we further have that $U_j \subsetneq W_i$ and that $h_i$ fixes $W_i$ pointwise, and thus, $gh_jh_i^{-1}g^{-1}$ does not act as an isometry on $U_j$. Applying Lemma~\ref{lem:spheres} again, we deduce that $gh_jh_i^{-1}g^{-1} \notin H$.

Each $gh_ig^{-1}$ thus gives a distinct right coset of $H$. Hence, 
\[
|gHg^{-1}:gHg^{-1}\cap H|=|H:H\cap g^{-1}Hg|
\]
is infinite, showing that $H$ is not commensurated.
\end{proof}

We will also make use of a weak version of a breaking sequence. Set $K_{i+1}:=\Stab(\Sc_{i+1})\setminus \Stab(\Sc_i)$. Recalling that we fixed an embedding of $\Trd$ into the oriented plane, the spheres of $\Trd$ admit a linear ordering. For each spherical partition $\Sc_n$, let $R_n$ and $S_n$ be the parts corresponding to the penultimate and final elements of the ordering of the $n$-sphere of $\Trd$, respectively. Observe that $R_{n+1},S_{n+1}\subset S_n$.

\begin{lem}\label{lem:breaking element}
Suppose that $H$ is a non-compact locally elliptic subgroup which contains $\Aut(\Trd)$. There is then an infinite $I\subseteq \Nb$ where for each $n\in I$, there is $z\in H\cap K_{n}$ such that $z$ fixes pointwise $S_n$ and $z(R_n)\cap S_{n-1}=\emptyset$.
\end{lem}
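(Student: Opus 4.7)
The plan is to produce the set $I$ by a compactness argument and then, for each $n\in I$, to build $z$ by sandwiching an arbitrary $h\in H\cap K_n$ between two elements of $\Aut(\Trd)\leq H$. Since $\Aut(\Trd)\leq H$ acts transitively on every $\Sc_m$, Lemma~\ref{lem:spheres} yields $H=\bigcup_m H\cap\Stab(\Sc_m)$. As each $\Stab(\Sc_m)$ is compact and $H$ is not, $H\not\leq\Stab(\Sc_m)$ for any $m$, so $I:=\{n\in\Nb:H\cap K_n\neq\emptyset\}$ is infinite.

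Now I would fix $n\in I$ and $h\in H\cap K_n$, and let $\pi=\pi_h$ denote the induced permutation of $\Sc_n$. Because $h\notin\Stab(\Sc_{n-1})$, $\pi$ fails to descend to $\Sc_{n-1}$: there exist two siblings $a,b\in\Sc_n$ (common $\Sc_{n-1}$-parent) whose images lie in distinct $\Sc_{n-1}$-balls $\overline{\pi(a)}\neq\overline{\pi(b)}$. Since distinct $\Sc_{n-1}$-balls are disjoint, at most one of $\pi(a),\pi(b)$ meets $S_{n-1}$, so after swapping $a,b$ if necessary I arrange $\pi(a)\not\subset S_{n-1}$. By transitivity of $\Aut(\Trd)$ on ordered sibling pairs in $\Sc_n$, I then pick $\sigma\in\Aut(\Trd)$ with $\sigma(R_n)=a$ and $\sigma(S_n)=b$, and I pick $\tau\in\Aut(\Trd)$ whose restriction to the subtree below $v_{\pi(b)}$ is the inverse of the isometry $h\sigma|_{S_n}\colon S_n\to\pi(b)$. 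Such $\tau$ exists because any prescribed rooted-tree isomorphism between a single pair of subtrees of $\Trd$ extends to a global rooted automorphism. Finally, set $z:=\tau h\sigma$.

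All three factors lie in $H\cap\Stab(\Sc_n)$, and $z|_{S_n}=\mathrm{id}$ by construction. The crux is the disjointness: $z(R_n)=\tau(\pi(a))$, and the condition $\tau(v_{\pi(b)})=v_{S_n}$ forces the induced action of $\tau$ on $\Sc_{n-1}$ to send $\overline{\pi(b)}$ to $S_{n-1}$; by bijectivity of this action together with $\overline{\pi(a)}\neq\overline{\pi(b)}$, one gets $\tau(\overline{\pi(a)})\neq S_{n-1}$, so $z(R_n)\subset\tau(\overline{\pi(a)})$ is disjoint from $S_{n-1}$. In particular $z\notin\Stab(\Sc_{n-1})$, hence $z\in H\cap K_n$, and the lemma follows. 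The main subtle point is the choice of $\tau$: one imposes a single isometric condition on one subtree so as to enforce $z|_{S_n}=\mathrm{id}$, and the key observation is that this condition \emph{automatically} forces the permutation of $\Sc_{n-1}$ induced by $\tau$ to move $\overline{\pi(a)}$ off $S_{n-1}$. This is exactly what converts the failure of $\pi$ to preserve one sibling pair into the required outward displacement of $R_n$.
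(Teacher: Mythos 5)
Your proof is correct and follows essentially the same route as the paper's: extract the infinite set $I$ from non-compactness via Lemma~\ref{lem:spheres}, locate a sibling pair in $\Sc_n$ whose images under $h$ land in distinct $\Sc_{n-1}$-balls, and then pre- and post-compose with elements of $\Aut(\Trd)\leq H$ so that $S_n$ is fixed pointwise, which automatically pushes the image of $R_n$ off $S_{n-1}$. The only cosmetic difference is that you merge the paper's two pre-compositions (one aligning $(S_{n-1},S_n)$ with $(C,B)$, one sending $R_n$ to $D$) into a single $\sigma$, and your ``swap $a,b$'' step is in fact superfluous since the final argument only uses $\overline{\pi(a)}\neq\overline{\pi(b)}$.
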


\begin{proof}
Via Lemma~\ref{lem:spheres}, each element of $H$ is an element of some $\Stab(\Sc_n)$. Since $H$ is non-compact, there can be no upper bound on the least $n$ such that $h\in\Stab(\Sc_n)$ as $h$ ranges over $H$. It follows there is an infinite set $I\subseteq \Nb$ such that $H\cap K_n\neq \emptyset$ for all $n\in I$.

Fix $n\in I$ and take $z\in H\cap K_{n}$. Since $z$ is not an element of $\Stab(\Sc_{n-1})$, we may find $C\in \Sc_{n-1}$ and $B\neq D$ in $\Sc_n$ with $B, D\subset C$ such that $z(B)$ and $z(D)$ lie in different parts of $\Sc_{n-1}$. Multiplying by an appropriate element of $\Aut(\Trd)$, we may take $C=S_{n-1}$ and $B=S_{n}$. We may further find $u\in \Aut(\Trd)$ such that $uz$ fixes $S_{n}$ pointwise. The element $uz$ is thus such that $uz$ is an element of $K_{n}$, it fixes $S_{n}$ pointwise, and there is $D\subseteq S_{n-1}$ with $D\in \Sc_n$ such that $uz(D)\cap S_{n-1}=\emptyset$. The group $\Aut(\Trd)$ acts as the full symmetric group on the parts of $\Sc_n$ below $S_{n-1}$, so there is $v\in \Aut(\Trd)$ such that $v(R_n)=D$ and $v$ fixes $S_n$ pointwise. The element $uzv$ now satisfies $(2)$.
\end{proof}

\begin{defn}
For $n \geq 1$ and $H\leq \aaut$, a triple $(z,R_n,S_n)$ where $z\in H \cap K_{n}$ fixes pointwise $S_n$ and $z(R_n)\cap S_{n-1}=\emptyset$ is called a \textbf{weakly breaking triple} for $H$.
\end{defn}
Each $z\in K_{i+1}$ acts as a homothety on $\Sc_{i+1}$ by the definition of $\Stab(\Sc_{i+1})$. The almost automorphism $z$ in a weakly breaking triple $(z,R_{i+1},S_{i+1})$ therefore acts as a homothety on $R_{i+1}$.

Given a locally elliptic non-compact subgroup $H$ containing $\Aut(\Trd)$, Lemma~\ref{lem:breaking element} supplies a sequence of weakly breaking triples $(z_n,R_n,S_n)$ where $z_n \in H$ for all $n \in I$. We stress that there is no a priori reason for this sequence or any subsequence to be a breaking sequence.

Let us first observe an easy consequence of commensuration. The next lemma does not rely on any special property of weakly breaking triples. It uses only that $|H:H\cap gHg^{-1}|$ is finite.

\begin{lem}\label{lem:finite_depth}
Let $H\leq\aaut$ and take $g\in\aaut$ which commensurates $H$. There is then a finite set $Z$ of weakly breaking triples for $H$ and $N\geq 2$ such that for all weakly breaking triples $(h,R_k,S_k)$ with $k\geq N$ and $h\in H$, there is $(z,R_i,S_i)\in Z$ for which $hz^{-1}\in H\cap gHg^{-1}$.
\end{lem}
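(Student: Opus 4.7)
The plan is to exploit the commensuration of $H$ by $g$ directly. Since $g$ commensurates $H$, the index $|H:H\cap gHg^{-1}|$ is finite, so $H$ decomposes into finitely many right cosets $C_1,\dots,C_m$ of $H\cap gHg^{-1}$. I will organize the weakly breaking triples for $H$ according to this coset decomposition and extract one representative from each coset that contains a weakly breaking triple.

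Concretely, for each coset $C_j$ that contains an element appearing as the first coordinate of a weakly breaking triple for $H$, fix one such triple $(z_j,R_{i_j},S_{i_j})$. Let $Z$ be the resulting set, which is finite since $m<\infty$, and choose $N\geq 2$ to be any integer strictly larger than every index $i_j$ appearing in $Z$. Given any weakly breaking triple $(h,R_k,S_k)$ with $h\in H$ and $k\geq N$, the element $h$ itself witnesses that its own right coset contains a weakly breaking triple, so this coset is one of those from which a representative was chosen, say the same coset as some $z_j$. Hence $h$ and $z_j$ lie in a common right coset of $H\cap gHg^{-1}$, which says exactly that $hz_j^{-1}\in H\cap gHg^{-1}$.

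There is essentially no obstacle beyond unpacking definitions: the lemma reduces to the finiteness of $|H:H\cap gHg^{-1}|$ together with the observation that coset membership is determined by the first coordinate of a weakly breaking triple. The condition $k\geq N$ is not used in the matching argument itself; it only guarantees that the chosen representative $(z,R_i,S_i)\in Z$ has spherical index $i<k$, a separation useful for subsequent applications of the lemma (for instance, to feed disjoint data into the infinite breaking sequence framework of Lemma~\ref{lem:breaking_sequence}).
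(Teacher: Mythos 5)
Your proof is correct and takes essentially the same route as the paper's: both arguments reduce to the finiteness of $|H:H\cap gHg^{-1}|$ and sort the weakly breaking triples by their right cosets of $H\cap gHg^{-1}$ in $H$ (the paper builds $Z$ by a greedy recursion whose termination is forced by the distinct-coset observation, whereas you select one representative per coset directly). Your explicit choice of $N$ strictly above all spherical indices occurring in $Z$ also correctly secures the property $i<k$ that the subsequent remark and the proof of Theorem~\ref{thm:non-comm} rely on.
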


\begin{proof}
We recursively build the set $Z$ of weakly breaking triples. Take $n_1\in \Nb\setminus[0,2]$ least such that $(z_{1},R_{n_1},S_{n_1})$ is a weakly breaking triple with $z_{1}\in H$. Suppose that we have built our sequence up to $k$. If every weakly breaking triple $(y,R_m,S_m)$ for $m\in \Nb\setminus [0,n_k]$ and $y\in H$ is such that $yz_{i}^{-1}\in H\cap gHg^{-1}$ for some $1\leq i \leq k$, we stop. Else, find $n_{k+1}$ least in $\Nb\setminus[0,n_k]$ such that there is a weakly breaking triple $(z_{k+1},R_{n_{k+1}},S_{n_{k+1}})$ with $z_{k+1}\in H$ and $z_{k+1}z_{i}^{-1}\notin H\cap gHg^{-1}$ for all $1\leq i\leq k$.
	
	Our construction produces a set $Z$ of weakly breaking triples, and each group element $z$ appearing in a triple in $Z$ gives a distinct right coset of $H\cap gHg^{-1}$ in $H$. Since $|H:H\cap gHg^{-1}|$ is finite, the set $Z$ is finite, and thus, our construction procedure halts. That is to say, there is $N\geq 2$ such that for any $k\geq N$ and any weakly breaking triple $(h,R_k,S_k)$ with $h\in H$, we have that $hz^{-1}\in H\cap gHg^{-1}$ for some $(z,R_i,S_i)\in Z$.
\end{proof}

We now emphasize an important fact about weakly breaking triples: If $(a,R_i,S_i)$ and $(b,R_j,S_j)$ are weakly breaking triples with $j>i$, then $(ba,R_j,S_j)$ is a weakly breaking triple. In particular, in the setting of Lemma~\ref{lem:finite_depth}, if $(h,R_k,S_k)$ is weakly breaking with $k\geq N$ and $(z,R_i,S_i)\in Z$, then the triple $(hz^{-1},R_k,S_k)$ is weakly breaking.

\subsubsection{Main theorem}
We are now ready to prove the main theorem of this section. We shall use a general fact, which we leave as an exercise for the reader: There is an element $g\in \aaut$ admitting $\Sc_2$ as an admissible partition such that $g(S_n)=S_{n+1}$ and $g(R_n)=R_{n+1}$ for all $n\geq 2$. Such an element is called a \textbf{translation down the rightmost branch}.

\begin{thm}\label{thm:non-comm}
If $H\leq \aaut$ is locally elliptic, commensurated, and $\Aut(\Trd)\leq H$, then $H$ is compact.
\end{thm}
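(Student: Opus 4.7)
The plan is to argue by contradiction: assume $H$ is non-compact, produce an infinite breaking sequence in $H$ for a fixed ball, and then invoke Lemma~\ref{lem:breaking_sequence} to contradict commensuration of $H$. Since $H$ is locally elliptic and contains $\Aut(\Trd)$, Lemma~\ref{lem:breaking element} supplies weakly breaking triples $(z_n, R_n, S_n)$ with $z_n \in H$ for every $n$ in some infinite $I \subseteq \mathbb{N}$, and the inclusion $\Aut(\Trd) \leq \Stab_H(\Sc_m)$ automatically verifies the transitivity hypothesis of Lemma~\ref{lem:breaking_sequence}. Each $z_n$ on its own only breaks the single ball $S_{n-1}$, so the commensuration hypothesis must be used to promote these into elements of $H$ all breaking a common ball.

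Fix $g \in \aaut$ to be the translation down the rightmost branch described immediately before the theorem, so $g(S_n) = S_{n+1}$ and $g(R_n) = R_{n+1}$ for $n \geq 2$. Since $g$ commensurates $H$, Lemma~\ref{lem:finite_depth} produces a finite set $Z$ of weakly breaking triples and $N \geq 2$ such that every weakly breaking $(h, R_k, S_k)$ with $h \in H$ and $k \geq N$ satisfies $h z^{-1} \in H \cap g H g^{-1}$ for some $(z, R_i, S_i) \in Z$. Pigeonhole then yields a fixed $z^* \in Z$ and an infinite subsequence $(n_k)$ for which $w_k := z_{n_k}(z^*)^{-1}$ lies in $H \cap g H g^{-1}$ and remains weakly breaking whenever $n_k > i^*$, by the remark following Lemma~\ref{lem:finite_depth}. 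Setting $y_k := g^{-1} w_k g \in H$, the identities $g^{-1}(S_{n_k}) = S_{n_k - 1}$ and $g^{-1}(R_{n_k}) = R_{n_k - 1}$ combined with the fact that $w_k$ fixes $S_{n_k}$ pointwise show that $y_k$ fixes $S_{n_k - 1}$ pointwise and satisfies $y_k(R_{n_k - 1}) = g^{-1}(w_k(R_{n_k}))$.

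The goal is then to exhibit $(y_k, R_{n_k - 1}, S_{n_k - 1})_k$ as an infinite breaking sequence for the fixed ball $B := S_1$. The containments $R_{n_k - 1}, S_{n_k - 1} \subset S_1$, the matching diameters $d^{-(n_k - 1)}$, the nesting $R_{n_{k+1}}, S_{n_{k+1}} \subsetneq S_{n_k - 1}$, and the pointwise fixing of $S_{n_k - 1}$ by $y_k$ are all immediate. The \textbf{main obstacle} is the disjointness $y_k(R_{n_k - 1}) \cap S_1 = \emptyset$, equivalently $g^{-1}(w_k(R_{n_k})) \cap S_1 = \emptyset$: the weakly breaking condition only forces $w_k(R_{n_k})$ to avoid $S_{n_k - 1}$, so when this image happens to lie inside $g(S_1) \subseteq S_2$ (which still avoids $S_{n_k - 1}$ because it sits deep along the rightmost ray), the $g^{-1}$-preimage falls back into $S_1$. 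I plan to resolve this by strengthening the pigeonhole step, applying Lemma~\ref{lem:finite_depth} simultaneously to $g, g^2, \ldots, g^i$ and using that $H \cap \bigcap_{j=1}^{i} g^j H g^{-j}$ still has finite index in $H$, so that after refining the subsequence each $g^{-j} w_k g^j$ lies in $H$ for $j = 1, \ldots, i$. A bookkeeping argument on the depth along the rightmost ray at which $w_k(R_{n_k})$ sits then lets one choose $i$ large enough to pull $g^{-i}(w_k(R_{n_k}))$ entirely out of $S_1$, producing the desired breaking sequence. Lemma~\ref{lem:breaking_sequence} then contradicts commensuration of $H$, completing the proof.
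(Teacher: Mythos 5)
Your setup matches the paper's: assume non-compactness, harvest weakly breaking triples from Lemma~\ref{lem:breaking element}, fix the translation $g$ down the rightmost branch, and use Lemma~\ref{lem:finite_depth} to conjugate triples back into $H$. You also correctly identify the crux: a weakly breaking triple $(w,R_n,S_n)$ only guarantees $w(R_n)\cap S_{n-1}=\emptyset$, and the image may sit arbitrarily deep along the rightmost ray, so it need not escape any \emph{fixed} ball after conjugation. But your proposed fix does not close this gap. Write $\delta_k$ for the depth $\delta(w_k(R_{n_k}),R_{n_k})$. If $\delta_k$ is bounded by some $M$ along an infinite subsequence, then the original triples already form a breaking sequence for $S_{M+1}$ (a ball of diameter $d^{-n_k}$ not contained in $S_{M+1}$ is disjoint from it), and no conjugation is needed. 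The only hard case is $\delta_k\to\infty$, and there your plan fails: pulling $w_k(R_{n_k})$ out of $S_1$ requires roughly $\delta_k$ applications of $g^{-1}$, so the power $i$ must depend on $k$ and is unbounded, whereas applying Lemma~\ref{lem:finite_depth} to $g,g^2,\dots,g^i$ for a \emph{fixed} $i$ only controls finitely many powers; $\bigcap_{j\geq 1}g^jHg^{-j}$ need not have finite index in $H$. A diagonal variant (choosing $i_k$ per term) runs into a circularity --- the required power is only known after the triple is chosen, but membership of the $i_k$-fold conjugate in $H$ constrains which triples are usable for that power --- and additionally gives no control on the levels $n_k-i_k$, which must strictly increase for the nesting condition of a breaking sequence.

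The paper escapes this with a minimality argument using only \emph{one} conjugation by $g$. For $l>N$, Lemma~\ref{lem:breaking_sequence} itself forces the existence of some level $k>l$ and a weakly breaking $(h,R_k,S_k)$, $h\in H$, with $h(R_k)\subseteq S_{l-1}$ (otherwise those triples would already be a breaking sequence for $S_{l-1}$). Taking $k$ \emph{least} with this property and conjugating $hz^{-1}$ ($z\in Z$) once by $g^{-1}$ produces a weakly breaking triple at level $k-1\geq l$; minimality forces its image out of $S_{l-1}$, which pins down $\delta(h(R_k),R_k)=l-1$ exactly. Then $h(R_k)$ lies in a part of $\Sc_l$ below $S_{l-1}$ other than $S_l$, and an element of $\Aut(\Trd)$ fixing $S_l$ pointwise rotates it into $R_l$. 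Iterating gives triples with $w_{i+1}(R_{n_{i+1}})\subseteq R_{n_i}$, and the partial products $w_1\cdots w_i$ form the breaking sequence. You would need to replace your ``bookkeeping argument'' with something of this kind; as written, the step ``choose $i$ large enough'' is the missing proof.
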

\begin{proof}
Suppose toward a contradiction that $H$ is non-compact. Fix $g\in G$ a translation down the rightmost branch and let $Z$ and $N\geq 2$ be as given by Lemma~\ref{lem:finite_depth}. We will abuse notation and consider $Z\subseteq H$. In view of Lemma~\ref{lem:breaking element}, for each $k\geq N$ there are infinitely many weakly breaking triples of the form $(y,R_l,S_l)$ with $l\geq k$ and $y\in H$;  we will often implicitly use this fact.

For balls $B$ and $D$ in $\bord$, define $\delta(B,D)$ to be the greatest $l$ such that $B,D\subseteq S_l$; this function is only partially defined. The partial function $\delta$ allows us to find weakly breaking triples which cohere as follows.
\begin{claim*}
For all $l>N$, there is $k>l$ and $w\in H$ such that $(w,R_k,S_k)$ is a weakly breaking triple for $H$ with $w(R_k)\subset R_l$. 
\end{claim*}

\begin{proof}[Proof of claim.]
In view of Lemma~\ref{lem:breaking_sequence}, there is $k>l$ and a weakly breaking triple $(h, R_k,S_k)$ with $h\in H$ such that $h(R_k)\subseteq S_{l-1}$; else we can produce an infinite sequence breaking for $S_{l-1}$. Take $k\geq l$ to be least for which there is such a weakly breaking triple. Note that $k>l$ and that $\delta(h(R_k),R_k)\geq l-1$. 

Since $k>N$, there is $z\in Z$ such that $hz^{-1}\in H\cap gHg^{-1}$, and by construction, $(hz^{-1},R_k,S_k)$ is a weakly breaking triple. Taking $x\in H$ such that $hz^{-1}=gxg^{-1}$, that $g$ is a shift down the rightmost branch implies $(x,R_{k-1},S_{k-1})$ is a weakly breaking triple. The minimality of $k$ ensures that $x(R_{k-1}) \nsubseteq S_{l-1}$, hence 
\[
\delta(x(R_{k-1}),R_{k-1})=\delta(h(R_k),R_k)-1\leq l-1.
\]
We deduce that $\delta(h(R_k),R_k)=l-1$. 

The ball $h(R_k)$ is thus contained in some $B\in \Sc_l$ such that $B\subseteq S_{l-1}$ and $B\neq S_l$. Since $\Aut(\Trd)$ acts as the full symmetric group on the parts of $\Sc_l$ contained in $S_{l-1}$, there is $t\in \Aut(\Trd)$ such that $t(B)=R_l$ and $t$ fixes pointwise $S_l$. The triple $(th,R_{k},S_{k})$ therefore satisfies the claim.  
\end{proof}

Applying the claim repeatedly, we can find an infinite sequence $(n_i)_{i\in \Nb}$ such that $(w_{i},R_{n_i},S_{n_i})$ is a weakly breaking triple for $H$ and $w_{i+1}(R_{n_{i+1}})\subseteq R_{n_i}$ for all $i\in \Nb$. The sequence $((w_1\dots w_i,R_{n_i},S_{n_i}))_{i>0}$ is then an infinite breaking sequence for $S_{n_1}$ with $w_1\dots w_i\in H$ for all $i$. This is absurd in view of Lemma~\ref{lem:breaking_sequence}.
\end{proof}

\section{Commensurated subgroups of groups almost acting on trees} \label{sec-proofs-commens}

Using our results from the previous sections, we now obtain our results for commensurated subgroups of groups of almost automorphisms.

\subsection{Commensurated subgroups of $\aaut$}

\begin{thm}\label{thm-trichotomy}
	If $\Lambda\leq \aaut$ is commensurated, then either $\Lambda$ is finite, $\ol{\Lambda}$ is compact and open, or $\Lambda=\aaut$.
\end{thm}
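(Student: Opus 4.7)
The approach is to split into cases according to whether $\Lambda$ contains a translation. If $\Lambda$ contains a translation, Theorem~\ref{thm:no_trans} (applied in case $(iv)$ with $G = \aaut$) immediately yields $\aaut \subseteq \Lambda$, hence $\Lambda = \aaut$.

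In the remaining case, $\Lambda$ contains no translation, so by Corollary~\ref{cor:elliptic_sgrp} the subgroup $\Lambda$ is locally elliptic. My plan is to reduce to the compact commensurated setting and then invoke Theorem~\ref{thm-CRW2-finite-open}. The first step is to upgrade local ellipticity from $\Lambda$ to its closure $\ol{\Lambda}$. By Corollary~\ref{cor:elliptic_sgrp}, it suffices to verify that $\ol{\Lambda}$ contains no translation, and for this I would show that the set of translations is open in $\aaut$. Indeed, if $g^n(B) \subsetneq B$ with $g^n_{\rest B}$ a homothety, then for every $u$ in the open subgroup $\Stab(B)$ one has $u(B) = B$ and $u_{\rest B}$ an isometry, so $g^n u$ acts as a homothety on $B$ mapping it strictly inside itself; hence $g^n \Stab(B)$ is an open neighborhood of $g^n$ all of whose elements are translations, and pulling back along the continuous $n$th power map produces an open neighborhood of $g$ contained in the set of translations.

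With $\ol{\Lambda}$ now closed, commensurated (Lemma~\ref{lem:commensurated_closure}), and locally elliptic, I would apply Proposition~\ref{prop:contain U} with $U = \Aut(\Trd)$ to obtain a commensurated, locally elliptic subgroup $H' \leq \aaut$ containing $\Aut(\Trd)$ and satisfying $|\ol{\Lambda} : H' \cap \ol{\Lambda}| < \infty$. Theorem~\ref{thm:non-comm} then forces $H'$ to be compact, so $H' \cap \ol{\Lambda}$ is a compact subgroup of finite index in $\ol{\Lambda}$, and hence $\ol{\Lambda}$ itself is compact.

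To conclude, I would observe that $\aaut$ is compactly generated (generated, for instance, by the compact open subgroup $\Aut(\Trd)$ together with a finite generating set of the Higman--Thompson subgroup $V_{d,k}$) and abstractly simple by Theorem~\ref{cor-aaut-simple}. Theorem~\ref{thm-CRW2-finite-open} then applies to the compact commensurated subgroup $\ol{\Lambda}$ and shows it is either finite, in which case $\Lambda$ itself is finite, or open, in which case $\ol{\Lambda}$ is compact and open. The main obstacle I anticipate is cleanly establishing the openness of the set of translations, as this is what bridges the gap between the possibly non-closed $\Lambda$ and its closure, where the full strength of Proposition~\ref{prop:contain U}, Theorem~\ref{thm:non-comm}, and Theorem~\ref{thm-CRW2-finite-open} can be brought to bear.
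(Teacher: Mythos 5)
Your proof is correct and follows essentially the same route as the paper: the same case split on whether $\Lambda$ contains a translation, with Theorem~\ref{thm:no_trans} handling one case and Corollary~\ref{cor:elliptic_sgrp}, Proposition~\ref{prop:contain U}, Theorem~\ref{thm:non-comm}, and Theorem~\ref{thm-CRW2-finite-open} handling the other. The only difference is your (correct) observation that the set of translations is open, which lets you apply Proposition~\ref{prop:contain U} to the closed subgroup $\ol{\Lambda}$ rather than to $\Lambda$ itself as the paper does; this is a legitimate way to respect the closedness hypothesis of that proposition.
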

\begin{proof}
Suppose that $\Lambda$ is not finite. If $\Lambda$ contains a translation, then one must have $\Lambda=\aaut$ thanks to Theorem~\ref{thm:no_trans}. We thus suppose that $\Lambda$ is elliptic, which is equivalent to asserting that $\Lambda$ is locally elliptic by Corollary \ref{cor:elliptic_sgrp}. Proposition~\ref{prop:contain U} gives a locally elliptic, commensurated subgroup $H$ such that $\aut\leq H$ and $|\Lambda:\Lambda\cap H|$ is finite, and applying Theorem~\ref{thm:non-comm} to $H$, we deduce that $H$ must be compact. The subgroup $\Lambda$ is thus relatively compact. 

The closure $\ol{\Lambda}$ is commensurated via Lemma~\ref{lem:commensurated_closure}, so we infer that $\ol{\Lambda}$ is an infinite compact commensurated subgroup. As the group $\aaut$ is abstractly simple, we are in position to apply Theorem \ref{thm-CRW2-finite-open} to conclude that $\ol{\Lambda}$ is a compact open subgroup.
\end{proof}

\subsection{Thompson's groups} \label{subsec-thompson}

We here restrict ourselves to the case $d=k=2$ for ease of discourse, but the results can be easily adapted to the (commutator subgroups of the) groups $F_{d,k}$, $T_{d,k}$, and $V_{d,k}$. 

\begin{prop} \label{prop-commens-F}
Every commensurated subgroup of Thompson's group $F$ is a normal subgroup of $F$. 
\end{prop}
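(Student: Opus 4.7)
My plan is to reduce the proposition to a direct application of Theorem~\ref{thm:no_trans}, exploiting the fact that every nontrivial element of $F$ is a translation in $\aaut$.

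First, I will show that the only elliptic element of $F$ is the identity. Suppose $g\in F$ is elliptic, so there is an admissible partition $\mc{P}=\{B_1,\ldots,B_n\}$ with $g.\mc{P}=\mc{P}$. Because the elements of $F$ are orientation-preserving homeomorphisms of $[0,1]$, they preserve the linear order on $\bord$ induced by the fixed planar embedding of $\mathcal{T}_{d,k}$. The parts of $\mc{P}$ are then linearly ordered by this order, so an order-preserving permutation of them is trivial, which forces $g$ to fix each $B_i$ setwise. The restriction $g_{\rest B_i}\colon B_i\to B_i$ is then a homothety of a bounded metric space onto itself, hence an isometry. This isometry is order-preserving, so it corresponds to an order-preserving automorphism of the rooted subtree associated to $B_i$; the only such automorphism is the identity, and thus $g=\mathrm{id}$. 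Proposition~\ref{prop:char_1_elliptic} now implies that every nontrivial element of $F$ is a translation.

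Next, let $\Lambda\leq F$ be a nontrivial commensurated subgroup. By the previous step, $\Lambda$ contains a translation. Since the derived subgroup $F'$ is contained in $F$, the subgroup $\Lambda$ is also commensurated by $F'$, and Theorem~\ref{thm:no_trans}(i) applied with $G=F_{2,2}'=F'$ yields $F'\leq\Lambda$. Because $F/F'$ is abelian, every subgroup of $F$ containing $F'$ is normal in $F$, so $\Lambda$ is normal. The trivial subgroup is of course normal, completing the proof. The only substantive step in this plan is the reduction showing that elliptic elements of $F$ must be trivial; I expect this order-preserving argument to require the most care, since it depends on correctly interpreting the interplay between the planar embedding of $\mathcal{T}_{2,2}$, the dynamics on $\bord$, and the notion of homothety on a ball. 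Once that is in hand, everything else is an immediate appeal to Theorem~\ref{thm:no_trans} together with the abelianness of $F/F'$.
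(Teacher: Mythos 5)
Your proposal is correct and follows essentially the same route as the paper: the paper's proof simply asserts that every non-trivial element of $F$ is a translation and then invokes Theorem~\ref{thm:no_trans} together with the fact that subgroups containing $F'$ are normal. The only difference is that you supply a (correct) justification of the translation fact via order-preservation — elliptic elements of $F$ must fix each part of an invariant admissible partition and act as order-preserving isometries there, hence trivially — a step the paper leaves implicit.
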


\begin{proof}
Suppose that $\Lambda$ is a non-trivial commensurated subgroup of $F$. Since every non-trivial element of $F$ is a translation, Theorem \ref{thm:no_trans} implies that $\Lambda$ contains the derived subgroup of $F$ and is therefore normal in $F$. 
\end{proof}

\begin{prop} \label{prop-commens-T}
Every commensurated subgroup $\Lambda$ of $T$ is either finite or equal to $T$. 
\end{prop}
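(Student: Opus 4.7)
The proof divides naturally into two cases depending on whether $\Lambda$ contains a translation in the sense of $\aaut$. Since $T = T_{2,2}$ satisfies $\gcd(k,d-1) = \gcd(2,1) = 1$, Theorem \ref{thm:no_trans} applies with $G = T$: if $\Lambda$ contains a translation, then $T \leq \Lambda$, forcing $\Lambda = T$.

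Assume instead that $\Lambda$ contains no translation. By Corollary \ref{cor:elliptic_sgrp}, $\Lambda$ is locally elliptic in $\aaut$, and the goal is to show $\Lambda$ is finite. A first step is to verify that $T$ sits discretely in $\aaut$. Any $g \in T \cap \Stab(\Sc_1)$ preserves each of the two halves of $\bord$ and acts as an isometry on each; being piecewise linear on dyadic arcs with orientation-preserving power-of-two slopes, all local slopes must equal $1$, so $g$ is the identity. Thus the compact open subgroup $\Stab(\Sc_1)$ intersects $T$ trivially, and $T$ is discrete in $\aaut$. A finitely generated subgroup $F \leq \Lambda$ then has compact closure in $\aaut$ by local ellipticity while being discrete, so $F$ is finite; hence $\Lambda$ is locally finite.

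The main obstacle is to upgrade $\Lambda$ locally finite to $\Lambda$ finite. My plan is to exploit the classical fact that finite subgroups of $T$ are cyclic, so that $\Lambda$ is the directed union of finite cyclic subgroups and is therefore locally cyclic and abelian. If $\Lambda$ were infinite, it would contain a Pr\"ufer subgroup $\mathbb{Z}/p^\infty$ for some prime $p$. Combined with the description of torsion elements of $T$ as being conjugate in $\mathrm{Homeo}^+(S^1)$ to finite-order rotations, this subgroup is conjugate to a rotation subgroup of $S^1$ whose commensurator is severely limited. The final step would exhibit concrete elements $g \in T$ that act nontrivially on a proper sub-arc and whose centralizer in the relevant rotation group is trivial, forcing $g \Lambda g^{-1} \cap \Lambda$ to be finite and contradicting commensuration. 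The crux therefore lies in combining the cyclic-finite-subgroup result with an explicit non-commensuration argument for rotation subgroups inside $T$, a step that uses specific dynamical features of the $T$-action on the circle rather than the general theory developed earlier in the paper.
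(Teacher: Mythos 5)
Your first case (a translation forces $\Lambda = T$) and your reduction of the elliptic case to ``$\Lambda$ is locally finite, hence locally cyclic and abelian'' both match the paper, although your claim that $T\cap \Stab(\Sc_1)$ is trivial overlooks the half-rotation, which swaps the two balls of $\Sc_1$ and lies in $T\cap\Aut(\mc{T}_{2,2})$; this is harmless, since a finite intersection with a compact open subgroup still yields discreteness, and the paper gets local finiteness directly from Corollary~\ref{cor-torion-Vdk}.

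The genuine gap is the assertion that an infinite $\Lambda$ ``would contain a Pr\"{u}fer subgroup $\Zb/p^\infty$.'' This is false for locally cyclic torsion abelian groups: such a group is a subgroup of $\Qb/\Zb$, i.e.\ of the form $\bigoplus_p \Lambda_p$ with each $\Lambda_p$ a finite cyclic $p$-group or a Pr\"{u}fer $p$-group, and it can be infinite with every $\Lambda_p$ finite (e.g.\ $\bigoplus_p \Zb/p\Zb$). Your argument says nothing about this case, which the paper handles separately: commensuration forces each $g\in T$ to normalize a finite-index subgroup of the form $\bigoplus_{p\in\pi}\Lambda_p$ with $\pi$ cofinite, so finite generation of $T$ yields a finite-index subgroup $\Lambda'\leq\Lambda$ normalized by all of $T$, whence $\Lambda'=1$ by simplicity and $\Lambda$ is finite. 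Moreover, even in the Pr\"{u}fer case your dynamical plan (conjugating to rotations and finding $g$ with $g\Lambda g^{-1}\cap\Lambda$ finite) is only a sketch; no such $g$ is exhibited. The paper's treatment of that case is short and purely group-theoretic: a Pr\"{u}fer group has no proper finite-index subgroup, so $\Lambda_p\leq g\Lambda g^{-1}$ for every $g\in T$, forcing $\Lambda$ to contain the normal closure of $\Lambda_p$, which is all of $T$ by simplicity --- impossible since $\Lambda$ is locally finite. You should replace the Pr\"{u}fer-only dichotomy with the full primary decomposition and supply arguments for both branches.
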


\begin{proof}
If the group $\Lambda$ contains a translation, then Theorem \ref{thm:no_trans} implies that $\Lambda$ must be equal to $T$. We thus suppose that $\Lambda$ is an elliptic subgroup. 

Since $\Lambda$ contains no translations, Corollary \ref{cor-torion-Vdk} implies every finitely generated subgroup of $\Lambda$ is finite, and hence cyclic since $\Lambda \leq T$. In particular, $\Lambda$ is abelian and locally cyclic. Taking the primary decomposition $\Lambda = \bigoplus \Lambda_p$, each $\Lambda_p$ is either a Pr\"{u}fer $p$-group or a finite cyclic $p$-group.

Suppose first that there is a prime $p$ such that $\Lambda_p$ is a Pr\"{u}fer $p$-group. The group $\Lambda_p$ thus has no proper finite index subgroup. Since $\Lambda$ is commensurated in $T$, we infer that $\Lambda_p \leq \Lambda \cap g \Lambda g^{-1}$ for every $g \in T$. In particular, $\Lambda$ contains a non-trivial normal subgroup of $T$, which is absurd because $T$ is simple.

It is thus the case that $\Lambda_p$ is finite cyclic for every prime $p$, i.e.\ $\Lambda$ is a direct sum of finite cyclic $p$-groups. Combining this decomposition with the fact that $\Lambda$ is commensurated, every $g \in T$ must indeed normalize a, necessarily finite index, subgroup of $\Lambda$ of the form $\bigoplus_{p\in \pi}\Lambda_p$ where $\pi$ includes all but finitely many primes. Since $T$ is finitely generated, we deduce the existence of a finite index subgroup $\Lambda'$ of $\Lambda$ that is normalized by $T$. The group $T$ is simple,  so $\Lambda' = 1$. Therefore, $\Lambda$ is finite.
\end{proof}

\begin{prop} \label{prop-commens-V}
Every proper commensurated subgroup of $V$ is locally finite. 
\end{prop}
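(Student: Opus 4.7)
The strategy mirrors the proofs of Propositions~\ref{prop-commens-F} and~\ref{prop-commens-T}: split according to whether the proper commensurated subgroup $\Lambda \leq V$ does or does not contain a translation of $\aaut$.

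Suppose first that $\Lambda$ contains some translation. Since $\Lambda$ is commensurated by $V$, it is in particular commensurated by the commutator subgroup $V' = V_{2,2}'$, so Theorem~\ref{thm:no_trans} case (iii) forces $V_{2,2}' \leq \Lambda$. Because $d = 2$, the quotient $V_{2,2}/V_{2,2}'$ is trivial by the classical result of Higman, i.e.\ $V = V_{2,2} = V_{2,2}'$, and we deduce $\Lambda = V$, contradicting the assumption that $\Lambda$ is proper.

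It remains to treat the case where $\Lambda$ contains no translations. Proposition~\ref{prop:char_1_elliptic} then implies every element of $\Lambda$ is elliptic in $\aaut$. As noted in the discussion following Corollary~\ref{cor:elliptic_sgrp}, the elliptic elements of the Higman-Thompson group $V_{d,k}$ are exactly its torsion elements. Hence $\Lambda$ is a torsion subgroup of $V$, and Corollary~\ref{cor-torion-Vdk} immediately yields that $\Lambda$ is locally finite.

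I do not anticipate any substantive obstacle; once the translation/no-translation dichotomy is set up, both branches reduce essentially immediately to results already established in the paper. The only minor points are to invoke the simplicity of $V = V_{2,2}$ in order to close the translation case and to notice that commensuration by $V$ entails commensuration by $V'$, so that Theorem~\ref{thm:no_trans} case (iii) does apply.
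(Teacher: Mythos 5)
Your proof is correct and follows essentially the same route as the paper's: rule out translations via Theorem~\ref{thm:no_trans} (using the simplicity of $V=V_{2,2}=V_{2,2}'$ to close that branch), then conclude that $\Lambda$ is elliptic, hence torsion, hence locally finite by Corollary~\ref{cor-torion-Vdk}. The paper's version is just more terse; your explicit handling of the translation case and of why case (iii) of Theorem~\ref{thm:no_trans} applies is exactly the intended argument.
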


\begin{proof}
A proper commensurated subgroup of $V$ must contain only elliptic elements by Theorem \ref{thm:no_trans}, and such a subgroup must be locally finite via Corollary \ref{cor-torion-Vdk}. 
\end{proof}

Propositions \ref{prop-commens-F} and \ref{prop-commens-T} give a complete description of the commensurated subgroups of Thompson's groups $F$ and $T$. Although Proposition \ref{prop-commens-V} establishes some restrictions for the commensurated subgroups of $V$, we are not able to obtain a complete classification. 

There does exist an infinite commensurated subgroup in $V$, namely the group of finitary automorphisms of the tree $\mathcal{T}_{2,2}$; see \cite[Example 6.7, Proposition 7.11]{LB14}. The group $V$ thus has at least three commensurability classes of commensurated subgroups: the trivial group, the group of finitary automorphisms of $\mathcal{T}_{2,2}$, and the entire group $V$. We thus arrive at an interesting, open question.

\begin{quest} \label{quest-commens-V}
Does Thompson's group $V$ have more than three commensurability classes of commensurated subgroups?
\end{quest}

Thanks to the process of Schlichting completion \cite{schlich-comple}, a related problem is the following:

\begin{quest} \label{quest-dense-embed-V}
Are there non-discrete locally compact groups other than $\mathrm{AAut}(\mathcal{T}_{2,2})$ into which Thompson's group $V$ embeds densely ?
\end{quest}


The combination of Theorem \ref{thm:no_trans} and Corollary \ref{cor:elliptic_sgrp} may be applied to other interesting finitely generated subgroups of $\aaut$. One example of such a group is the finitely presented simple group $V_{\mathcal{G}}$ containing the Grigorchuk group $\mathcal{G}$ constructed by C. R\"{o}ver \cite{Rov-cfpsg}. Theorem \ref{thm:no_trans} shows any proper commensurated subgroup $\Lambda \lneq V_{\mathcal{G}}$ contains only elliptic elements, and Corollary \ref{cor:elliptic_sgrp} implies that any finitely generated subgroup of $\Lambda$ sits inside some permutational wreath product $\mathcal{G} \wr \mathrm{Sym}(n)$. In particular, since $\mathcal{G}$ is a torsion group, it follows that $\Lambda$ is torsion. 

Question \ref{quest-commens-V} can be posed for the group $V_{\mathcal{G}}$. Since the Grigorchuk group is commensurated in $V_{\mathcal{G}}$ \cite{Rov-cfpsg} (see also \cite[Proposition 7.11]{LB14}), there are at least three commensurability classes of commensurated subgroups in $V_{\mathcal{G}}$. We do not know whether there are more than three. 

\section{Applications}

Our study of commensurated subgroups concludes by considering several applications.  We first study embeddings into \tdlc groups.

\begin{prop} \label{prop-closed-image}
Let $G$ be a \tdlc group such that every proper commensurated open subgroup of $G$ is compact. Then every continuous homomorphism $\varphi :G\rightarrow H$ with $H$ a \tdlc group has closed image.
\end{prop}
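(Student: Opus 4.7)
Fix a compact open subgroup $K\leq H$ and set $U:=\varphi^{-1}(K)$. The first step is to observe that $U$ is an open commensurated subgroup of $G$. Openness is immediate from continuity of $\varphi$. For commensuration, I will verify by direct computation that $gUg^{-1}=\varphi^{-1}(\varphi(g)K\varphi(g)^{-1})$ and hence
\[
U\cap gUg^{-1}=\varphi^{-1}\bigl(K\cap \varphi(g)K\varphi(g)^{-1}\bigr).
\]
This gives a natural injection of coset spaces $U/(U\cap gUg^{-1})\hookrightarrow K/(K\cap \varphi(g)K\varphi(g)^{-1})$, so that $|U:U\cap gUg^{-1}|$ is bounded by the corresponding index in $K$, which is finite since $K$ is commensurated in $H$.

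By the hypothesis on $G$, the subgroup $U$ is therefore either compact or equal to $G$. If $U=G$ holds for \emph{every} compact open subgroup $K\leq H$, then $\varphi(G)$ is contained in every compact open subgroup of $H$; since such subgroups form a neighborhood basis of the identity in the \tdlc group $H$, this forces $\varphi(G)=\{e_H\}$, which is trivially closed. I may therefore assume $K$ has been chosen so that $U=\varphi^{-1}(K)$ is compact. Then $\varphi(U)$ is a compact, hence closed, subgroup of $K$, and the tautological equality $\varphi(G)\cap K=\varphi(U)$ (since $\varphi(g)\in K$ iff $g\in U$) shows that $\varphi(G)\cap K$ is closed in $K$.

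It remains to prove $\overline{\varphi(G)}\subseteq \varphi(G)$. Given $y\in \overline{\varphi(G)}$, the open neighborhood $yK$ of $y$ meets $\varphi(G)$, so there is $g\in G$ with $\varphi(g)^{-1}y\in K$. Left translation by $\varphi(g)^{-1}\in \varphi(G)$ preserves $\overline{\varphi(G)}$, so $\varphi(g)^{-1}y\in \overline{\varphi(G)}\cap K$. Now because $K$ is open in $H$, any net in $\varphi(G)$ converging to a point of $K$ lies eventually in $K$; this yields $\overline{\varphi(G)}\cap K=\overline{\varphi(G)\cap K}=\overline{\varphi(U)}=\varphi(U)\subseteq \varphi(G)$. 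Hence $\varphi(g)^{-1}y\in \varphi(G)$, and therefore $y\in \varphi(G)$.

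The only non-formal ingredient is the first paragraph: the verification that $\varphi^{-1}(K)$ is commensurated in $G$, which is where the commensurability of a generic compact open subgroup in $H$ is converted into a commensurability property inside $G$, allowing the hypothesis to be applied. Everything else is a standard manipulation of open subgroups and closures in \tdlc groups.
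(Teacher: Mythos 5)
Your proof is correct and follows essentially the same route as the paper: pull back a compact open subgroup of $H$, note the preimage is open and commensurated, apply the hypothesis to conclude it is compact or all of $G$, and deduce closedness of the image. The only difference is that you spell out two steps the paper leaves as standard facts (that preimages of commensurated subgroups are commensurated, and that a subgroup meeting an open subgroup in a closed set is closed), and both verifications are accurate.
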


\begin{proof}
Choose a compact open subgroup $U$ of $H$ and consider $\varphi^{-1}(U)$ in $G$. Since the preimage of a commensurated subgroup remains commensurated, $\varphi^{-1}(U)$ is commensurated in $G$. By continuity, $\varphi^{-1}(U)$ is open, so it must be either compact or equal to $G$. If $\varphi(G) \leq U$ for every $U$, then one has $\varphi(G)=1$ since compact open subgroups form a basis at $1$. Otherwise there is $U$ such that $\varphi^{-1}(U)$ is compact, and  
\[
\varphi(\varphi^{-1}(U)) = U \cap \mathrm{Im}(\varphi)
\]
is compact. The image $\mathrm{Im}(\varphi)$ is thus closed, and the desired result follows.
\end{proof}

Via Theorem~\ref{thm:aaut-intro}, the group $\aaut$ satisfies the assumption of Proposition \ref{prop-closed-image}.

\begin{cor} \label{cor-aaut-closed-im}
Every continuous homomorphism $\varphi : \aaut \rightarrow H$ with $H$ a \tdlc group has closed image.
\end{cor}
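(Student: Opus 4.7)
The plan is to show that $\aaut$ satisfies the hypothesis of Proposition~\ref{prop-closed-image} and then invoke that proposition. Specifically, I need to verify that every proper commensurated open subgroup of $\aaut$ is compact.

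Let $\Lambda \leq \aaut$ be a proper commensurated open subgroup. Since $\aaut$ is non-discrete (it contains the infinite compact open subgroup $\Aut(\Trd)$), no open subgroup can be finite. A standard fact from topological group theory is that every open subgroup of a topological group is also closed, so $\Lambda = \ol{\Lambda}$. Applying Theorem~\ref{thm-trichotomy}, since $\Lambda$ is proper and not finite, we conclude that $\ol{\Lambda} = \Lambda$ is compact and open. Hence every proper commensurated open subgroup of $\aaut$ is compact, as required.

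With this verification in hand, the corollary follows by a direct appeal to Proposition~\ref{prop-closed-image} applied to $G = \aaut$ and the continuous homomorphism $\varphi : \aaut \to H$. There is no real obstacle in this argument; the only subtlety worth mentioning is ensuring that the reader recognizes both that $\aaut$ is non-discrete (so that the ``finite'' case of the trichotomy is ruled out for open subgroups) and that open subgroups are automatically closed (so that passing to $\ol{\Lambda}$ in Theorem~\ref{thm-trichotomy} does not enlarge $\Lambda$). The substantive content has already been carried out in Theorem~\ref{thm-trichotomy} and Proposition~\ref{prop-closed-image}; this corollary is essentially a one-line combination of the two.
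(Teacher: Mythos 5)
Your proposal is correct and follows exactly the paper's route: the paper likewise deduces the corollary by noting that Theorem~\ref{thm-trichotomy} gives the hypothesis of Proposition~\ref{prop-closed-image} for $\aaut$. Your extra remarks (open subgroups are closed, and non-discreteness rules out the finite case) are just the details the paper leaves implicit.
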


We emphasize that there are generalizations of the group $\aaut$ for which Corollary~\ref{cor-aaut-closed-im} does not hold. For a permutation group $D \leq \Sd$, we set $W(D)$ to be the natural profinite completion of the iterated permutational wreath products of copies of $D$. The group $\Daaut$ is then defined to be the subgroup of $\aaut$ consisting of almost automorphisms acting locally by an element of $W(D)$. When $D$ is trivial, $\Daaut$ is the Higman-Thompson group $V_{d,k}$. When $D = \mathrm{Sym}(d)$, we have $\Daaut = \aaut$. The group $\Daaut$ further admits a locally compact group topology. We refer to \cite{CdM11} for more details; see also \cite{LB14} and \cite{Sauer-Thu}. Corollary~\ref{cor-aaut-closed-im} \textit{does not} hold in general for the groups $\Daaut$. Indeed, the embeddings $\Daaut \hookrightarrow \aaut$ are continuous and have dense image since $\Daaut$ always contains $V_{d,k}$, which is dense in $\aaut$. Corollary \ref{cor-aaut-closed-im} thus fails when $D$ is not equal $\mathrm{Sym}(d)$. 

\begin{proof}[Proofs of Corollaries~\ref{cor-intro-embed-T} and \ref{cor-intro-embed-F}] 
According to Proposition \ref{prop-commens-T}, the assumption of Proposition \ref{prop-closed-image} also holds true for the group $T$. Since any countable closed subgroup of a locally compact group must be discrete, this shows that any embedding of the group $T$ into a \tdlc group must have discrete image. This proves Corollary \ref{cor-intro-embed-T}. 

The proof of Corollary \ref{cor-intro-embed-F} follows the same lines. By applying Proposition \ref{prop-commens-F}, we obtain that any embedding of $F$ into a \tdlc group $H$ must have discrete image. Since $F$ is torsion-free, this implies that $F$ must intersect trivially any compact open subgroup of $H$.
\end{proof}

For our second application, we consider lattice embeddings. Lattice embeddings of discrete groups are of particular interest; cf.\ \cite{BFS15}. Our results place additional restrictions on such embeddings for Thompson's group $T$. 

Recall that by work of V. P. Platonov \cite{P_66}, every locally compact group admits a unique maximal locally elliptic normal subgroup, called the locally elliptic radical. This subgroup is also closed.

\begin{thm}\label{thm:latticesT}
Suppose that $G$ is a compactly generated locally compact group admitting $T$ as a lattice and denote by $R$ the locally elliptic radical of $G$. Then $R$ is compact, $G/R$ is a \tdlc group with a unique minimal non-trivial closed normal subgroup $H$, and $H$ satisfies the following properties:
\begin{enumerate}
	\item $H$ is a compactly generated topologically simple \tdlc group;
	\item $H$ is cocompact in $G/R$ and contains $T$ as a lattice.
\end{enumerate}  
\end{thm}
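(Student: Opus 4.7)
My plan is to iteratively apply two tools: the simplicity of $T$ (so any subgroup of the form $T \cap N$ for a closed normal $N$ in an ambient group containing $T$ is trivial or all of $T$) and Corollary~\ref{cor-intro-embed-T} (so any injection $T \hookrightarrow L$ into a t.d.l.c. group has discrete image). First I would show $T \cap R = 1$: if instead $T \subseteq R$, then $T$ is locally elliptic, and since $T$ is discrete in $G$, every finitely generated subgroup of $T$ would be finite, making $T$ locally finite and contradicting the existence of non-torsion elements in $T$.

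Next I would show $G^\circ$ is compact, which will give $G^\circ \leq R$ and hence $G/R$ t.d.l.c. The same simplicity argument applied to $G^\circ$ gives $T \cap G^\circ \in \{1,T\}$. In the first case, the injection $T \hookrightarrow G/G^\circ$ has discrete image by Corollary~\ref{cor-intro-embed-T}, so $TG^\circ$ is closed in $G$; standard lattice theory then implies $T \cap G^\circ = 1$ is a lattice in $G^\circ$, so $G^\circ$ is compact. In the second case, $T \subseteq G^\circ$ forces $G/G^\circ$ to admit $\{1\}$ as a lattice, hence $G/G^\circ$ is compact and $T$ is a lattice in the compactly generated connected l.c. group $G^\circ$. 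Applying Gleason--Yamabe, one finds a compact characteristic subgroup $K \trianglelefteq G^\circ$ with $G^\circ/K$ a connected Lie group; as $K$ is compact and $T$ infinite discrete, $T \cap K = 1$, so $T$ embeds as a lattice in the connected Lie group $G^\circ/K$. Such lattices are residually finite by Malcev's theorem, contradicting that $T$ is an infinite simple group. Hence $G^\circ$ is compact in either case.

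Once $G/R$ is known to be t.d.l.c., Corollary~\ref{cor-intro-embed-T} applied to $T \hookrightarrow G/R$ gives discrete image, so $TR$ is closed in $G$; lattice theory then makes $T \cap R = 1$ a lattice in $R$, and $R$ is compact. Thus $\bar{G} := G/R$ is a compactly generated t.d.l.c. group containing $T$ as a lattice, and its locally elliptic radical is trivial by maximality of $R$. Repeating the same dichotomy inside $\bar{G}$: any nontrivial closed normal subgroup $N \trianglelefteq \bar{G}$ either contains $T$, or satisfies $T \cap N = 1$; in the latter case Corollary~\ref{cor-intro-embed-T} applied to $T \hookrightarrow \bar{G}/N$ makes $N$ compact, and since $\bar{G}$ has trivial locally elliptic radical, $N$ is trivial. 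So every nontrivial closed normal subgroup of $\bar{G}$ contains $T$, and their intersection $H$ is the unique minimal such subgroup; since $H \supseteq T$ is closed and cocompact in $\bar{G}$, $H$ is compactly generated and contains $T$ as a lattice.

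The hard part will be verifying topological simplicity of $H$. The locally elliptic radical of $H$ is trivial (it is characteristic in $H$, hence normal in $\bar{G}$, hence contained in the trivial locally elliptic radical of $\bar{G}$), so the argument of the previous paragraph applied inside $H$ rules out any proper closed normal $N \trianglelefteq H$ with $T \cap N = 1$. The remaining case to exclude is a proper closed normal $N \trianglelefteq H$ with $T \subseteq N$, in which $N$ is cocompact in $H$. Here, the $\bar{G}$-normal closure of $N$ is a closed normal subgroup of $\bar{G}$ inside $H$, so by minimality it equals $H$; the $\bar{G}$-core $\bigcap_{g \in \bar{G}} gNg^{-1}$ is likewise closed and $\bar{G}$-normal in $H$, so it is either $H$ (giving $N = H$, a contradiction) or trivial. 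In the trivial-core case, one obtains a continuous embedding $H \hookrightarrow \prod_{g \in \bar{G}} H/gNg^{-1}$ into a product of compact groups, and the presence of the non-compact discrete lattice $T \subseteq H$ together with the triviality of the locally elliptic radical of $H$ should yield a contradiction via the monolith theory of compactly generated t.d.l.c. groups.
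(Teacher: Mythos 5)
Your architecture agrees with the paper's up to the last step, and where it differs the differences are sound: you repeatedly use Corollary~\ref{cor-intro-embed-T} together with the standard fact that $\Gamma\cap N$ is a lattice in $N$ whenever $\Gamma N$ is closed, whereas the paper argues directly with commensurated subgroups (for any compact open $U$ and non-compact closed normal $N$, the subgroup $UN$ is open and commensurated, hence contains $T$ by Proposition~\ref{prop-commens-T}, hence $T\leq\bigcap_i U_iN=N$). Since Corollary~\ref{cor-intro-embed-T} is itself a consequence of Proposition~\ref{prop-commens-T}, the two routes are close cousins; yours outsources more to classical lattice theory, the paper's is more self-contained. Your treatments of $G^{\circ}$ and of $R$ being compact are both acceptable (the paper likewise invokes, without proof, that a finitely generated infinite simple group is not a lattice in a connected locally compact group).

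The genuine gap is in the topological simplicity of $H$, precisely where you say the conclusion ``should'' follow from monolith theory: the case of a proper closed normal $N\trianglelefteq H$ with $T\subseteq N$ and trivial $\bar{G}$-core is left unproved. In fact that case never arises, and the repair is one line from what you already established. Your dichotomy inside $H$ shows that \emph{every} non-trivial closed normal subgroup of $H$ contains $T$ (if it met $T$ trivially it would be compact, hence inside the trivial locally elliptic radical of $H$). Since $H\trianglelefteq\bar{G}$, each conjugate $gNg^{-1}$ is again a non-trivial closed normal subgroup of $H$ and therefore contains $T$; hence the $\bar{G}$-core of $N$ contains $T$ and is non-trivial, so by minimality of $H$ among non-trivial closed normal subgroups of $\bar{G}$ it equals $H$, forcing $N=H$. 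Equivalently (and this is the paper's phrasing): the intersection $K$ of all non-trivial closed normal subgroups of $H$ contains $T$, is a non-trivial closed characteristic subgroup of $H$, hence is normal in $\bar{G}$, hence $K=H$, so $H$ has no proper non-trivial closed normal subgroup. The detour through $\prod_{g}H/gNg^{-1}$ is unnecessary and, as written, does not constitute a proof.
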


\begin{proof}
We first show that the connected component $G^{\circ}$ is compact. Let $O\leq G$ be an open subgroup of $G$ such that $O/G^{\circ}$ is compact. The group $O$ is a commensurated open subgroup of $G$, since it is the preimage of a compact open subgroup under the projection $G\rightarrow G/G^{\circ}$. Suppose for contradiction that $O$ is not compact. That $T$ is a lattice implies $O\cap T$ is infinite, and furthermore, $O\cap T$ is a commensurated subgroup of $T$. Applying Proposition~\ref{prop-commens-T}, we deduce that $T\leq O$. The quotient $O/G^{\circ}$ is a profinite group, and since $T$ admits no finite quotients, we indeed have $T\leq G^{\circ}$. This is absurd, since finitely generated infinite simple groups are never lattices in connected locally compact groups. The connected component of $G$ is thus compact. By passing to $G/G^{\circ}$, we assume that $G$ is a \tdlc group.

Let $U\leq G$ be a compact open subgroup and let $N\normal G$ be a non-compact closed normal subgroup. Applying Lemma~\ref{lem:commensurated_and_normal}, the subgroup $UN$ is a commensurated subgroup of $G$. Proposition~\ref{prop-commens-T} thus implies that in fact $T\leq UN$. Letting $(U_i)_{i\in I}$ be a basis at $1$ of compact open subgroups, it follows that $T\leq \bigcap_{i\in I}U_iN=N$. The quotient $G/N$ is therefore a locally compact group which admits an invariant probability measure, and thus, it is a compact group. We deduce that every non-compact normal subgroup of $G$ is cocompact in $G$ and contains $T$. In particular, the locally elliptic radical $R\normal G$ is compact. Indeed, if $R$ is cocompact in $G$, then $G$ is locally elliptic via \cite{P_66}, so $T$ is a torsion group, which is absurd.

Setting $G' := G/R$, the group $G'$ has no non-trivial compact normal subgroups, and applying the previous paragraph, all non-trivial normal subgroups of $G'$ are cocompact in $G'$ and contain $T$. Consider $H$ the intersection of all non-trivial closed normal subgroups of $G'$. The group $H$ contains $T$ and is thus a non-trivial cocompact normal subgroup of $G'$. It remains to show that $H$ is topologically simple. Take $N$ a non-trivial closed normal subgroup of $H$. The subgroup $N$ cannot be compact, since otherwise $G'$ has a non-trivial locally elliptic radical. The above argument thus ensures that $N$ contains $T$ and is cocompact in $H$. The intersection $K$ of all non-trivial closed normal subgroups of $H$ is then a non-trivial characteristic subgroup of $H$. By our choice of $H$, we deduce that $K=H$, and a fortiori, $N=H$. That is to say, $H$ is topologically simple.
\end{proof}


\bibliographystyle{abbrv}
\bibliography{biblio1}

\end{document}